\documentclass{amsart}
\textwidth=125 mm
\textheight=195 mm
\usepackage[margin=1in]{geometry}
\usepackage{amsmath}
\usepackage{amssymb}
\usepackage{amsthm}
\usepackage{amscd}
\usepackage{enumerate}
\usepackage{xcolor}

\usepackage{graphicx}
\usepackage{amsmath,amsthm,amsfonts,amscd,amssymb,comment,eucal,latexsym,mathrsfs}
\usepackage{stmaryrd}
\usepackage[all]{xy}

\usepackage{epsfig}

\usepackage[all]{xy}
\xyoption{poly}
\usepackage{fancyhdr}
\usepackage{wrapfig}
\usepackage{epsfig}


\usepackage{hyperref}


\theoremstyle{plain}
\newtheorem{thm}{Theorem}[section]
\newtheorem{introthm}{Theorem}

\newtheorem{prop}[thm]{Proposition}
\newtheorem{lem}[thm]{Lemma}
\newtheorem{cor}[thm]{Corollary}

\theoremstyle{definition}
\newtheorem{defn}[thm]{Definition}
\theoremstyle{remark}
\newtheorem{remark}{Remark}

\newtheorem{notation}{Notation}




  \def\C{{\mathbb{C}}}          \def\M{{\mathbb{M}}} \def\N{{\mathbb{N}}}

 \def\cB{{\mathcal{B}}}     \def\cG{{\mathcal{G}}} \def\cH{{\mathcal{H}}}   \def\cK{{\mathcal{K}}} \def\cL{{\mathcal{L}}} \def\cM{{\mathcal{M}}} \def\cN{{\mathcal{N}}}     \def\cS{{\mathcal{S}}} \def\cT{{\mathcal{T}}}



\newcommand{\set}[1]{\left\{ #1\right\}}

	  \linespread{1.4}

\title{Quantum Chromatic Numbers of Products of Quantum Graphs}

\author{Rolando de Santiago}
\address{California State University, Long Beach, Long Beach CA 90840}
\email{rolando.desantiago@csulb.edu}
\urladdr{https://home.csulb.edu/~rolando.desantiago@csulb.edu/}

\author{A. Meenakshi McNamara}
\address{}
\email{m2mcnamara@uwaterloo.ca}
\urladdr{}

\begin{document}

\begin{abstract}
    In this article, we define the Cartesian, Categorical, and Lexicographic, and Strong products of quantum graphs.  We provide bounds on the quantum chromatic number of these products in terms of the quantum chromatic number of the factors. To adequately describe bounds on the lexicographic product of quantum graphs, we provide a notion of a quantum $b$-fold chromatic number for quantum graphs.    
\end{abstract}
\maketitle

\section*{Introduction and Statements of Results}
Graph colorings and chromatic numbers arise naturally as a measure of the complexity of a graph. Recently, quantum versions of chromatic numbers, framed as optimal strategies to win a non-local version of a graph coloring game, have risen to great prominence.  These, along with other non-synchronous non-local games have been the object of intense study in recent years due their significance in fields of mathematics and physics related to quantum information theory.   Non-local games are closely related to questions of Tsirelson's problem, while in operator algebras, the  Connes Embedding Problem and Kirchberg conjecture are known to be logically equivalent to Tsirelson's problem \cite{JungeAndFriendsConnesEmbedding,FrTsirelsonsAndKirchBerg, OzawaCEP}.  The recent result of Ji-Natarajan-Vidick-Wright-Yuen  using the framework of non-local games to show that MIP$^*$=RE  further illustrates the broad impacts of the theory of non-local games\cite{ji2022mipre}. 

Non-local games typically involve a pair of players working in tandem to provide correct answers to questions posed by a neutral referee.  Before play, the players may devise an agreed-upon strategy to correlate their responses; however, they are no longer allowed to communicate once the game begins.  The non-locality of this game, i.e.~the inability of the players to communicate, makes winning multiple rounds of the game challenging in general.  In cases where the players make use of some shared entangled resource, the players can drastically improve performance by correlating their behaviors.  For example, in graph coloring games quantum chromatic numbers of graphs  can be significantly smaller than  classical chromatic numbers of graphs (see \cite{OdditiesOQ, og_qchromatic_graph}).

We are primarily interested in extensions of the quantum graph coloring game defined in \cite{og_qchromatic_graph} to quantum/non-commutative graphs as were developed by \cite{GanesanHarrisQuantumToClassical}. Non-commuative graphs have garnered significant interest in quantum information theory as they arise as the analog of the confusability graph of a quantum channel \cite{noncomm_graph_Duan_2013}.  While several equivalent descriptions of quantum graphs exist in the literature (see for example, \cite{Priyanga} ), in this article we adopt the framework laid out in \cite{weaver_qgraphs_relations} where the aforementioned author considers quantum sets and quantum relations through the paradigm of von Neumann algebras. So far, only quantum graphs on $M_2$ have been fully classified \cite{Matsuda_2022}, so there is much more work to be done in this relatively young field. Further, as classical quantum graphs can be formulated in the language of quantum graphs, these studies further our understanding of the quantum coloring problem on classical graphs. This is of particular interest as \cite{universality_graph_homo_games} showed an equivalence between all synchronous non-local games and quantum coloring games.

Specifically, we concern ourselves with the behavior of the chromatic numbers of quantum graphs. In \cite{graph_products}, the authors define extensions of the Cartesian and categorical products, two of the four classical graph products, to non-commutative graphs (or more specifically submatricial systems) and describe the behavior of the quantum chromatic number for the resulting graph products.  We take their work as inspiration to extend the definition to the categorical and Cartesian products of quantum graphs. Furthermore, we provide a novel definition for the \emph{lexicographic} and \emph{strong} products of quantum graphs, thereby providing extensions of all four fundamental graph products, and proceed with an analysis of the chromatic number. 

As part of the analysis of the lexicographic product, we introduce a quantum version of the $b$-fold chromatic number.  The $b$-fold chromatic number arises naturally in the study of fractional colorings, which have been studied in the literature, see for example \cite{fractional_graphtheory_book}.  Quantum versions of fractional numbers such as tracial and projective ranks \cite{fractional}. In section \ref{section:b-fold}, we give an explicit definition of the $\chi_{b,q}$ for quantum graphs. 
In classical graph theory, it is known that the behavior of the chromatic number of a lexicographic product is intimately connected to the $b$-fold chromatic number. Specifically, for any graphs $G_1$, and $G_2$, $\chi(G_1[G_2])= \chi_{b}(G_1)$ where $b= \chi(G_2)$, see \cite[Theorem 3]{lex_prod_GELLER197587}.  We are able to prove a variation of this result for the lexigraphical product of quantum graphs. 

\begin{introthm}[{Theorem \ref{thm:lexicographicBoundedByProduct}}]\label{thm:Main_Lex_prod}
Let $\mathcal{G}$ and $\mathcal{H}$ be quantum graphs. Then $\chi_q(\mathcal{G}[\mathcal{H}])\leq \chi_{b,q}(\mathcal{G})$ where $b=\chi_q(\mathcal{H})$
\end{introthm}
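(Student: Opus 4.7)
The plan is to adapt the classical Geller--Stahl argument to the operator-algebraic setting. Set $n := \chi_{b,q}(\mathcal{G})$. Given a winning quantum strategy witnessing $\chi_{b,q}(\mathcal{G}) = n$ on some Hilbert space $\cH_1$ and a winning quantum $b$-coloring strategy for $\mathcal{H}$ on some Hilbert space $\cH_2$, I would combine them on the tensor product $\cH_1 \otimes \cH_2$: intuitively, the strategy for $\mathcal{H}$ is used to select which of the $b$ colors that the $b$-fold strategy assigns to a given vertex of $\mathcal{G}$ is to be used at a given vertex of $\mathcal{H}$.

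Concretely, encode the $b$-fold coloring by projections $\{P_{v,c,k}\}$, with $v$ a vertex of $\mathcal{G}$, $c \in [n]$, and $k \in [b]$ a slot index, satisfying $\sum_c P_{v,c,k} = I$ for each $(v,k)$, together with the $b$-fold coloring relations of Section \ref{section:b-fold}. Encode the quantum $b$-coloring of $\mathcal{H}$ by projections $\{Q_{w,k}\}_{k=1}^b$ with $\sum_k Q_{w,k} = I$ and $Q_{w_1,k} Q_{w_2,k} = 0$ whenever $w_1 \sim w_2$ in $\mathcal{H}$. Then I would set
\[
R_{(v,w),c} \;:=\; \sum_{k=1}^{b} P_{v,c,k} \otimes Q_{w,k}.
\]
Orthogonality of the $Q_{w,k}$'s across $k$, together with the PVM property of $\{P_{v,c,k}\}_c$ for each fixed $(v,k)$, should show that $\{R_{(v,w),c}\}_c$ is itself a PVM on $\cH_1 \otimes \cH_2$.

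The substance of the proof is then to verify the quantum coloring relations for $\mathcal{G}[\mathcal{H}]$, which split into two cases by the lexicographic adjacency rule. If $v_1 = v_2$ and $w_1 \sim w_2$ in $\mathcal{H}$, then $R_{(v,w_1),c} R_{(v,w_2),c}$ expands into a double sum over slots $k_1,k_2$; the $k_1 \neq k_2$ terms vanish by orthogonality of the $Q$-projections across slots, while the diagonal terms vanish because $Q_{w_1,k} Q_{w_2,k} = 0$. If instead $v_1 \sim v_2$ in $\mathcal{G}$, the annihilation must come from the cross-slot relation $P_{v_1,c,k_1} P_{v_2,c,k_2} = 0$ for every $(k_1,k_2)$, which is the quantum analogue of ``adjacent vertices receive disjoint $b$-color sets.''

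The main obstacle will be extracting this cross-slot annihilation from the definition of $\chi_{b,q}$ chosen in Section \ref{section:b-fold}. Classically the disjointness of color sets makes this automatic, but quantumly one must verify that the coloring relations are symmetric enough across slot indices to guarantee $P_{v_1,c,k_1} P_{v_2,c,k_2} = 0$ for all $k_1,k_2$ and not merely for $k_1=k_2$. If the definition is instead phrased in terms of projections indexed by $b$-subsets $S \subseteq [n]$ with $\sum_S P_{v,S} = I$, the same strategy still goes through after replacing the slot $k$ by the rank of $c$ within the ordered elements of $S$, in which case $P_{v_1,S_1} P_{v_2,S_2} = 0$ whenever $v_1 \sim v_2$ and $S_1 \cap S_2 \neq \emptyset$ delivers the needed orthogonality. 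A secondary point is to make sure these manipulations respect the operator-system formulation of the quantum lexicographic product $\mathcal{G}[\mathcal{H}]$ developed earlier in the paper, rather than merely a vertex-wise coloring condition.
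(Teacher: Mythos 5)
Your construction is, at bottom, the same as the paper's: use the coloring of $\cH$ to select which of the $b$ colors assigned by the $b$-fold coloring of $\cG$ gets used, with the ``rank of $c$ within the ordered elements of $S$'' reindexing you mention at the end being exactly the map $\psi(T,a)$ of Theorem \ref{thm:lexicographicBoundedByProduct}; after deleting vertex labels, your $R_{(v,w),c}=\sum_k P_{v,c,k}\otimes Q_{w,k}$ is the paper's $P^\cL_a=\sum_{T\ni a} Q_T\odot P^\cH_{\psi(T,a)}$. So the idea is right. The genuine gap is the point you set aside as ``secondary.'' The theorem is about Weaver-style quantum graphs $(\cS,\cM,\cB(H))$, which have no vertices: a $q$-coloring is a single family of projections in $\cM\bar\otimes\cN$ subject to $P_a(X\otimes I_\cN)P_a=0$ for every $X$ in the operator space, so vertex-indexed families $P_{v,c,k}$, $Q_{w,k}$ and a two-case adjacency analysis do not parse in this setting; the verification must be run against $\cS_{\cG[\cH]}=\cS_\cG\otimes\cB(H_\cH)+\cM_\cG'\otimes\cS_\cH$ (Definition \ref{lexic_product_def}).

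The first summand is indeed handled by your ``cross-slot annihilation'': in set-indexed form, $Q_S(X\otimes I)Q_T=0$ whenever $S\cap T\neq\emptyset$ (Theorem \ref{Thm:equivBFoldDefs}) kills those terms no matter what sits in the $\cB(H_\cH)$ leg. But the second summand has no counterpart in your sketch: for $S\neq T$ with $a\in S\cap T$ one must show $(Q_S\odot P^\cH_{\psi(S,a)})\bigl((D\otimes Y)\otimes I\bigr)(Q_T\odot P^\cH_{\psi(T,a)})=0$ for \emph{arbitrary} $D\in\cM_\cG'$, and this uses that $Q_S\in\cM_\cG\bar\otimes\cN_\cG$ commutes with $\cM_\cG'\otimes I$ together with $Q_SQ_T=0$; for $S=T$ it is the coloring of $\cH$ that kills the term (your ``diagonal'' case). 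In the vertex picture the $\cM_\cG'$ factor is just a diagonal matrix unit, which is why this step is invisible there, but for genuinely noncommutative $\cM_\cG$ it is precisely where the requirement that the $b$-fold projections lie in $\cM_\cG\bar\otimes\cN_\cG$ (Definition \ref{def:BFoldColoring}) gets used, and your argument as written never supplies it. One smaller slip: in your case $v_1=v_2$, $w_1\sim w_2$, the $k_1\neq k_2$ terms vanish because the same color occupies at most one slot at a fixed vertex (orthogonality on the $\cG$ side), not ``by orthogonality of the $Q$-projections across slots'' --- $Q_{w_1,k_1}Q_{w_2,k_2}$ need not vanish when $w_1\neq w_2$ and $k_1\neq k_2$.
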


We strongly suspect that the inequality in Theorem \ref{thm:Main_Lex_prod} should be an equality, thereby emulating the classical setting. However, at the present time
 we are unable to verify this and leave it as an open question.

In section \ref{sec:BoundsForCategoricalAndCartesianProducts}, we turn our attention to the Cartesian and categorical products of quantum graphs.  Variations of   {H}edetniemi's conjecture and Sabidussi's Theorem have been studied in the context of quantum chromatic numbers for graphs, see \cite{SabidussiChromaticServini} and \cite{graph_products}.
We obtain our results by closely emulating the work of Kim and Mehta for a special case of noncommutative graphs, namely submatricial systems, to define Categorical and Cartesian products of quantum graphs
(see Section \ref{sec:Prelim}).  Firstly, we examine Sabidussi's Theorem for the categorical product of quantum graphs.
\begin{introthm}[{Theorem {\ref{thm:BoundCartesianProduct}}}]\label{thm:Main_Cat_Prod}
Let $\mathcal{G}$ and $\mathcal{H}$ be quantum graphs. $$\max\set{\chi_{q}(\mathcal{G}), \chi_{q}(\mathcal{H})} \leq \chi_{q}(\mathcal{G}\Box \mathcal{H})$$
\end{introthm}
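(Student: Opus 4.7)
The plan is to establish the two inequalities $\chi_q(\mathcal{G})\leq \chi_q(\mathcal{G}\Box\mathcal{H})$ and $\chi_q(\mathcal{H})\leq \chi_q(\mathcal{G}\Box\mathcal{H})$ separately, then take the maximum. Because the Cartesian product construction is symmetric in its two arguments, it suffices to prove the first inequality; the second then follows by swapping the roles of $\mathcal{G}$ and $\mathcal{H}$. This mirrors the classical proof, in which the bound is obtained by exhibiting each factor as a subgraph of the product and invoking monotonicity of the chromatic number under subgraph inclusion.

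First, set $c=\chi_q(\mathcal{G}\Box\mathcal{H})$ and fix a quantum $c$-coloring of $\mathcal{G}\Box\mathcal{H}$, which in the formalism recalled in Section \ref{sec:Prelim} amounts to a family of projections indexed by the $c$ colors, living in some appropriate operator algebra over the vertex algebra of $\mathcal{G}\Box\mathcal{H}$, which sum to the identity (the PVM condition) and are orthogonal to the adjacency operator $A_{\mathcal{G}\Box\mathcal{H}}$ of the product. The next step is to construct from this data a quantum $c$-coloring of $\mathcal{G}$. The definition of the Cartesian product in Section \ref{sec:Prelim} (following the Kim--Mehta framework for submatricial systems and its extension to quantum graphs) is arranged so that the adjacency decomposes schematically as $A_{\mathcal{G}\Box\mathcal{H}} = A_{\mathcal{G}}\otimes I_{\mathcal{H}} \;+\; I_{\mathcal{G}}\otimes A_{\mathcal{H}}$, so the $\mathcal{G}$-adjacency sits inside $A_{\mathcal{G}\Box\mathcal{H}}$ as a direct summand.

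Concretely, I would produce the induced coloring by compressing the product coloring to a one-dimensional "slice" of the $\mathcal{H}$-algebra: choose a minimal projection (or rank-one matrix unit) $p$ in the $\mathcal{H}$ vertex algebra and replace each coloring projection $P_i$ of $\mathcal{G}\Box\mathcal{H}$ by its cut-down by $I\otimes p$, yielding a family that effectively lives on the $\mathcal{G}$-algebra. Completeness $\sum_i P_i = 1$ descends to completeness of the cut-down family on the slice, and orthogonality of the $P_i$'s against $A_{\mathcal{G}\Box\mathcal{H}}$ restricts to orthogonality against the $A_{\mathcal{G}}\otimes I_{\mathcal{H}}$ summand, which is precisely the adjacency condition for $\mathcal{G}$. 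As an alternative one could use the trace-preserving conditional expectation $\id\otimes\tau_{\mathcal{H}}$, tweaking so that the output remains a family of projections.

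The main obstacle is verifying that this slicing procedure genuinely produces a valid quantum $c$-coloring of $\mathcal{G}$ in the precise sense adopted in the paper, rather than merely a formal analog: one must check that the PVM condition and the orthogonality with $A_{\mathcal{G}}$ survive the compression, that the ambient algebra into which the coloring lands is of the type required (tracial von Neumann algebra, or whichever class is used), and that no degeneracy arises when the chosen projection $p$ interacts with the specific conventions of the Cartesian product defined in Section \ref{sec:Prelim}. Once this compression-gives-a-coloring lemma is established, the inequality $\chi_q(\mathcal{G})\leq c$ is immediate, the symmetric argument delivers $\chi_q(\mathcal{H})\leq c$, and the stated max bound follows.
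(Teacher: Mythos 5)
There is a genuine gap, and it sits exactly at the step you flagged as the ``main obstacle.'' Your plan is to cut a coloring $\{P_a\}\subseteq \cM_\cG\bar\otimes\cM_\cH\bar\otimes\cN$ of $\cG\Box\cH$ down by $I_\cG\otimes p\otimes I_\cN$ with $p$ a minimal projection (or matrix unit) of the $\cH$ vertex algebra $\cM_\cH$. Two things break in the noncommutative setting. First, $P_a$ need not commute with $I_\cG\otimes p\otimes I_\cN$, so the compressions $(I\otimes p\otimes I)P_a(I\otimes p\otimes I)$ are positive contractions, not projections: the PVM condition does not descend, and replacing the cut-down by the conditional expectation $\id\otimes\tau_\cH\otimes\id$ destroys projectivity just as badly. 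Second, the coloring condition for the product only controls $P_a(X\otimes I_\cN)P_a$ for $X\in\cS_\cK=\cS_\cG\otimes\cM_\cH'+\cM_\cG'\otimes\cS_\cH$; to transfer orthogonality you would need $X_\cG\otimes p\in\cS_\cK$, i.e.\ $p\in\cM_\cH'$, but a minimal projection of $\cM_\cH$ lies in $\cM_\cH'$ only in the abelian (classical) case --- which is precisely why the ``restrict to a fiber'' argument works classically and does not transport to quantum graphs. Note also that the quantum graphs here carry no adjacency operator $A_{\cG\Box\cH}$; the edge data is the operator space $\cS_\cK$, and the decomposition you invoke is the bimodule sum above, not a sum of adjacency matrices.

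The repair is to avoid compressing by anything in $\cM_\cH$ at all. Either take the cutting projection in the commutant --- most simply $p=I_\cH\in\cM_\cH'$, i.e.\ regard the given $P_a$ as projections in $\cM_\cG\bar\otimes(\cM_\cH\bar\otimes\cN)$ and observe that $X_\cG\otimes I_\cH\in\cS_\cG\otimes\cM_\cH'\subseteq\cS_\cK$ already yields $P_a(X_\cG\otimes I_{\cM_\cH\bar\otimes\cN})P_a=0$ --- keeping in mind that the enlarged auxiliary algebra $\cM_\cH\bar\otimes\cN$ must still be of the type required by $t$ (finite-dimensional for $t=q$, etc.), or argue as the paper does, in the opposite direction: exhibit a local quantum graph homomorphism $\cG\to_{loc}\cG\Box\cH$ via the CPTP map $X\otimes\lambda\mapsto\lambda X\otimes I_\cH$, check $\cS_\cG\otimes I_\cH\subseteq\cS_\cG\otimes\cM_\cH'$ and $\cM_\cG'\otimes I_\cH\subseteq\cM_{\cG\Box\cH}'$, and then invoke monotonicity of $\chi_t$ under $t$-homomorphisms (Corollary \ref{cor:q_color_transitive}). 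That route gives the bound for all $t$ without ever having to turn a coloring of the product into a coloring of the factor by hand; your symmetry reduction to one inequality is fine and is shared by the paper.
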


We also are able to prove the following version of Hedetniemi's conjecture for quantum graphs.
\begin{introthm}[{Theorem \ref{thm:BoundCategoricalProduct}}]\label{thm:Main_Cartesian_Prod}
Let $\mathcal{G}$ and $\mathcal{H}$ be quantum graphs.
$$\chi_q(\mathcal{G}\times\mathcal{H}) \leq \min\{\chi_q(\mathcal{G}), \chi_q(\mathcal{H})\}$$

\end{introthm}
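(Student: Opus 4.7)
The plan is to pull back a quantum coloring of a single factor to produce a quantum coloring of the categorical product. Since the definition of $\mathcal{G}\times\mathcal{H}$ given in Section \ref{sec:Prelim} is symmetric in the two arguments, it suffices to prove $\chi_q(\mathcal{G}\times\mathcal{H})\leq \chi_q(\mathcal{G})$; the bound by $\chi_q(\mathcal{H})$ follows by interchanging the roles of the factors, and the two inequalities together yield the minimum. This mirrors the elementary classical argument: if $c$ is a proper coloring of $G$, then $(g,h)\mapsto c(g)$ is a proper coloring of $G\times H$, because adjacency in the categorical product forces adjacency of first coordinates in $G$.

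Concretely, let $n=\chi_q(\mathcal{G})$, and fix a winning quantum strategy for the $n$-coloring game on $\mathcal{G}$, given by projections $\{P_i\}_{i=1}^{n}$ on an auxiliary Hilbert space $\mathcal{K}$ satisfying the completeness and adjacency-orthogonality relations that characterize a quantum coloring of $\mathcal{G}$ in the sense adopted in the paper. First I would define a candidate $n$-coloring of $\mathcal{G}\times\mathcal{H}$ by taking the same projections and letting them act trivially on the ancillary factor coming from $\mathcal{H}$, i.e.\ by replacing each $P_i$ with $P_i\otimes I$. The completeness relation $\sum_i P_i\otimes I = I\otimes I$ then passes through automatically from the completeness for the $P_i$'s.

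The main content, and the step requiring care, is verifying the adjacency-orthogonality condition on the product. This is where the precise definition of the categorical product of quantum graphs must be invoked: one needs that the quantum adjacency operator $A_{\mathcal{G}\times\mathcal{H}}$ is majorized (in the appropriate sense for quantum graphs, coming from the tensor factorization used in the construction) by something of the form $A_{\mathcal{G}}\otimes (\text{something on }\mathcal{H})$. Once this is established, the orthogonality relation $A_{\mathcal{G}}(P_i\otimes P_i)=0$ satisfied by the coloring of $\mathcal{G}$ lifts to the orthogonality relation for the product's adjacency operator after tensoring with the identity on the $\mathcal{H}$-side.

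The main obstacle I anticipate is handling this lifting of orthogonality in the non-commutative setting cleanly, since quantum graphs do not have vertices one can point to and the adjacency data is encoded in an operator. In particular, one must use the categorical product's definition in a way that ensures the adjacency projection (or its quantum analog) of $\mathcal{G}\times\mathcal{H}$ interacts well with extensions of projections by identity on the $\mathcal{H}$-factor. Once that compatibility is in place, the argument reduces to a short algebraic manipulation, and the conclusion $\chi_q(\mathcal{G}\times\mathcal{H})\leq n = \chi_q(\mathcal{G})$ is immediate.
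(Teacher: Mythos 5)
Your overall strategy is correct, but it is a genuinely different route from the paper's. The paper proves Theorem \ref{thm:BoundCategoricalProduct} by exhibiting a local quantum graph homomorphism $\cG\times\cH\to_{loc}\cG$ (the partial trace over the $\cH$ leg, with Kraus operators $I_\cG\otimes e_j^*$) and then invoking monotonicity of $\chi_t$ under homomorphisms (Corollary \ref{cor:q_color_transitive}); the symmetric argument handles $\cH$. You instead transport the coloring directly: given a $q$-$c$-coloring $\{P_i\}_{i=1}^c\subseteq\cM_\cG\bar\otimes\cN$ of $\cG$, you pass to $\widetilde P_i = P_i\odot I_{\cM_\cH}\in\cM_\cG\bar\otimes\cM_\cH\bar\otimes\cN=\cM_{\cG\times\cH}\bar\otimes\cN$, where you should use the permuted tensor product of Notation \ref{notaiton:Permulted} to put the legs in the right order. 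This is more elementary, avoids the homomorphism machinery, keeps the same auxiliary algebra $\cN$ (so it works verbatim for all $t\in\{loc,q,qa,qc\}$), whereas the paper's channel $\cG\times\cH\to_{loc}\cG$ is reusable elsewhere (it reappears in the strong-product analysis). One correction to how you frame the ``main obstacle'': in the framework adopted here there is no quantum adjacency operator to majorize, and the relation you quote, $A_\cG(P_i\otimes P_i)=0$, is not the coloring condition used in this paper. A coloring satisfies $P_i(X\otimes I_\cN)P_i=0$ for all $X\in\cS_\cG$, and by Definition \ref{categor_prod_def} the product's operator space is $\cS_{\cG\times\cH}=\cS_\cG\otimes\cS_\cH$, spanned by elementary tensors; so the verification you left open is one line,
\[
\widetilde P_i\,\bigl((X_\cG\otimes X_\cH)\otimes I_\cN\bigr)\,\widetilde P_i
=\bigl(P_i(X_\cG\otimes I_\cN)P_i\bigr)\odot X_\cH=0,
\]
with linearity handling general elements of $\cS_\cG\otimes\cS_\cH$. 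Together with the completeness $\sum_i\widetilde P_i=I$, which you already noted, this gives $\chi_t(\cG\times\cH)\leq\chi_t(\cG)$, and interchanging the factors gives the bound by $\chi_t(\cH)$, as claimed.
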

Lastly, we prove a standard bound for the strong product of quantum graphs. 
\begin{introthm}
    Let $\mathcal{G}$ and $\mathcal{H}$ be quantum graphs.  
    \[\max\{\chi_{q}(\mathcal{G}), \chi_q(\mathcal{H})\}\leq \chi_q(\mathcal{G}\boxtimes\mathcal{H})\leq \chi_q(\mathcal{G})\chi_q(\mathcal{H}) \]
\end{introthm}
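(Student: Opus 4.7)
The plan is to prove the two inequalities independently: the lower bound will follow from Theorem \ref{thm:Main_Cat_Prod} together with a comparison of the strong and Cartesian products, while the upper bound will be established by tensoring winning strategies for the factors.

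For the lower bound $\max\{\chi_q(\mathcal{G}), \chi_q(\mathcal{H})\} \leq \chi_q(\mathcal{G}\boxtimes\mathcal{H})$, I would first observe that the Cartesian product $\mathcal{G}\Box\mathcal{H}$ embeds as a sub-quantum-graph of the strong product: classically, the edge set of $\mathcal{G}\boxtimes\mathcal{H}$ decomposes as the union of the edges of $\mathcal{G}\Box\mathcal{H}$ and those of $\mathcal{G}\times\mathcal{H}$, and in the von Neumann algebraic formulation this should manifest as a majorization of the corresponding edge projections. Consequently any winning quantum coloring of $\mathcal{G}\boxtimes\mathcal{H}$ automatically satisfies the (weaker) orthogonality constraints imposed by the Cartesian product, so $\chi_q(\mathcal{G}\Box\mathcal{H}) \leq \chi_q(\mathcal{G}\boxtimes\mathcal{H})$. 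Combining this with Theorem \ref{thm:Main_Cat_Prod} finishes the lower bound.

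For the upper bound $\chi_q(\mathcal{G}\boxtimes\mathcal{H}) \leq \chi_q(\mathcal{G})\chi_q(\mathcal{H})$, I would build a product strategy. Fix winning coloring strategies $\{P^x_a\}$ for $\mathcal{G}$ with $c_G=\chi_q(\mathcal{G})$ colors on a Hilbert space $\mathcal{K}_\mathcal{G}$ and $\{R^y_b\}$ for $\mathcal{H}$ with $c_H=\chi_q(\mathcal{H})$ colors on $\mathcal{K}_\mathcal{H}$, and set $Q^{(x,y)}_{(a,b)} := P^x_a \otimes R^y_b$ on $\mathcal{K}_\mathcal{G}\otimes\mathcal{K}_\mathcal{H}$; this uses $c_G c_H$ colors indexed by pairs $(a,b)$. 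Completeness factors as a product of partial sums, and orthogonality on each of the three edge types of the strong product follows because in every case at least one tensor factor of $P^{x_1}_{a_1}P^{y_1}_{a_1}\otimes R^{x_2}_{a_2}R^{y_2}_{a_2}$ vanishes by the coloring axioms on the respective component.

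The main obstacle will be making these classical vertex-level intuitions rigorous in Weaver's noncommutative framework, where vertices are not points and edges are encoded via operator systems or projections. In particular, I would need to confirm that $\mathcal{G}\boxtimes\mathcal{H}$ as defined in Section \ref{sec:Prelim} really does decompose algebraically as the join of its Cartesian and categorical parts (so that the lower-bound comparison argument is available), and that the tensor-product strategy $Q^{(x,y)}_{(a,b)} = P^x_a \otimes R^y_b$ satisfies every operator-algebraic requirement of a quantum coloring, not merely the pointwise orthogonality checks.
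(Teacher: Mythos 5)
Your proposal is correct and follows essentially the same route as the paper: the lower bound comes from comparing $\mathcal{G}\Box\mathcal{H}$ with $\mathcal{G}\boxtimes\mathcal{H}$ (the paper phrases your ``weaker constraints'' observation as the identity map giving a local quantum graph homomorphism $\mathcal{G}\Box\mathcal{H}\to_{loc}\mathcal{G}\boxtimes\mathcal{H}$, then invokes Sabidussi's theorem and transitivity), and the upper bound is exactly the paper's tensor-product strategy $P_a^{\mathcal{G}}\odot P_b^{\mathcal{H}}$, checked against the three summands of $\cS_{\mathcal{G}\boxtimes\mathcal{H}}$. The only adjustment needed is the one you already flag: drop the vertex superscripts and work with the projections $\{P_a\}\subseteq\cM\bar\otimes\cN$ of Weaver-style colorings, using the permuted tensor product to collect the auxiliary algebras.
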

We remark that the results above hold for the quantum approximate and quantum commuting chromatic numbers as well.

The organization of our paper is as follows: in Section \ref{sec:Prelim}, we give a brief description of the notion of quantum graphs in the sense of \cite{weaver_qgraphs_relations}, and an equivalent definition quantum graph colorings. Section \ref{sec:GraphProducts} provides definitions of the Cartesian, categorical, lexicographic, and strong graph products. Furthermore, we verify that these indeed do produce quantum graphs in the sense of Section \ref{sec:Prelim}. Section \ref{section:b-fold} is devoted to establishing several fundamental properties for our definition of the local, quantum, quantum approximate, and quantum commuting $b$-fold chromatic number. In particular, we provide various equivalent descriptions $\chi_{q,t}$, and prove that $\chi_{q,t}$ behaves similarly to the classical version. Finally, Section
 \ref{sec:lexicographicBound} gives the proofs of the main results.

\subsection*{Acknowledgements}The authors would like to thank Priyanga Ganesan and Thomas Sinclair  for their patience and thoughtful conversations  with this project.  

\tableofcontents

\section{Preliminaries and Definitions}\label{sec:Prelim}
For our purposes, all classical graphs considered here will be finite, simple, undirected graphs; that is, we will only consider finite vertex set,  and an irreflexive, symmetric relation on the vertices.  $G=(V(G),E(G))$ will denote a graph, with $V(G)$ its vertex set, and $E(G)$ its edge set.  Whenever $(v,w)\in E(G)$, we will write $v\sim w$ in $G$.  When it is clear from context, we will write $G=(V,E)$ in place of $G=(V(G),E(G))$.

$E_{i,j}\in M_{n}(\C)$ will be the elementary matrix with a 1 in the $i,j$-th entry and zero elsewhere.  Unless specified, $H$  will denote a (complex) Hilbert space and $\cB(H)$ denotes the bounded linear operators on ${H}$. Fixing a non-empty subset $\mathcal{S}\subseteq \cB(H)$, $\cS':=\set{Y\in \cB(H): YS=SY \quad \forall \, S\in \mathcal{S} }$ will denote the commutant of $\cS$.  A von Neumann algebra $\mathcal{M}$ is a unital $*$-closed subset with $\cM''=\cM$.  A von Neumann algebra $\cM\subseteq \mathcal{B}(H) $ where ${H}$ is separable is called finite if there exists a normal tracial faithful state $\tau:\cM\to \C$\footnote{More generally, $M$ is called finite if whenever $p\in M$ is a projection such that there exists $v\in M$ so that $v^*v=p$ and $vv^*=1$, it follows that $p=1$.}.  Finally, we say that a finite von Neumann algebra $\mathcal{M}$  embeds into $\mathcal{R}^\omega$ if there exists an injective,  trace preserving $*$-homomorphism from $\mathcal{M}$ into some ultrapower of the hyperfinite II$_1$ factor $\mathcal{R}$. For details on ultrapowers of von Neumann algebras and the the hyperfinite II$_1$ factor, see \cite{OzawaCEP}.

There are a litany of definitions of quantum graphs (see \cite{firstQuantumGraph, noncomm_graph_Duan_2013, weaver_qgraphs_relations, comp_q_func_Musto_2018, bigalois_graphIso_2019},), and a robust analysis of their equivalences can be found in \cite{Priyanga}.   
 We adopt the following description of quantum graphs  given by Weaver (see \cite{weaver_qgraphs_relations} ) as it is amenable to the study of quantum coloring problems. 
An (irreflexive) \textbf{quantum graph} $\mathcal{G}$ is a tuple $(\cS, \mathcal{M}, \cB(H))$ where: $\mathcal{M}$ is a von Neumann subalgebra of $\cB(H)$; $\mathcal{S}\subseteq \cB(H)$ is an operator space that is closed under adjoint, is a bimodule over $\mathcal{M}'$, and is contained in $(\mathcal{M}')^\perp$ (relative to some trace on $\cB(H)$).
Note that an irreflexive quantum graph on $M_n(\C^n)$ is a self-adjoint, traceless operator subspace of $M_n(\C)$, a definition of a non-commutative graph frequently encountered in the literature (e.g. \cite{Stahlke_2016_noncomm_graphs}). In \cite{stahlke}, Stahlke introduced a graph coloring game that takes advantage of quantum entanglement.  Brannan, Ganesan, and Harris demonstrate that the quantum chromatic number of a graph can be determined entirely in terms of projection valued measures, which allows for the extension of quantum colorings to quantum graphs in the sense above. 

Given a quantum graph $\mathcal{G} = (\mathcal{S}, \mathcal{M}, \cB(H))$ and $t\in \{loc, q, qa, qc\}$, there is a \textbf{$t$ c-coloring} of $\mathcal{G}$ if there exists a finite von Neumann algebra $\mathcal{N}$ and $\{P_a\}_{a=1}^c\subseteq \mathcal{M}\bar\otimes\mathcal{N}$ such that
          \begin{enumerate}
        \item $\displaystyle P_a^2 = P_a = P_a^*$ for $1\leq a\leq c$
        \item $\displaystyle \sum_{i=1}^cP_a = I_{\mathcal{M}\bar{\otimes}\mathcal{N}}$
        
        \item $\displaystyle P_a(X\otimes I_{\mathcal{N}})P_a = 0, \forall X\in \mathcal{S}$ and $1\leq a\leq c$,
\end{enumerate}
and  where  $\dim \mathcal{N} = 1$ if $t = loc$,  $\dim\mathcal{N}<\infty$ if $t=q$,  $\mathcal{N}$ embeds into $\mathcal{R}^\omega$ if $t=qa$ if, or $\mathcal{N}$ is finite if $t = qc$. $\chi_t(\mathcal{G}) $ is defined as  the minimum $c\in \N$ such that there exists a $t$-$c$-coloring  of $\mathcal{G}$, when one exists, or infinity otherwise. $\chi_{loc}(\mathcal{G})$, $\chi_{q}(\mathcal{G})$, $\chi_{qa}(\mathcal{G})$, $\chi_{qc}(\mathcal{G})$ are respectively called the \textbf{local}, \textbf{quantum}, \textbf{quantum approximate}, \textbf{quantum commuting} \textbf{chromatic numbers of $\mathcal{G}$}, respectively. When $\mathcal{G}$ arises from a classical graph,   these can be phrased in terms of correlation sets, see \cite{fractional}
and \cite{GanesanHarrisQuantumToClassical}. In this formulation, we have that the quantum approximate colorings come from the closure of the correlation sets for the quantum strategies.


As an example, a classical graph  $G=(V,E)$ can be expressed as a quantum graph. Specifically, if  $G$ is a graph and   $V=\set{v_1,\ldots, v_n}$, letting 
$\mathcal{S}_\cG :=\operatorname{span}\set{E_{i,j}\in M_{n} : v_i\sim v_j \text{ in }G  }$,  $D_n\subseteq \cB(\C^n)=M_{n}(\C)  $ be the set diagonal matrices, and
 $\mathcal{G}= (\mathcal{S}_\cG , D_n, \cB(\C^n))$, one can check that $\mathcal{G}$ is a quantum graph as described above.  It is well-known that if $G$ and $H$ are graphs, then an isomorphism of operators spaces $\cS_\cG $ and $\cS_\cH$ corresponds to an isomorphism of graphs  \cite{isomorphism_classical}.  It is also the case that any quantum graph $\mathcal{G}= (\mathcal{S}, D_n, M_n(\C)))$ has the property that $\mathcal{S}=\mathcal{S}_\cG $ for some classical graph $G$. Hence, all ``abelian'' quantum graphs are classical graphs.  Thus, a ``purely quantum'' graph $\mathcal{G}= (\mathcal{S},\mathcal{M},\cB(H))$ arises when $\mathcal{M}$ is genuinely non-commutative.
 We call $(\cS_\cG , D_n,M_{n}(\C)$ the \textbf{quantum graph associated to a classical graph}. When $\mathcal{G}$ arises from a classical graph $G$ in the sense above,  $\chi_{loc}(\mathcal{G})= \chi(G)$ and in general $\chi_t(\cG) = \chi_t(G)$ where $t$ determines the type of nonlocal game.  Moreover, when $\mathcal{G}=(\mathcal{S}, M, \cB(H))$ is such that ${H}$ is finite dimensional,  $\chi_t(\mathcal{G})<\infty$ for all $t\in \set{q,qa,qc}$. Furthermore, if $\cG$ is a complete quantum graph then $\chi_{loc}(\mathcal{G})$ will be finite if and only if $M $ is abelian; i.e.~$\mathcal{G}$ corresponds to a classical graph. 

The notion of quantum graph homomorphisms was introduced around the same time as quantum graphs, for instance in \cite{weaver_qgraphs_relations,Stahlke_2016_noncomm_graphs}. We are interested in $t$-quantum graph homomorphisms, i.e. non-local game versions of a quantum graph homomorphism. In \cite{quantum_graph_homo_op_sys} this was explored for classical graphs in their quantum graph representation. In \cite{GanesanHarrisQuantumToClassical}, non-local games for homomorphisms between quantum and classical graphs were defined by determining a way to include quantum inputs to the game. These definitions can combine to give a non-local game version of quantum graph homomorphisms as follows:
    There  \textbf{quantum graph homomorphism of type} $t\in \{loc, q, qa, qc\}$ from $\mathcal{G}$ to $\mathcal{H}$ (denoted by $\mathcal{G}\to_t\mathcal{H}$) if there is a finite von Neumann algebra $\cN$ corresponding to $t$ and a completely positive trace preserving (CPTP) map $\Phi: \mathcal{M}_\cG \otimes \mathcal{N}_* \to \cM_\cH$ of the form $\Phi(\cdot) = \sum_{i=1}^m F_i(\cdot)F_i^*$ such that 
    $F_i(\cS_\cG \otimes I_\cN)F_j^*\subset \cS_\cH$ and  $F_i(\cM_\cG '\otimes I_\cN)F_j^*\subset \cM_\cH'$ for all $i,j$.

\section{Graphs Products}\label{sec:GraphProducts}

Graph products are binary operations on graphs $G=(V(G),E(G))$ and $H=(V(H),E(H))$ such that the graph product has vertex set $V(G)\times V(H) $; there are however, a number of ways one may reasonably define the edge relations.  For an in-depth discussion on graph products, we direct the interested reader to \cite{HandbookGraphProducts}.   Our approach is inspired by Kim and Mehta's definition of the Cartesian and categorical graph products for submatricial systems \cite{graph_products}.
 We will also define the lexicographic and strong products of quantum graphs, which to our knowledge have not been investigated.

\subsection{Cartesian Graph Product}
 
For classical graphs $G$ and $H$, the \textit{Cartesian product} is the graph $G\square H$ with vertex set $V(G)\times V(H)$ and edge relation given by $(v,a)\sim (w,b)$ if and only if either $v\sim w$ in $G$ and $a=b$, or $v=w$ and  $a\sim b$ in $H$. 

\begin{defn}
[{c.f. \cite[Definition 37]{graph_products}}]\label{cartesion_prod_def}
Suppose $\mathcal{G} = (\cS_\cG,\cM_\cG, \cB(H_\cG))$ and $\mathcal{H} = (\cS_\cG, \cM_\cH, \cB(H_\cH))$ are quantum graphs. The \textbf{Cartesian product} of $\mathcal{G}$ and $\mathcal{H}$ is the quantum graph defined by 
$$\mathcal{K} = \mathcal{G}\square\mathcal{H} = (\cS_\cK, \cM_\cK, \cB(H_\cK))$$
where $\cM_\cK = \cM_\cG \bar{\otimes}\cM_\cH$, $H_\cK = H_\cG\bar{\otimes} H_\cH$ and 
$$\cS_\cK = \cS_\cG\otimes \cM_\cH'+ \cM_\cG'\otimes \cS_\cH.$$
\end{defn}


We now verify that the above graph product does indeed produce a quantum graph.
\begin{prop}\label{cartesian_get_graph_proof}For any irreflexive quantum graphs,  $\mathcal{G}$ and $\mathcal{H}$, the Cartesian product  $\mathcal{G}\square \mathcal{H}  $ is an irreflexive quantum graph.
\end{prop}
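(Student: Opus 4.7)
The plan is to verify directly that $\mathcal{K} = \mathcal{G}\square \mathcal{H}$ satisfies each clause of the definition of an irreflexive quantum graph: (i) $\mathcal{M}_\mathcal{K}$ is a von Neumann subalgebra of $\mathcal{B}(H_\mathcal{K})$, (ii) $\mathcal{S}_\mathcal{K}$ is an operator space closed under the adjoint, (iii) $\mathcal{S}_\mathcal{K}$ is a bimodule over $\mathcal{M}_\mathcal{K}'$, and (iv) $\mathcal{S}_\mathcal{K} \subseteq (\mathcal{M}_\mathcal{K}')^\perp$ with respect to some trace. The organizing fact throughout is the commutation theorem for von Neumann tensor products, namely $\mathcal{M}_\mathcal{K}' = (\mathcal{M}_\mathcal{G}\bar\otimes \mathcal{M}_\mathcal{H})' = \mathcal{M}_\mathcal{G}'\bar\otimes \mathcal{M}_\mathcal{H}'$, which identifies the commutant appearing in (iii) and (iv) with the algebra whose factors already act on each of the summands of $\mathcal{S}_\mathcal{K}$.

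For (i), I invoke the standard fact that the spatial/von Neumann tensor product of two von Neumann subalgebras is again a von Neumann subalgebra. For (ii), each of $\mathcal{S}_\mathcal{G}\otimes \mathcal{M}_\mathcal{H}'$ and $\mathcal{M}_\mathcal{G}'\otimes \mathcal{S}_\mathcal{H}$ is an operator space, and self-adjointness follows from the fact that $\mathcal{S}_\mathcal{G}, \mathcal{S}_\mathcal{H}$ are self-adjoint by hypothesis while $\mathcal{M}_\mathcal{G}', \mathcal{M}_\mathcal{H}'$ are $*$-closed, so each summand (and hence their sum) is closed under the adjoint.

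For (iii), using the commutation identity it suffices to check on elementary tensors that, for $A', C'\in \mathcal{M}_\mathcal{G}'$, $B', D', T\in \mathcal{M}_\mathcal{H}'$ and $S\in \mathcal{S}_\mathcal{G}$,
\[
(A'\otimes B')(S\otimes T)(C'\otimes D') \;=\; (A'SC')\otimes (B'TD') \;\in\; \mathcal{S}_\mathcal{G}\otimes \mathcal{M}_\mathcal{H}',
\]
which combines the $\mathcal{M}_\mathcal{G}'$-bimodule property of $\mathcal{S}_\mathcal{G}$ with the algebra property of $\mathcal{M}_\mathcal{H}'$; the symmetric computation handles the second summand. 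Extending by linearity and then by normality/weak-$*$ continuity of the module action handles general elements of $\mathcal{M}_\mathcal{K}'$. For (iv), pick traces $\tau_\mathcal{G}, \tau_\mathcal{H}$ witnessing perpendicularity for $\mathcal{G}$ and $\mathcal{H}$, and set $\tau_\mathcal{K} = \tau_\mathcal{G}\otimes \tau_\mathcal{H}$. On elementary tensors,
\[
\tau_\mathcal{K}\bigl((A'\otimes B')^*(S\otimes T)\bigr) \;=\; \tau_\mathcal{G}(A'^*S)\,\tau_\mathcal{H}(B'^*T) \;=\; 0
\]
because the first factor vanishes by the hypothesis $\mathcal{S}_\mathcal{G}\subseteq (\mathcal{M}_\mathcal{G}')^\perp$; the analogous calculation disposes of $\mathcal{M}_\mathcal{G}'\otimes \mathcal{S}_\mathcal{H}$.

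The calculations on elementary tensors are routine; the main point requiring care is the passage from algebraic tensors to the full spatial/von Neumann tensor products in (iii) and (iv), which is where normality of the trace and of the module action (together with the commutation theorem) gets used. Once that is handled, all four conditions follow, and because each summand in $\mathcal{S}_\mathcal{K}$ already lies in $(\mathcal{M}_\mathcal{K}')^\perp$, $\mathcal{K}$ is automatically irreflexive.
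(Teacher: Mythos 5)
Your proposal is correct and follows essentially the same route as the paper: verify the four defining conditions, use the commutation theorem $(\mathcal{M}_\mathcal{G}\bar\otimes\mathcal{M}_\mathcal{H})'=\mathcal{M}_\mathcal{G}'\bar\otimes\mathcal{M}_\mathcal{H}'$, check the bimodule and orthogonality conditions on elementary tensors of $\mathcal{M}_\mathcal{G}'\otimes\mathcal{M}_\mathcal{H}'$ (splitting the trace as a product of the factor traces), and then pass to all of $\mathcal{M}_\mathcal{K}'$ since these tensors generate the commutant. The only cosmetic difference is that you make the density/normality step explicit, where the paper simply appeals to the elementary tensors being a generating subset.
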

\begin{proof}
Let us denote $\mathcal{K}=\mathcal{G}\square \mathcal{H} = (\cS_\cK,
\cM_\cK, \cB(H_\cK))$ where $\cS_\cK, \cM_\cK, $ and $\cB(H_cK)$ are as in Definition \ref{cartesion_prod_def}. 
Note:
\begin{enumerate}
    \item 
    Since $\cS_\cK$ is the sum of the tensor product of operator spaces it is an operator space, and it is clear that we have $\cS_\cK \subseteq \cB(H_\cG)\otimes \cB(H_\cH) = \cB(H_\cG\otimes H_\cH) \subseteq \cB(H_\cK)$ because $\cS_\cG, \cM_\cG' \subseteq \cB(H_\cG)$ and $\cS_\cH, \cM_\cH'\subseteq \cB(H_\cH)$. 
    Further, since $\mathcal{G}$ and $\mathcal{H}$ are quantum graphs, we have $\cS_\cG$ and $\cS_\cH$ are closed under adjoint. Additionally, $\cM_\cG'$ and $\cM_\cH'$ are von Neumannn algebras and thus closed under adjoint. Thus, $\cS_\cK$ is the tensor product of spaces closed under adjoint and is closed under adjoint.
    
    \item $\cM_\cK = \cM_\cG\bar{\otimes}\cM_\cH$ is the von Neumann algebra tensor product of two von Neumann algebras and thus a von Neumann algebra by construction. 
    
    
    
    \item It is known that for von Neumann algebras $\cM_\cG$ and $\cM_\cH$ we have $(\cM_\cG\bar{\otimes}\cM_\cH)' = \cM_\cG'\bar{\otimes}\cM_\cH'$ (see for example \cite{TensorProductvNA}).
    Suppose $A\otimes B,C\otimes D \in \cM_\cG' \otimes \cM_\cH' \subset  \cM_\cK'$. Then we have
    \begin{align*}
        (A\otimes B)\cS_\cK(C\otimes D) &= (A\otimes B) ((\cS_\cG \otimes \cM_\cH' + (\cM_\cG '\otimes \cS_\cH) )(C\otimes D)\\
        &= A\cS_\cG C \otimes BM_\cH'D + A\cM_\cG 'C\otimes B\cS_\cH D.
    \end{align*}
    Since $A,C\in \cM_\cG'$ and $\mathcal{G}$ is a quantum graph, the bicommutant property of $\cS_\cG$ gives us $A \cS_\cG C \subseteq \cS_\cG$, and similar reasoning shows $B\cS_\cH D\subseteq \cS_\cH$.  Finally, it follows that $A\cM_\cG'C\subset \cM_\cG'$ since $\cM_\cG'$ is an algebra. Similarly,  $B\cM_\cH 'D\subset \cM_\cH'$. Thus, $(\cM_\cG'\otimes \cM_\cH') 
    \cS_\cK\mathcal(\cM_\cG '\otimes \cM_\cH') \subset \cS_\cK,$. Since $\cM_\cG'\otimes \cM_\cH'$ is a generating subset of $\cM_\cK'$, we have $\cM_\cK'\cS_\cK\cM_\cK'\subset \cS_\cK$ as well. 

    \item Let $A\otimes B \in \cM_\cG'\otimes \cM_\cH'\subset \cM_\cK'.$ and  $S = G\otimes J + D\otimes H \in \cS_\cK$. Thus 
    \begin{align*}
        tr\big((S)(A\otimes B)^*\big) &= tr\big((G\otimes J+D\otimes H)(A^* \otimes B^*)\big)\\
        &= tr\big(GA^*\otimes JB^*\big) + tr(DA^* \otimes HB^*\big)\\
        &= tr(GA^*)tr(JB^*) + tr(DA^*)tr(HB^*) = 0
    \end{align*}
   since $G\perp A$ and $H\perp B$ by definition. Thus $tr(GA^*) = 0$ and $tr(HB^*) = 0$. Therefore, we have $S$ and $A\otimes B$ are orthogonal for all $A\otimes B\in \cM_\cG'\otimes \cM_\cH'$. Since these  generate $\cM_\cK'$ and $S\in \cS_\cK$, it follows that $\cS_\cK \subseteq(\cM_\cK')^\perp.$
\end{enumerate}
Thus,  $\mathcal{K}$ is a quantum graph.
\end{proof}

\subsection{Categorical Product}

 For classical graphs $G$ and $H$, the \textit{categorical product} is the graph $G\times H$ with vertex set $V(G)\times V(H)$ and edge relation given by $(v,a)\sim (w,b)$ if and only if $v\sim w$ in $G$ and  $a\sim b$ in $H$.
As with the Cartesian product, we can extend  categorical product of graphs to quantum graphs in an analogous manner. 

\begin{defn}[{c.f. \cite[Proposition 45]{graph_products}}]
\label{categor_prod_def}
Suppose $\mathcal{G} = (\cS_\cG , \cM_\cG , \cB(H_\cG ))$ and $\mathcal{H} = (\cS_\cH, \cM_\cH, \cB(H_\cH))$ are quantum graphs. Then we define the \textbf{categorical product} of $\mathcal{G}$ and $\mathcal{H}$ to be 
$$\mathcal{K} = \mathcal{G}\times \mathcal{H} = (\cS_\cK, \cM_\cK, \cB(H_\cK))$$ where $\cM_\cK = \cM_\cG \bar{\otimes}\cM_\cH$, $H_\cK = H_\cG \bar{\otimes} H_\cH$ and 
$$\cS_\cK = \cS_\cG \otimes \cS_\cH.$$
\end{defn}

\begin{prop}
The product defined in \ref{categor_prod_def} produces an (irreflexive) quantum graph.
\end{prop}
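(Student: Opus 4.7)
The plan is to verify the four defining conditions of an irreflexive quantum graph for $\mathcal{K} = (\cS_\cK,\cM_\cK,\cB(H_\cK))$, following exactly the template set by Proposition \ref{cartesian_get_graph_proof}. Namely, I would check that (i) $\cS_\cK$ is an adjoint-closed operator subspace of $\cB(H_\cK)$, (ii) $\cM_\cK$ is a von Neumann subalgebra of $\cB(H_\cK)$, (iii) $\cS_\cK$ is a bimodule over $\cM_\cK'$, and (iv) $\cS_\cK \subseteq (\cM_\cK')^\perp$ with respect to a suitable trace on $\cB(H_\cK)$. The categorical case should be strictly easier than the Cartesian case because $\cS_\cK$ is a pure tensor product of operator spaces rather than a sum of two such products.

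For (i), I would observe that $\cS_\cG \otimes \cS_\cH \subseteq \cB(H_\cG)\otimes \cB(H_\cH) \subseteq \cB(H_\cK)$, and that the tensor product of two adjoint-closed operator spaces is an adjoint-closed operator space (the adjoint acts factorwise on simple tensors and extends by linearity). Condition (ii) is immediate since the von Neumann algebraic tensor product of von Neumann algebras is a von Neumann algebra by construction.

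For (iii), I would invoke the commutation theorem $(\cM_\cG \bar\otimes \cM_\cH)' = \cM_\cG'\bar\otimes \cM_\cH'$ cited already in the Cartesian proof, so that it suffices to check the bimodule condition on simple tensors $A\otimes B, C\otimes D \in \cM_\cG' \otimes \cM_\cH'$. On a simple tensor $G\otimes J \in \cS_\cG \otimes \cS_\cH$ one computes
\[
(A\otimes B)(G\otimes J)(C\otimes D) = (AGC)\otimes (BJD),
\]
and the factors $AGC$ and $BJD$ lie in $\cS_\cG$ and $\cS_\cH$ respectively because $\mathcal{G}$ and $\mathcal{H}$ are quantum graphs. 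Linearity and the fact that $\cM_\cG'\otimes \cM_\cH'$ generates $\cM_\cK'$ then yield $\cM_\cK' \cS_\cK \cM_\cK' \subseteq \cS_\cK$.

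For (iv), I would again test orthogonality on simple tensors: for $G\otimes J \in \cS_\cK$ and $A\otimes B \in \cM_\cG'\otimes \cM_\cH'$, the product trace factors as
\[
\tr\bigl((G\otimes J)(A\otimes B)^*\bigr) = \tr(GA^*)\tr(JB^*) = 0,
\]
since each factor vanishes by the orthogonality conditions assumed for $\mathcal{G}$ and $\mathcal{H}$. The only point requiring care, and the closest thing to an obstacle, is making sure this orthogonality extends from the algebraic tensor product $\cM_\cG'\otimes \cM_\cH'$ to all of the spatial tensor product $\cM_\cG'\bar\otimes \cM_\cH' = \cM_\cK'$; this is handled by normality/weak-$*$ continuity of the relevant trace, exactly as in the Cartesian case. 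Once these four checks are assembled, the proposition follows.
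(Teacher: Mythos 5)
Your proposal is correct and follows exactly the route the paper intends: the paper omits the proof, stating it is conducted analogously to Proposition \ref{cartesian_get_graph_proof}, and your verification of the four conditions (operator space closed under adjoint, von Neumann algebra, $\cM_\cK'$-bimodule via the commutation theorem, and orthogonality to $\cM_\cK'$ via factorization of the trace on simple tensors) is precisely that analogous argument, simplified because $\cS_\cK$ is a single tensor product rather than a sum.
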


This proof can be conducted analogously to that of Proposition \ref{cartesian_get_graph_proof}, and thus we omit the proof.

\begin{remark}
 Definitions \ref{cartesion_prod_def}  and \ref{categor_prod_def} reduces to Kim and Mehta's definition of Cartesian (resp. categorical) product for submatricial systems \cite{graph_products} by taking the space of diagonal matrices in place of $\mathcal{M}'$. We note that the flip automorphism on tensor products identifies  $\mathcal{G}\square \mathcal{H}$ and $ \mathcal{H}\square \mathcal{G}$, as well as $\mathcal{G}\times\mathcal{H} $ with $ \mathcal{H}\times \mathcal{G}$.
Moreover, it follows that the process of taking the Cartesian (resp. categorical) product of classical graphs and forming its corresponding quantum graph is essentially equivalent to forming their corresponding quantum graphs and taking their Cartesian (resp. categorical) product as defined above. A more precise statement can be made in analogy to that in Proposition \ref{prop:checkingClassicalProductLex}.
\end{remark}

\subsection{Lexicographic Product}\label{subsec:Lex}
In addition to generalizing the definitions of Kim and Mehta, we are interested in the lexicographic product of quantum graphs. 
For classical graphs $G$ and $H$, the \textit{lexicographic product} is the graph $G[H]$ with vertex set $V(G)\times V(H)$ and edge relation given by $(v,a)\sim (w,b)$ if and only if either $v\sim w$ in $G$ or $v = w$ and $a\sim b$ in $H$.  This product is unique from the others considered here as it is the only non-commutative product.  One of the features of the lexicographic product is that the clique and independence number of a lexicographic product of graphs based on the inputs is the product of the input graphs \cite{lex_prod_GELLER197587}.  Moreover, the lexicographical product detects ``perfectness''  as the lexicographic product of graphs is perfect precisely when each factor is perfect \cite{perfect_prod_RAVINDRA1977177}.   Notably the problem of determining whether or not a graph arises as a lexicographical product is equivalent to the graph isomophism problem \cite{classical_graph_iso_productsdoi:10.1137/0215045}.

\begin{defn}
\label{lexic_product_def}
The \textbf{lexicographic product} of quantum graphs $\mathcal{G} = \big(\cS_\cG , \cM_\cG , \cB(H_\cG )\big)$ and $\mathcal{H} = \big(\cS_\cH, \cM_\cH, \cB(H_\cH)\big)$ is $$\mathcal{G}[\mathcal{H}] = \mathcal{K} = \big(\cS_\cK, \cM_\cK, \cB(H_\cK)\big)$$ where $\cM_\cK = \cM_\cG \bar{\otimes} \cM_\cH$, $H_\cK = H_\cG \bar{\otimes} H_\cH$ and $$\cS_\cK = 
\cS_\cG \otimes \cB(H_\cH) + \cM_\cH'\otimes 
\cS_\cH.$$
\end{defn}

Verifying that the lexicographic product of irreflexive quantum graphs is again an irreflexive quantum graph is similar to that of all previous products considered here, and thus is left as an exercise to the reader. We now show that the lexicographic product of quantum graphs associated to classical graphs essentially recovers the classical lexicographic product up to the notion of isomorphism introduced in \cite{bigalois_graphIso_2019}.

\begin{prop}
Let $G$ and $H$ be graphs, let $K = G[H]$ be their classical lexicographic product, $\mathcal{G}= (\mathcal{S}_{G}, D_{G}, \mathcal{B}(H_{G}))$, $\mathcal{H}= (\mathcal{S}_{H}, D_{H}, \mathcal{B}(H_{H}))$, $\mathcal{K}= (\mathcal{S}_{K}, D_{K}, \mathcal{B}(H_K))$ be the associated quantum graphs of $G$, $H$, and $K$, respectively.
Additionally, define $\mathcal{K}_q=\mathcal{G}[ \mathcal{H}]= (\mathcal S, \mathcal{M}, B(H_q)) $. Then there exists a unitary $U: H_{K}\to H_q $ such that
$U\cS_K U^*=\cS$, $UD_K U^*=\cM$, and $UB(H_K)U^*=B(H_q)$.
\label{prop:checkingClassicalProductLex}
\end{prop}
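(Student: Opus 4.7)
The approach is to use the canonical unitary $U\colon H_K \to H_q$ that identifies the standard basis vector $e_{(v_i, u_a)}$ of $H_K$ (indexed by $V(G) \times V(H)$) with the tensor $e_i \otimes e_a$ in $H_\mathcal{G} \otimes H_\mathcal{H}$, and to verify the three identifications by expanding operators in the standard matrix-unit bases. Since $U$ is a unitary between Hilbert spaces of equal dimension, $U \mathcal{B}(H_K) U^* = \mathcal{B}(H_q)$ is automatic. Under the basis correspondence, a diagonal matrix on $H_K$ is carried to a diagonal operator in the product basis of $H_\mathcal{G} \otimes H_\mathcal{H}$, so $U D_K U^* = D_n \otimes D_m = \mathcal{M}_\mathcal{G} \bar{\otimes} \mathcal{M}_\mathcal{H} = \mathcal{M}$, where $n = |V(G)|$ and $m = |V(H)|$.

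The substantive step is $U \mathcal{S}_K U^* = \mathcal{S}$. Under $U$, the matrix unit $E_{(v_i, u_a), (v_j, u_b)} \in M_{nm}(\mathbb{C})$ is sent to $E_{i,j} \otimes E_{a,b} \in M_n(\mathbb{C}) \otimes M_m(\mathbb{C})$. By the defining rule of the classical lexicographic product, $(v_i, u_a) \sim (v_j, u_b)$ in $K = G[H]$ precisely when either $v_i \sim v_j$ in $G$ (with $a, b$ arbitrary) or $v_i = v_j$ and $u_a \sim u_b$ in $H$. The matrix units of the first type span $\mathcal{S}_\mathcal{G} \otimes \mathcal{B}(H_\mathcal{H})$, while those of the second type have the form $E_{i,i} \otimes E_{a,b}$ with $u_a \sim u_b$ and span $D_n \otimes \mathcal{S}_\mathcal{H}$. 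Since $D_n = \mathcal{M}_\mathcal{G}'$ for the quantum graph associated to a classical graph, this second span coincides with the second summand in the definition of $\mathcal{S}$, and together the two types exhaust $\mathcal{S}$.

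I do not anticipate any serious obstacle; the argument reduces to matrix-unit bookkeeping. The one point requiring care is to confirm that the sum of the two subspaces defining $\mathcal{S}$ introduces no matrix units beyond those corresponding to edges of $K$. This follows immediately from the observation that $D_n$ contributes only the diagonal units $E_{i,i}$, so the second summand sits exactly in the slot corresponding to the case $v_i = v_j$ of the lexicographic edge relation, leaving the first summand to handle the case $v_i \sim v_j$ in $G$ without overlap or omission.
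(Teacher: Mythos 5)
Your proposal is correct and follows essentially the same route as the paper: the same canonical unitary $\delta_{(v,w)}\mapsto \delta_v\otimes\delta_w$, with $U\cB(H_K)U^*=\cB(H_q)$ automatic, $UD_KU^*=\cM$ checked on diagonal matrix units, and $U\cS_KU^*=\cS$ verified by matching matrix units $E_{(v_i,u_a),(v_j,u_b)}\mapsto E_{i,j}\otimes E_{a,b}$ against the lexicographic edge relation, using $D_n=\cM_\cG'$ (which is indeed the intended second summand $\cM_\cG'\otimes\cS_\cH$, as in the paper's proof). No gaps.
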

\begin{proof}
Note that  $\{\delta_{v,w}\}_{v\in V(G), w\in V(H)}$ is an orthonormal basis for  $H_K$; likewise, $\{\delta_{v} \otimes \delta_{w}\}_{v\in V(G), w\in V(H)}$ is an orthonormal  basis for $H_q$.  We claim that the map $U:H_K\to H_q$ given by $\delta_{v,w}\mapsto \delta_{v} \otimes \delta_{w}$ is a unitary that satisfies the conditions of the theorem.


First, by definition $U$ is a unitary, hence we automatically have $U\cB(H_K)U^* = \cB(H_q)$.
Now, let $\{E_{v_1,v_2} : v_1,v_2\in V(G) \}$, $\{E_{w_1,w_2}: w_1,w_2\in V(H)\}$, and $\{E_{(v_1,w_1),(v_2,w_2)}: v_1, v_2\in V(G), w_1,w_2\in V(H)  \}$,   be  the elementary matrices for $\cB(H_{G})$, $\cB(H_{H})$,  and $\cB(H_K)$, respectively. We note that $D_K = \operatorname{span} (E_{(v,w),(v,w)})$ and that $\cM = \operatorname{span}(E_{v,v}\otimes E_{w,w})$. A quick check shows that we have $UE_{(v,w),(v,w)}U^* = E_{v,v}\otimes E_{w,w}$. Hence $UD_K E^* = \cM$.

  
Finally, $$\cS = \cS_{G}\otimes \cB(H_{H}) + \cM_{G}'\otimes \cS_{H}.$$
Note that elements of $\cS_{G}$, $\cS_{H}$ and $\cS_K$ can have non-zero entries at the $i,j$-th entry if $i\sim j$ in the corresponding classical graph where we identify the vertices with the entries in the matrix.

We can easily see from the expression for $\cS$ that we have $E_{v_1,v_2}\otimes E_{w_1,w_2} \in \cS$ if and only if $w_1\sim w_2\in V(H)$ and $v_1=v_2\in V(G)$ or $v_1\sim v_2\in V(G)$. In particular, this is precisely the definition for the vertices $(v_1,w_1)$ and $(v_2,w_2)$ to be adjacent in $K$ from the definition of the lexicographic product. A quick calculation shows that
$UE_{(v_1,w_1),(v_2,w_2)}U^* = E_{v_1,v_2}\otimes E_{w_1,w_2}$. Hence, $E_{(v_1,w_1),(v_2,w_2)}\in \cS_K$ if and only if $E_{v_1,v_2}\otimes E_{w_1,w_2}\in \cS$, so  
$ U\cS_K U^*=\cS $.
\end{proof}

\subsection{The Strong Product} The strong product is the union of the relations defining the Cartesian and categorical products, and is in some sense the ``most connected'' graph generated by a pair.

\begin{defn}
    Suppose $\cG = (\cS_\cG , \cM_\cG , \cB(H_\cG ))$ and $\cH = (\cS_\cH, \cM_\cH, \cB(H_\cH))$ are quantum graphs. The \textbf{strong product} of $\cG$ and $\cH$ is the quantum graph defined by 
    $$\cK = \cG\boxtimes \cH = (\cS_\cK, \cM_\cK, \cB(H_\cK))$$
    where $\cM_\cK = \cM_\cG \bar{\otimes}\cM_\cH,$ $H_\cK = H_\cG \bar{\otimes}H_\cH$ and 
    $$\cS_\cK = \cS_\cG \otimes \cM'_\cH + \cM_\cG '\otimes \cS_\cH + \cS_\cG \otimes\cS_\cH.$$
    \label{def:strongProduct}
\end{defn}

While this product was not considered explicitly considered by Kim and Mehta, it is a combination of the Cartesian and categorical graph products. Thus, we can combine the results for the aforementioned products to show that for quantum graphs associated to classical graphs the strong product produces the associated quantum graph of the product classical graph defined in the classical setting holds. More explicitly, 

\begin{prop}
    For classical graphs $G$ and $H$ let $K = G\boxtimes H$ be their strong product, and let $\cK = (\cS_K, \cM_K, \cB(H_K))$ be the associated quantum graph of $K$. Additionally, let $\cG$ and $\cH$ be the associated quantum graphs of $G$ and $H$, respectively, and define $ \cK_q =\cG\boxtimes\cH =  (\cS, \cM, \cB(H_q))$. Then 
    there is a unitary $U: H_K \to H_{q}$ implementing an isomorphism between $\cK$ and $\cK_q$.
\end{prop}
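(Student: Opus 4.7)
The plan is to follow the blueprint of Proposition \ref{prop:checkingClassicalProductLex} essentially verbatim. I will take $U\colon H_K \to H_q$ to be the unitary defined on standard orthonormal bases by $\delta_{(v,w)} \mapsto \delta_v \otimes \delta_w$ for $v \in V(G)$, $w \in V(H)$. Since this is a bijection between orthonormal bases of finite-dimensional Hilbert spaces of equal dimension, $U$ is automatically unitary, so $U\cB(H_K)U^* = \cB(H_q)$ is immediate. A direct computation on matrix units gives $UE_{(v,w),(v,w)}U^* = E_{v,v} \otimes E_{w,w}$, which upon taking spans yields $UD_K U^* = \cM = D_G \bar{\otimes} D_H$.

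The substantive step is checking $U\cS_K U^* = \cS$. The same conjugation computation gives $UE_{(v_1,w_1),(v_2,w_2)}U^* = E_{v_1,v_2} \otimes E_{w_1,w_2}$, and both $\cS_K$ and $\cS$ are spanned by their respective matrix units indexed by adjacent pairs. Thus it suffices to match adjacency of $(v_1,w_1)$ and $(v_2,w_2)$ in $K$ with membership of $E_{v_1,v_2} \otimes E_{w_1,w_2}$ in $\cS = \cS_\cG \otimes \cM_\cH' + \cM_\cG' \otimes \cS_\cH + \cS_\cG \otimes \cS_\cH$. Using that $\cM_\cG' = D_G$ and $\cM_\cH' = D_H$ are maximal abelian, the first summand contributes exactly the pairs with $v_1 \sim v_2$ in $G$ and $w_1 = w_2$, the second those with $v_1 = v_2$ and $w_1 \sim w_2$ in $H$, and the third those with $v_1 \sim v_2$ in $G$ and $w_1 \sim w_2$ in $H$. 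Their union is precisely the edge set of the classical strong product $K = G \boxtimes H$.

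I do not anticipate any real analytic obstacle: the argument is a routine case check, and the strong product is arguably the cleanest of the four products in this respect, since the three defining summands of $\cS_\cK$ correspond transparently to the three disjoint types of strong-product edges. As a sanity check, the same conclusion can be reached by applying the Cartesian and categorical analogues of Proposition \ref{prop:checkingClassicalProductLex} separately and then taking the union of edge relations, but constructing $U$ once and verifying its three conditions directly is more economical and keeps the proof parallel to the lexicographic case already treated.
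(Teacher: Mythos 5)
Your proposal is correct and follows essentially the same route the paper takes: the paper proves this by noting the argument is analogous to Proposition \ref{prop:checkingClassicalProductLex}, and your writeup is precisely that analogy carried out, with the same unitary $\delta_{(v,w)}\mapsto \delta_v\otimes\delta_w$ and the matching of the three summands of $\cS_{\cK}$ with the three edge types of the classical strong product. No gaps to flag.
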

\begin{proof}
    This proof can be conducted analogously to the proof of Prop. \ref{prop:checkingClassicalProductLex}.
\end{proof}

\begin{prop}
The product defined in \ref{def:strongProduct} produces an (irreflexive) quantum graph.
\end{prop}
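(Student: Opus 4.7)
The plan is to reduce everything to the two preceding products rather than redo the verification from scratch. Observe that by inspection of Definitions \ref{cartesion_prod_def}, \ref{categor_prod_def}, and \ref{def:strongProduct},
$$\cS_\cK \;=\; \bigl(\cS_\cG \otimes \cM'_\cH + \cM_\cG'\otimes \cS_\cH\bigr) \;+\; \cS_\cG \otimes \cS_\cH \;=\; \cS_{\cG\square\cH} + \cS_{\cG\times\cH},$$
while $\cM_\cK$ and $\cB(H_\cK)$ are exactly the ambient von Neumann algebra and Hilbert space of both the Cartesian and the categorical product. So I would frame the proof as showing that each of the four defining properties of a quantum graph is preserved under taking sums of edge spaces built on the same $(\cM_\cK, \cB(H_\cK))$.

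First I would note that $\cM_\cK = \cM_\cG \bar{\otimes}\cM_\cH$ is a von Neumann subalgebra of $\cB(H_\cK)$ exactly as in Proposition \ref{cartesian_get_graph_proof}, and that $\cS_\cK$, being a sum of two operator subspaces of $\cB(H_\cK)$ each already shown to be self-adjoint (by Proposition \ref{cartesian_get_graph_proof} and its categorical analog), is an operator space in $\cB(H_\cK)$ closed under adjoint. Next I would verify the bimodule property: since $\cM_\cK'\cS_{\cG\square\cH}\cM_\cK'\subseteq \cS_{\cG\square\cH}$ and $\cM_\cK'\cS_{\cG\times\cH}\cM_\cK'\subseteq \cS_{\cG\times\cH}$ by the previous propositions, it follows immediately that
$$\cM_\cK'\,\cS_\cK\,\cM_\cK' \;\subseteq\; \cM_\cK'\cS_{\cG\square\cH}\cM_\cK' + \cM_\cK'\cS_{\cG\times\cH}\cM_\cK' \;\subseteq\; \cS_\cK.$$

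For the irreflexivity condition $\cS_\cK \subseteq (\cM_\cK')^\perp$, I would again use linearity: each of $\cS_{\cG\square\cH}$ and $\cS_{\cG\times\cH}$ is already contained in $(\cM_\cK')^\perp$, and the orthogonal complement of a set is a linear subspace, so the sum $\cS_\cK$ lies in $(\cM_\cK')^\perp$ as well. There is no substantive obstacle here; the only thing to double-check is that the new summand $\cS_\cG \otimes \cS_\cH$ individually satisfies the bimodule and orthogonality properties, but this is exactly what was verified (analogously to Proposition \ref{cartesian_get_graph_proof}) for the categorical product, since the categorical edge space is precisely $\cS_\cG \otimes \cS_\cH$. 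Thus $\cK = \cG\boxtimes\cH$ is an irreflexive quantum graph, and the proof reduces to appealing to the two preceding propositions and the observation that the defining conditions pass to finite sums of edge spaces over a common $(\cM_\cK, \cB(H_\cK))$.
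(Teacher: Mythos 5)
Your proof is correct, but it takes a different route from the paper: the paper simply performs (or rather, declares) a direct verification of the four quantum-graph axioms for $\cS_\cK = \cS_\cG \otimes \cM'_\cH + \cM_\cG'\otimes \cS_\cH + \cS_\cG\otimes\cS_\cH$, ``analogously to Proposition \ref{cartesian_get_graph_proof}'', i.e.\ it redoes the same term-by-term computation as in the Cartesian case with one extra summand. You instead observe the decomposition $\cS_\cK = \cS_{\cG\square\cH} + \cS_{\cG\times\cH}$ over the common pair $(\cM_\cK,\cB(H_\cK))$ and note that self-adjointness, the $\cM_\cK'$-bimodule property, and containment in $(\cM_\cK')^\perp$ are all stable under sums of edge spaces on a fixed ambient algebra; this correctly reduces the statement to the two preceding propositions. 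Your modular argument buys brevity and makes transparent why no new computation is needed for the strong product (the only genuinely new summand $\cS_\cG\otimes\cS_\cH$ is exactly the categorical edge space, already handled), whereas the paper's direct approach is self-contained and does not lean on the categorical proposition, whose proof the paper also leaves to the reader. The one point you gloss over, at the same level of informality as the paper itself, is that the sum of two operator spaces is again an operator space (closure of the sum is not addressed either here or in Definition \ref{cartesion_prod_def}), so this is not a gap relative to the paper's standard of rigor.
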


\begin{proof}
This proof can be conducted analogously to the proof of Prop. \ref{cartesian_get_graph_proof}.
\end{proof}

\section{The Quantum \texorpdfstring{$b$}{\textit{b}}-fold Chromatic Number}\label{section:b-fold}

The $b$-fold chromatic number is a modification of the chromatic number that considers colorings of a vertex using $b$ colors. Our main motivation for considering this relatively young invariant of graphs is its natural relation to the chromatic number of lexicographic products of graphs, as will be considered in detail in Section \ref{sec:lexicographicBound}. The $b$-fold chromatic number is also integral to the definition of the fractional chromatic number, a more commonly studied invariant of graphs that is an extension of the traditional chromatic number to rational values, and can provide lower bounds to the chromatic number. We will briefly comment on a non-local version of this graph invariant below as well.

We adopt the following notation in the present section as well as Section \ref{sec:lexicographicBound}.
 \begin{notation}
Let $\set{P_{i}}_{i\in I}$ be a family of projections in  $ \mathcal{B}(H)$. The supremum $\bigvee_{i\in I} P_i$ (respectively, infimum $\bigwedge_{i\in I} P_i$) is the projection onto the closure of $\operatorname{span}\left( \bigcup_{i\in I}P_i H \right)$ in $H$, (respectively, $\operatorname{span} \left( \bigcap_{i\in I}P_i H \right)$ in $H$). In the case where $\set{P_{i}}_{i\in I}$  are projections in a von Neumann algebra $\mathcal{M}\subseteq \mathcal{B}(H)$,   $\bigvee_{i\in I}P_i$ and $ \bigwedge_{i\in I} P_i$ are projections in $\mathcal{M}$, see for example \cite[Chapter V, Proposition 1.1]{TakesakiI}. When $I=\{1,2,\ldots, n\}$ is a finite set, we will write $P_1\vee P_2\vee\cdots \vee P_n$ (respectively, $P_1\wedge P_2\wedge\cdots \wedge P_n$ ) in place of $\bigvee_{i\in I}P_i$ (respectively, $\bigwedge_{i\in I}P_i$). We remark
$\bigvee_{i\in I} P_i$ is the sum of the projections when $\set{P_{i}}_{i\in I}$ is an orthogonal family, 
and  $\bigwedge_{i\in I} P_i$ is the  product of the projections when $\set{P_{i}}_{i\in I}$ is commuting family. 
\end{notation}
\begin{notation}
Let $n\in\N$. Then

\begin{itemize}
    \item   $[n]=\set{1,\ldots, n}$, and $[n] + k = \{k+1,k+2,...,k+n\}$
    \item Given a non-empty set $S$
    $$\binom{S}{n}$$
    will denote  the set of all subsets  $T\subseteq S$ such that $|T|=n$
\end{itemize}

 \end{notation}

Let $G=(V,E)$ be a graph. We say that $G$ admits a $b$-fold coloring using $c$ colors if there exists an assignment of $b$ distinct colors from the set $[c]$ to $V$ so that whenever $(v,w)\in E$, the set of colors assigned to $v$ is disjoint from that of $w$. 
The $b$-fold chromatic number of a graph $G$, denoted by $\chi_b(G)$, is the minimum number of colors such that $G$ admits a $b$-fold coloring. 
The Kneser graph has vertices given by the sets $\binom{[c]}{b}$, and has an edge between sets $S$ and $T$ if and only if $S\cap T=\emptyset$. We may also understand the $b$-fold chromatic number as a homomorphism into the Kneser graph $K_{c,b}$.  
For the convenience of the reader, we summarize select facts about the $b$-fold chromatic number.   

\begin{enumerate}
    \item If $b=1$, then the $b$-fold chromatic number reduces to the standard chromatic number
    \label{item:bfoldReducesToChromatic}
    
    \item $\chi_b(G)\leq b\chi(G)$\label{item:bfoldChromaticNumberBoundedByChromatic}
    
    \item If $G$ is the complete graph on $n$ vertices then $\chi_b(G)= nb$

    \label{item:completeGraphBfold}

    \item If $a,b\in \N$ then
    $\chi_{a+b}(G)\leq \chi_a{G} +\chi_b(G)$.
    \label{item:bFoldSubadditiveClassical}
 
     \item If $G=H[K]$ is the lexicographic product of graphs (see Section \ref{sec:GraphProducts})  and $b=\chi(K)$,  then  $\chi (G)= \chi_{b}(H)$.\label{item:bfoldlexicographic}
\end{enumerate}

While the $b$-fold chromatic number of a graph $G$ initially appears to be simply $b\cdot \chi(G)$, one can readily verify that for the 5-cycle graph $C_5$, $\chi(G)=3,$ and $\chi_2(G)=5$, illustrating that the inequality in \ref{item:bfoldChromaticNumberBoundedByChromatic} may be strict.  For the purpose of determining bounds on quantum chromatic numbers of products of quantum graphs, we are most interested in item \ref{item:bfoldlexicographic} above, which will be further explored in  Section \ref{sec:lexicographicBound}.

In our present work, motivated by the results found in classical graph theory and extensions of their results to quantum graph theory, our aim is to develop a variant of the $b$-fold chromatic number, suitable for quantum graphs.  To that end, we modify the form of a quantum chromatic number for a quantum graph as described by Brannan, Ganesan, and Harris in \cite{GanesanHarrisQuantumToClassical} to create a quantum $b$-fold chromatic number with quantum input.

\begin{defn}\label{def:BFoldColoring}
Let $\mathcal{G}= (\mathcal{S}, \mathcal{M}, \cB(H))$ be a quantum graph and $b\in \N$. 
For $t\in \{loc, q, qa, qc, \}$, we say that $\mathcal{G}$ is $t$-$b$ colorable (or equivalently, admits a $t$-$b$ fold coloring)  if there exists $c\in \N$,  a von Neumann algebra $\mathcal{N}$,  projections $\set{P_a}_{a=1}^c\in \mathcal{M}\bar\otimes \mathcal{N}$ so that 
\begin{enumerate}
    \item $P_{a_1}P_{a_2}=P_{a_2}P_{a_1} $ for all $a_1,a_2\in \{1,\ldots, c\}$,
    \item $\displaystyle\sum_{\set{a_1,\ldots, a_b}\in \binom{[c]}{b}} P_{a_1}\cdots P_{a_b} = I_{\mathcal{M}\bar\otimes \mathcal{N}}$,\label{item:bFoldPartition}
    \item $P_a(X\otimes I_\mathcal{N})P_a=0$ for all $a\in \set{1,\ldots, c}$, and $X\in\mathcal{S}$.
\end{enumerate}
where  $\dim \mathcal{N} = 1$ if $t = loc$, $\dim\mathcal{N}<\infty$ if $t=q$,  $\mathcal{N}$ embeds into $\mathcal{R}^\omega$ if $t = qa$, or $\mathcal{N}$ is finite if $t=qc$.

We define the $t$-$b$-fold chromatic number of $\mathcal{G}$ to be $$\chi_{b,t}(\mathcal{G})=\min\set{c: \mathcal{G}\text{  admits a }t\text{-}b \text{ fold coloring using }c\text{ colors} }$$
when $\mathcal{G}$ is $t$-$b$-colorable, or $\chi_{b,t}(\mathcal{G})=\infty$ otherwise.
 
\end{defn}
The following result is implicit in Proposition 5.3 of \cite{GanesanHarrisQuantumToClassical}. 
\begin{thm}\label{thm:strategies are unitarily conjugate preserved}
Let $\mathcal{G}= (\mathcal{S}, \mathcal{M},\cB(H))$ and $u\in\mathcal{U}(\cB(H))$ be a unitary.  Then
\[\chi_{b,t}(\mathcal {G})= \chi_{b,t}(u^*\mathcal{S}u, u^*\mathcal{M}u,\cB(H) ) \]
\end{thm}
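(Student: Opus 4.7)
The plan is to show that conjugation by $u\otimes I_\mathcal{N}$ gives a bijection between $t$-$b$-fold colorings of $\mathcal{G}$ and those of $(u^*\mathcal{S}u, u^*\mathcal{M}u, \cB(H))$ that preserves the number of colors and the auxiliary algebra $\mathcal{N}$. Since the procedure is invertible (replace $u$ by $u^*$), establishing only one inequality will suffice.

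Suppose $\{P_a\}_{a=1}^c \subseteq \mathcal{M}\bar\otimes\mathcal{N}$ is a $t$-$b$-fold coloring of $\mathcal{G}$ in the sense of Definition \ref{def:BFoldColoring}. Set $Q_a := (u^*\otimes I_\mathcal{N})P_a(u\otimes I_\mathcal{N})$ for $1\leq a\leq c$. Since $u\otimes I_\mathcal{N}$ is unitary, conjugation by it is a normal $*$-isomorphism of $\cB(H)\bar\otimes\mathcal{N}$ that carries $\mathcal{M}\bar\otimes\mathcal{N}$ onto $u^*\mathcal{M}u\bar\otimes\mathcal{N}$. Hence each $Q_a$ is a projection in the algebra required by Definition \ref{def:BFoldColoring}, and condition (1) (commutativity of the $Q_a$) follows at once from commutativity of the $P_a$.

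For condition (2), linearity and multiplicativity of the $*$-isomorphism give
\[
\sum_{\{a_1,\ldots,a_b\}\in\binom{[c]}{b}} Q_{a_1}\cdots Q_{a_b} \;=\; (u^*\otimes I_\mathcal{N})\Bigl(\sum_{\{a_1,\ldots,a_b\}\in\binom{[c]}{b}} P_{a_1}\cdots P_{a_b}\Bigr)(u\otimes I_\mathcal{N}) \;=\; I.
\]
For condition (3), an arbitrary element of $u^*\mathcal{S}u$ has the form $u^*Xu$ with $X\in\mathcal{S}$. Using $(u\otimes I_\mathcal{N})(u^*Xu\otimes I_\mathcal{N})(u^*\otimes I_\mathcal{N})=X\otimes I_\mathcal{N}$, we compute
\[
Q_a(u^*Xu\otimes I_\mathcal{N})Q_a \;=\; (u^*\otimes I_\mathcal{N})\,P_a(X\otimes I_\mathcal{N})P_a\,(u\otimes I_\mathcal{N}) \;=\; 0.
\]

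Because the ambient auxiliary algebra $\mathcal{N}$ is unchanged, the type $t\in\{loc,q,qa,qc\}$ is preserved verbatim. This shows $\chi_{b,t}(u^*\mathcal{S}u,u^*\mathcal{M}u,\cB(H))\leq \chi_{b,t}(\mathcal{G})$; applying the same argument to the unitary $u^*$ and the quantum graph $(u^*\mathcal{S}u, u^*\mathcal{M}u,\cB(H))$ yields the reverse inequality. No step presents a real obstacle — the result is essentially a bookkeeping verification that Definition \ref{def:BFoldColoring} is stable under spatial unitary equivalence, which is exactly the content implicit in \cite{GanesanHarrisQuantumToClassical}.
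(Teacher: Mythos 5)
Your proof is correct and follows essentially the same route as the paper: conjugating the coloring projections by $u\otimes I_{\mathcal N}$ (the paper phrases it with the set-indexed projections $Q_T$, but the content is identical) and noting the correspondence is invertible, with $\mathcal N$ and hence the type $t$ unchanged. The only cosmetic difference is that the paper also records explicitly that $(u^*\mathcal{S}u,u^*\mathcal{M}u,\cB(H))$ is again a quantum graph via $u^*\mathcal{M}'u=(u^*\mathcal{M}u)'$ and trace invariance, which you implicitly assume.
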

\begin{proof}
    We will show that there is a bijective correspondence between the $t$-$b$-fold colorings of $\mathcal{G}$ and  $t$-$b$-fold colorings of $(u^*\mathcal{S}u, u^*\mathcal{M}u,\cB(H))$. So see that  $(u^*\mathcal{S}u, u^*\mathcal{M}u,\cB(H))$ is a quantum graph, note that  the equality $u^*M'u=(u^*Mu)'$ is readily verified. Moreover; since $\operatorname{Tr}(Xu^*yu)=\operatorname{Tr}(yuxu^*)$ for all $x,y\in \cB(H)$, $u^*(M')^\perp u =(u^*M'u)^\perp$. Thus, it can be easily checked that $(u^*\mathcal{S}u, u^*\mathcal{M}u,\cB(H))$ is a quantum graph.  
\\
    Now suppose that $\{Q_T\}_{T\in \binom{[c]}{b} }$ are projections in $\mathcal{M}\bar\otimes \mathcal{N}$ summing to $I_{\mathcal{M}\bar{\otimes} \mathcal{N}}$ for an appropriate von Neumann algebra $\mathcal{N}$, then $R_T= (U^*\otimes I_{\mathcal{N}})Q_T(U\otimes \mathcal{N})$. It is clear that $\sum_{T\in \binom{[c]}{b}} R_T= I_{\mathcal{M}\bar{\otimes}\mathcal{N}}$, and that $Q_T(X\otimes I_{\mathcal{N}})Q_S=0$ iff $R_T((u^*Xu)\otimes I_{\mathcal{N}})R_S=0$ for any $X\in \mathcal{S}$ and $S,T\in \binom{[c]}{b}$.  Thus, there is a bijective correspondence between $t$-$b$-fold colorings of $\mathcal{G}$ and $(u^*\mathcal{S}u, u^*\mathcal{M}u,\cB(H))$.
 \end{proof}

To clarify the meaning of quantum $b$-fold chromatic number, we interpret each $P_a$ to be indicating which vertex receives color $a$. The $b$-product of these projections $P_{a_1}...P_{a_b}$ can be understood to represent which vertices are colored by the set of colors $\set{a_1,\ldots, a_b}$. Thus, every vertex being assigned a set of $b$ colors corresponds to the sum of such products being the identity, and the third condition corresponds to preventing adjacent vertices from sharing any colors. The non-local aspect of the game comes from $\mathcal{N}$. Note that the commutativity of the projections is essential to make the coloring order independent.

It is often useful to use an equivalent formulation with projections corresponding to entire sets of colors rather than individual colors. Here we show that this corresponds entirely to Definition \ref{def:BFoldColoring}.

\begin{thm}

    Let $\mathcal{G}=(\mathcal{S}, \mathcal{M}, \cB(H))$ be a quantum graph, $b,c\in \N$ and $t\in \{loc, q, qa,qc\}$. The following are equivalent:
    \begin{enumerate}
        \item $\mathcal{G}$ is $t$-$b$-fold colorable using $c$ colors.\label{item:bfoldcolorable}
        \item There exists a finite von Neumann algebra $\mathcal{N}$ with a faithful normal trace, a family of projections $\set{Q_T}_{T\in \binom{[c]}{b}} \in \mathcal{M}\bar\otimes \mathcal{N} $ such that
            \begin{itemize}
                \item $\sum_{T\in \binom{[c]}{b}} Q_T= I_{\mathcal{M}\bar\otimes \mathcal{N}}$ 
                \item $Q_S(X\otimes I_\mathcal{N})Q_T=0$  for all $S,T\subseteq \binom{[c]}{b}$  such that $S\cap T\neq \emptyset$, and for all $X\in \mathcal{S}$
            \end{itemize}
            where  $\dim \mathcal{N} = 1$ if $t=loc$,  $\dim \mathcal{N}<\infty$ if $t=q$ ,   $\mathcal{N}$ embeds into $\mathcal{R}^\omega$ if $t=qa$, $\mathcal{N}$ is tracial if $t=qc$ .
            \label{item:bfoldcolorableIndexedSets}
        \item There exists a $t$-homomorphism $\Phi: \mathcal{G}\to K_{c,b}$ \label{item:bfoldcororableGraphHomomorphism}
    \end{enumerate}
    \label{Thm:equivBFoldDefs}
\end{thm}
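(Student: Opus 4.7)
The plan is to establish \ref{item:bfoldcolorable} $\Leftrightarrow$ \ref{item:bfoldcolorableIndexedSets} and \ref{item:bfoldcolorableIndexedSets} $\Leftrightarrow$ \ref{item:bfoldcororableGraphHomomorphism} separately, keeping the ancillary algebra $\mathcal{N}$ fixed throughout each implication so that the type constraint corresponding to $t$ is preserved.

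For \ref{item:bfoldcolorable} $\Rightarrow$ \ref{item:bfoldcolorableIndexedSets}, I would pass to the joint spectral decomposition of the commuting family $\{P_a\}_{a=1}^c$. For each $T'\subseteq[c]$ set
\[
E_{T'}=\prod_{a\in T'}P_a\prod_{a\notin T'}(I-P_a),
\]
yielding a family of mutually orthogonal projections in $\mathcal{M}\bar\otimes\mathcal{N}$ with $\sum_{T'}E_{T'}=I$ and $\prod_{a\in A}P_a=\sum_{T'\supseteq A}E_{T'}$ for any $A\subseteq[c]$. Summing this identity over $A\in\binom{[c]}{b}$ and invoking condition~\ref{item:bFoldPartition} of Definition~\ref{def:BFoldColoring} gives
\[
\sum_{T'\subseteq[c]}\binom{|T'|}{b}E_{T'}=I=\sum_{T'\subseteq[c]}E_{T'},
\]
which, by orthogonality of the $E_{T'}$, forces $E_{T'}=0$ whenever $|T'|\neq b$. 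Setting $Q_T:=E_T$ for $T\in\binom{[c]}{b}$ produces the desired partition of the identity, and for any $a\in S\cap T$ one has $Q_S=P_aQ_S$ and $Q_T=P_aQ_T$, so $Q_S(X\otimes I_\mathcal{N})Q_T=Q_S\bigl(P_a(X\otimes I_\mathcal{N})P_a\bigr)Q_T=0$ by condition~(3) of Definition~\ref{def:BFoldColoring}.

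For the converse \ref{item:bfoldcolorableIndexedSets} $\Rightarrow$ \ref{item:bfoldcolorable}, define $P_a=\sum_{T\ni a}Q_T$ for each $a\in[c]$. Any family of projections summing to $I$ is automatically pairwise orthogonal, so each $P_a$ is itself a projection and all $\{P_a\}$ commute inside the commutative algebra generated by $\{Q_T\}$. Orthogonality of the $Q_T$'s collapses $P_{a_1}\cdots P_{a_b}=Q_{\{a_1,\ldots,a_b\}}$, and summing over $\binom{[c]}{b}$ returns condition~\ref{item:bFoldPartition}. Finally,
\[
P_a(X\otimes I_\mathcal{N})P_a=\sum_{S,T\ni a}Q_S(X\otimes I_\mathcal{N})Q_T=0,
\]
since each summand has $a\in S\cap T$.

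The equivalence \ref{item:bfoldcolorableIndexedSets} $\Leftrightarrow$ \ref{item:bfoldcororableGraphHomomorphism} I would handle by specializing the standard correspondence (implicit in the $t$-homomorphism discussion in Section~\ref{sec:Prelim} and in \cite{GanesanHarrisQuantumToClassical}) between $t$-homomorphisms $\mathcal{G}\to H$ into a classical graph $H$ and families of projections in $\mathcal{M}\bar\otimes\mathcal{N}$ indexed by $V(H)$ that partition $I$ and satisfy the orthogonality-on-$\mathcal{S}$ condition for every non-adjacent pair (including $v=w$ in the irreflexive setting). Taking $H=K_{c,b}$, whose vertex set is $\binom{[c]}{b}$ and whose non-adjacency condition on distinct vertices is exactly $S\cap T\neq\emptyset$, the homomorphism data specializes to precisely the projection family of \ref{item:bfoldcolorableIndexedSets}. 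The main obstacle is the step \ref{item:bfoldcolorable} $\Rightarrow$ \ref{item:bfoldcolorableIndexedSets}: commutativity of the $\{P_a\}$ is essential for accessing the joint spectral picture, and the combinatorial kernel of the argument is that $\binom{k}{b}=1$ only at $k=b$, which together with orthogonality of the spectral projections annihilates every $E_{T'}$ of the wrong size.
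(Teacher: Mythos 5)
Your proposal is correct, and its overall architecture --- proving \ref{item:bfoldcolorable}$\Leftrightarrow$\ref{item:bfoldcolorableIndexedSets} with the same ancillary algebra $\mathcal{N}$ in both directions, then obtaining \ref{item:bfoldcolorableIndexedSets}$\Leftrightarrow$\ref{item:bfoldcororableGraphHomomorphism} by specializing the known correspondence between $t$-homomorphisms into a classical graph and projection families, applied to the Kneser graph $K_{c,b}$ --- matches the paper. Your \ref{item:bfoldcolorableIndexedSets}$\Rightarrow$\ref{item:bfoldcolorable} direction (set $P_a=\sum_{T\ni a}Q_T$, use the automatic mutual orthogonality of projections summing to $I$, collapse $P_{a_1}\cdots P_{a_b}$ to $Q_{\{a_1,\ldots,a_b\}}$, and expand $P_a(X\otimes I_{\mathcal{N}})P_a$ as a double sum with $a\in S\cap T$ in every summand) is essentially the paper's argument, stated somewhat more cleanly than its inclusion--exclusion presentation. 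The genuinely different ingredient is your \ref{item:bfoldcolorable}$\Rightarrow$\ref{item:bfoldcolorableIndexedSets}: the paper simply takes $Q_T=\prod_{a\in T}P_a$, which sums to the identity by condition \ref{item:bFoldPartition} of Definition \ref{def:BFoldColoring} and satisfies the vanishing condition by factoring out a common $P_a$; you instead pass to the full joint spectral resolution $E_{T'}=\prod_{a\in T'}P_a\prod_{a\notin T'}(I-P_a)$ over all $T'\subseteq[c]$ and use the counting identity $\sum_{A\in\binom{[c]}{b}}\prod_{a\in A}P_a=\sum_{T'}\binom{|T'|}{b}E_{T'}$ together with orthogonality to force $E_{T'}=0$ whenever $|T'|\neq b$. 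This is longer than the paper's direct construction (and a posteriori your $Q_T$ coincides with the paper's, since the correction terms vanish), but it buys an immediate proof of what the paper only records afterwards as Remark \ref{rmrk:big_products_disappear}: products of more than $b$ distinct $P_a$'s vanish, since $\prod_{a\in A}P_a=\sum_{T'\supseteq A}E_{T'}=0$ for $|A|>b$. Both routes are valid; the paper's is shorter, yours is more self-contained about the structure of the commuting family.
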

\begin{proof}

First we show that \ref{item:bfoldcolorable}$\implies$\ref{item:bfoldcolorableIndexedSets}:

    Let us assume that $\mathcal{G}$ is $t$-$b$-colorable using $c$ colors. Let $\mathcal{N}$ be a von Neumann algebra and $\set{P}_{a=1}^c\in \mathcal{M}\bar\otimes \mathcal{N}$ be projections as in the definition of a $t$-$b$-fold coloring.  For each $T\in \binom{[c]}{b}$ with $T=\set{a_1,\ldots, a_b}$, let $Q_T=P_{a_1}\cdots P_{a_b} $. By (1) of Definition \ref{def:BFoldColoring} indexing the $Q_T$ by $b$ element subsets is well defined.  
    Further, by (2) of Definition \ref{def:BFoldColoring}, \[\sum_{T\in  \binom{[c]}{b}} Q_T= I_{\mathcal{M}\bar{\otimes } \mathcal{N}}.\]
    Now suppose that $S,T\in \binom{[c]}{b}$ with $a\in S\cap T$. Using the definition of $Q_T$ as well as property (1) of Definition \ref{def:BFoldColoring}, we may write $Q_T=P_aQ_{T'}$ and $Q_S=Q_{S'}P_a$ for some $S',T'\in \binom{[c]}{b-1}$. Thus for any $X\in \mathcal{S}$, (3) of Definition \ref{def:BFoldColoring} implies that $Q_S(X\otimes I_{\mathcal{N}})Q_T=Q_{S'}P_a(X\otimes I_{\mathcal{N}})P_aQ_{T'}=0 $.

    Now, we show \ref{item:bfoldcolorableIndexedSets} $\implies$ \ref{item:bfoldcolorable}:
    
    Assume that \ref{item:bfoldcolorableIndexedSets} holds. For each $a\in [c]$, define 
    \[P_a= \bigvee_{a\in T} Q_T\]
    where $T$ ranges over sets in $\binom{[c]}{b}$.  Equivalently, one can verify that 
    \begin{align*}
      P_a &= \sum_{T} Q_T-\sum_{T_1\neq T_2} Q_{T_1}Q_{T_2} +\sum_{T_1,T_2,T_3, T_i\neq T_j}Q_{T_1}Q_{T_2} Q_{T_3}-\cdots + (-1)^{b} Q_{T_1}Q_{T_2}\cdots Q_{T_b}\\
      &= \sum_{T\in S_a}Q_T
    \end{align*}
    where the sums in the first equality run over distinct sets in $S_a$, and note that the PVM $\{Q_T\}$ is made up of orthogonal projections to obtain the second equality. 
    The collection $\set{P_a}_{a=1}^c$ can be seen to be projections.  
    
    Once again using that the $\{Q_T\}$ are orthogonal projections, we note that $P_aP_b = \sum_{a,b\in T}Q_T$ where $T$ ranges over sets in $\binom{[c]}{b}$, and it is clear that these projections the satisfy the commutation property (1) in Definition \ref{def:BFoldColoring}. 
    Further, by construction we have,  $Q_{a_1,a_2,\ldots, a_b}=P_{a_1}P_{a_2}\cdots P_{a_b}$ so 
    $$ \sum_{\{a_1,...,a_b\}\in \binom{[c]}{b}} P_{a_1}...P_{a_b} = \sum_{T\in \binom{[c]}{b}} Q_T= I_{\mathcal{M}\bar\otimes \mathcal{N}},$$ 
    verifying property (2) in Definition \ref{def:BFoldColoring}.

    Finally, for any $X\in \mathcal{S}$, we have
    \begin{align*}
        P_a(X\otimes I_\mathcal{N})P_a &= P_a\left(\sum_{\{a_1,...,a_b\}\in \binom{[c]}{b}} P_{a_1}...P_{a_b}\right)(X\otimes I_\mathcal{N})\left(\sum_{\{a'_1,...,a'_b\}\in \binom{[c]}{b}} P_{a'_1}...P_{a'_b}\right)P_a\\
        &= \sum_{\{a_1,...,a_b\},\{a'_1,...,a'_b\}\in \binom{[c]}{b}}P_{a_1}\left(Q_{a, a_2,...,a_b}(X\otimes I_\mathcal{N})Q_{a,a'_2,...,a'_b}\right)P_{a'_1}\\
        &= 0
    \end{align*}
    where WLOG we choose $a_1=a$ if $a\in \{a_1,...,a_b\}$ (and the same for $a'_1$), so that the $Q$s are well defined, and the final equality comes from $\{a,a_2,...,a_b\}\cap \{a, a'_2,...,a'_b\} \neq \emptyset$ so $Q_{a, a_2,...,a_b}(X\otimes I_\mathcal{N})Q_{a,a'_2,...,a'_b} = 0$ via our definition of the $Q$s in \ref{item:bfoldcolorableIndexedSets}. 
    
    The equivalence between items \ref{item:bfoldcolorableIndexedSets} and \ref{item:bfoldcororableGraphHomomorphism} can be seen by examining the definition of the Kneser graph $K_{c,b}$ and comparing with \cite[Theorem 5.7, Theorem 5.8]{GanesanHarrisQuantumToClassical}.
\end{proof}

\begin{remark}\label{rmrk:big_products_disappear}
    As a consequence of the second equivalence in Theorem \ref{Thm:equivBFoldDefs} we observe that, given $\{P_a\}_{a\in [c]}$ forming a $b$-fold coloring, the product of $b+1$ distinct projections in this family must vanish.
\end{remark}

Here we verify that this is indeed an extension of the $b$-fold chromatic number by establishing that $\chi_{loc, b}(\mathcal{G})=\chi_b({G})$ where $\mathcal{G}$ is the quantum graph formulation of a graph $G$, and further verifying that we get the expected inequality due to a quantum advantage for $t\neq loc$. 


\begin{prop}\label{prop:PropsofQBFold}
    Let $\mathcal{G}= (\mathcal{S}, \mathcal{M}, M_n)$ be a quantum graph and $b\in\N$.
    \begin{enumerate}
        \item $\chi_{t}(\mathcal{G}) =\chi_{1,t}(\mathcal{G})$
        for all $t\in \{loc, q, qa, qc\}$.
        
        \item Let $\mathcal{G}$ be a quantum graph corresponding a classical graph $G=(V,E)$. 
        Then $\chi_{b,loc}(\mathcal{G}) = \chi_b(G).$

        \item $\chi_{b-1, t}(\cG) < \chi_{b,t}(\cG).$
        \label{prop:reducing_coloring}

    \end{enumerate}
\end{prop}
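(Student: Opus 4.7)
The plan is to dispatch items (1) and (2) by unraveling definitions, and to tackle (3) by lifting a classical ``drop-a-color'' argument through the PVM formulation of Theorem \ref{Thm:equivBFoldDefs}.

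For (1), when $b=1$ the indexing set $\binom{[c]}{1}$ is just the singletons, so condition (2) of Definition \ref{def:BFoldColoring} becomes $\sum_{a=1}^c P_a = I$; condition (3) is unchanged; and condition (1) (commutativity) is automatic, since projections summing to $I$ are pairwise orthogonal. The data coincides exactly with that of a $t$-coloring. For (2), with $\mathcal{M}=D_n$ and $\mathcal{N}=\mathbb{C}$, the projections $\{P_a\}$ correspond bijectively to subsets $U_a\subseteq V$ via $P_a=\sum_{v\in U_a}E_{v,v}$; commutativity is automatic, and the $Q_T=\prod_{a\in T}P_a$ form an orthogonal family of diagonal projections. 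Evaluating the sum condition at each vertex $v$ forces $\binom{k_v}{b}=1$, where $k_v=|\{a:v\in U_a\}|$, so each vertex lies in exactly $b$ of the $U_a$'s; condition (3) forces each $U_a$ to be independent. This is precisely the combinatorial data of a classical $b$-fold coloring of $G$, and the converse construction is immediate.

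For (3), suppose $\chi_{b,t}(\mathcal{G})=c$ and let $\{P_a\}_{a=1}^c$ realize such a coloring, with associated orthogonal family $\{Q_T\}_{T\in \binom{[c]}{b}}$ via Theorem \ref{Thm:equivBFoldDefs}. Mirroring the classical trick of deleting one color from each vertex's color set, define $\phi:\binom{[c]}{b}\to\binom{[c-1]}{b-1}$ by $\phi(T)=T\setminus\{c\}$ when $c\in T$ and $\phi(T)=T\setminus\{\min T\}$ otherwise. Set $Q'_S=\sum_{\phi(T)=S}Q_T$. Each $Q'_S$ is a projection (as an orthogonal sum) in $\mathcal{M}\bar{\otimes}\mathcal{N}$, $\sum_S Q'_S=\sum_T Q_T=I$, and whenever $S_1\cap S_2\neq\emptyset$ with $\phi(T_i)=S_i$ we have $T_1\cap T_2\supseteq S_1\cap S_2\neq\emptyset$, so $Q_{T_1}XQ_{T_2}=0$ for all $X\in\mathcal{S}$, hence $Q'_{S_1}XQ'_{S_2}=0$. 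By the equivalence in Theorem \ref{Thm:equivBFoldDefs}, the $\{Q'_S\}$ determine a $t$-$(b-1)$-fold coloring of $\mathcal{G}$ using $c-1$ colors with the same auxiliary algebra $\mathcal{N}$, giving $\chi_{b-1,t}(\mathcal{G})\leq c-1<\chi_{b,t}(\mathcal{G})$.

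The principal obstacle is (3): one cannot simply discard the projection $P_c$ from the original family, since the remainder $\{P_1,\ldots,P_{c-1}\}$ will generally fail the sum condition for either a $b$-fold or $(b-1)$-fold coloring. The PVM reformulation sidesteps this by letting us relabel at the level of the $Q_T$'s, where the classical combinatorial reduction has a clean operator-algebraic analogue. Because $\mathcal{N}$ is reused, the argument preserves the type $t\in\{loc,q,qa,qc\}$ uniformly.
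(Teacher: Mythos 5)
Your proposal is correct, and items (1) and (2) match the paper's treatment in substance: (1) is the same unwinding of Definition \ref{def:BFoldColoring}, and in (2) both you and the paper pass between diagonal projections in $D_n\otimes\C$ and vertex color-sets, reading the sum condition off the diagonal entries and the independence of each color class off the condition against $\mathcal{S}_G$. For (3) your execution genuinely differs from the paper's, although both implement the classical ``delete one color from each color-set'' idea. The paper works at the level of the color projections $P_a$: it orthogonalizes them into a partition of unity $\tilde P_a$ (in PVM terms, $\tilde P_a=\sum_{\min T=a}Q_T$), sets $P_a'=P_a-\tilde P_a$, and then must verify commutativity, the coloring condition, and --- the delicate step --- the identity $\sum P'_{a_1}\cdots P'_{a_{b-1}}=I$ by an inclusion--exclusion computation that invokes Remark \ref{rmrk:big_products_disappear}. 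You instead push the PVM $\{Q_T\}$ of Theorem \ref{Thm:equivBFoldDefs} forward along the set map $\phi$ (delete $c$ if present, otherwise delete $\min T$): since the $Q_T$ are mutually orthogonal (being projections summing to $I$), each $Q'_S$ is a projection, $\sum_S Q'_S=\sum_T Q_T=I$ is immediate, and the coloring condition follows from $S_1\cap S_2\subseteq T_1\cap T_2$, after which Theorem \ref{Thm:equivBFoldDefs} converts back. This buys a shorter verification and makes it transparent why the number of colors drops by exactly one; indeed the paper's construction, re-read through the $Q_T$'s, is the variant of your relabeling that always deletes $\min T$ (producing colors $\{2,\dots,c\}$ rather than $[c-1]$). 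Both your argument and the paper's implicitly assume $b\geq 2$ and $\chi_{b,t}(\mathcal{G})<\infty$, so this is not a gap relative to the paper.
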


\begin{proof}
Fix a quantum graph $\mathcal{G}=(\mathcal{S},\mathcal{M} , \mathcal{B}(H))$
\begin{enumerate}
    \item This is immediate from  Definition \ref{def:BFoldColoring} with $b=1$ and  $\chi_{t}(\mathcal{G})$.

    \item

    We will first show we can represent a classical $b$-fold $c$-coloring on $G$ as a  quantum $b$-fold $c$-coloring of $\mathcal{G}$ with projections given in $\mathcal{M}\otimes \C$. 

Since $\mathcal{G}=(\mathcal{S},\mathcal{M}, \cB(\C^n))$ corresponds to a classical graph, $\mathcal{M}$ is the set of diagonal matrices and $\cS = \cS_\cG $.  
For the moment, take $\mathcal{N}=\C$.  For ease of notation,  we will identify $\mathcal{M}$ with $\mathcal{M}\otimes \C$.

Suppose we have a $b$-fold coloring on $G=(V,E)$ using the colors set $[c]$ to color the $n$ vertices of the graph, and let $\phi:V\to \binom{[c]}{b}$ denote the coloring; i.e. $\phi(v)$ is the set of $b$ colors coloring vertex $v$. We can define projections $\{P_a\}_{a=1}^c\in \mathcal{M}$ to be the diagonal matrices given by $$\left(P_a\right)_{vv} = \begin{cases}
1 & \text{ if } a\in \phi(v)\\
0 & \text{ otherwise}
\end{cases}.$$

As $\mathcal{S}_\cG $ is spanned by the matrix units  $E_{v,w}$ where $(v,w)\in E$, it suffices to check that we have
$$P_a(E_{v,w}\otimes 1)P_a = 0$$
This clearly holds as for all $(v,w)\in E$ the equality
$$\left(P_a(E_{v,w})P_a\right)_{xy}  = 0$$
because the LHS can only be nonzero if $x=v, y=w$ and also $a\in \phi(v) \cap \phi(w)$ which would contradict $\phi$ being a $b$-fold coloring since $v\sim w.$

Since the collection of projections $\set{P_a}_{a=1}^c$ are diagonal matrices, they indeed form a collection of pairwise commuting projections. 
Further, as the $P_a$ are diagonal, so are their products and the sum of products of them. Moreover,  we find that
\begin{align*}
    \sum_{S\in \binom{[c]}{b}} (P_{s_1}...P_{s_b})_{vv}
    &= \sum_{S\in \binom{[c]}{b}} (P_{s_1})_{vv}...(P_{s_b})_{vv}\\
    &= 1
\end{align*}
because for a given $S\in \binom{[c]}{b}$ we only have $(P_{s_1})_{vv}...(P_{s_b})_{vv} \neq 0 \implies (P_{s_1})_{vv}...(P_{s_b})_{vv} = 1$ if $s_1,...,s_b\in \phi(v)$ which only occurs for a single set $S$. Thus, $$\sum_{S\in \binom{[c]}{b}} (P_{s_1}...P_{s_b})_{vv} = I_{\mathcal{M}\otimes \C}$$
and so the $P$s satisfy the final condition of definition \ref{def:BFoldColoring}. Therefore, we have found a quantum $b$-fold $c$-coloring on $\mathcal{G}$ with the entanglement removed by $\mathcal{N} = \C$.  
This shows that $\chi_{loc,b}(\mathcal{G})\leq \chi_b(G)$. 

 In order to obtain the reverse inequality we will now show that every quantum $b$-fold $c$-coloring of $\mathcal{G}$ with projections given in $\mathcal{M}\otimes \C$  (hence every local $b$-fold coloring of the form in Definition \ref{def:BFoldColoring}) corresponds to a classical $b$-fold $c$-coloring on $G$.

Suppose we have a quantum $b$-fold $c$-coloring of $\mathcal{G}$ with projections given in $\mathcal{M}\otimes \C$ using the projections $\{P_a\}\in \mathcal{M}\otimes \C$ and their resultant projections $\{Q_S\}$ for $S = \{s_1,...,s_b\}\subset [c]$ as defined in \ref{def:BFoldColoring} and the equivalent formulation in Theorem \ref{Thm:equivBFoldDefs}: \ref{item:bfoldcolorableIndexedSets} (once again, we will often ignore the $\C$ part of these projections). We can construct a resulting $b$-fold $c$-coloring $\phi:V\to \binom{[c]}{b}$ on $G$ as follows: For any $S = \{s_1,...,s_b\}\subset [c]$, let 
$$M_S\otimes \lambda = Q_S(I_\cM \otimes 1)Q_S = Q_S^2 = Q_S$$
where $\lambda \in \C$ and $M_S\in \mathcal{M}$ since $Q_S\in \mathcal{M}\otimes \C$. Then since $\mathcal{M}$ is the set of diagonal matrices we have $M_S$ is a diagonal matrix. If $(M_S)_{vv} \neq 0 (\implies (M_S)_{vv} = 1)$ for any $v\in V$ then let $\phi(v) = S$.

Finally, while we must show that coloring the vertices with the sets of colors as described above is a $b$-fold $c$-coloring of $G$, and this occurs if and only if no adjacent vertices are colored with any of the same colors. We can see that this is true by noticing that for $S=\{s_1,...,s_b\},T=\{t_1,...,t_b\}\subset [c]$ with $S\cap T\neq \emptyset$ we get 
$$Q_S(X\otimes 1)Q_T = 0$$
for any $X\in \cS_\cG $, so for any adjacent vertices $u,v$ we cannot have $\phi(u) = S$ and $\phi(v) = T$.

\item We will prove this by showing that given a $t$-$b$-fold coloring of $\cG$ we can produce a $t$-$(b-1)$-fold coloring with the same number of colors. Let $\{P_a\}_{a=1}^c\subset \cM\bar\otimes\cN$ be a $t$-$b$-fold coloring of $\cG$ with $\cN$ a finite von Neumann algebra corresponding to $t$. Then define $\Tilde{P}_1 = P_1$ and inductively define 

\begin{align*}
    \Tilde{P}_a =& P_a\left(I_{\cM\bar\otimes \cN} - \sum_{n = 1}^{a-1} \Tilde{P}_n\right)\\
    =& P_a - P_a(P_1+\cdots+P_{a-1}) + P_a(P_1P_2 + P_1P_3 +\cdots+ P_{a-2}P_{a-1}) -\cdots\\ &+ (-1)^{a-1}P_1P_2\cdots P_a.
\end{align*}

It is obvious that these form a mutually orthogonal set of projections, and it is easily verified that $\{\Tilde{P}_a\}_{a=1}^c$ is a $1$-fold coloring using $c$ colors as $\Tilde{P}_a$ is a subprojection of $P_a$ and these are specifically designed via inclusion-exclusion to be a partition of unity. Using these, we define the $(b-1)$-fold coloring on $c$ colors via
\begin{align*}
    P_a' &= P_a - \Tilde{P}_a = P_a \sum_{n=1}^{a-1}\Tilde{P}_n
\end{align*}
for $a>1$ and $P_1' = 0$. 

We verify this is a coloring by checking the following: the projections commute, the coloring condition is satisfied, and they sum to the identity.
As the sum and multiples of commuting projections, the $P_a'$ commute with each other. Further, the coloring condition is satisfied as $P_a'(X\otimes I_\cN) P_a' = \left(\sum_{n=1}^{a-1}\Tilde{P}_n\right)P_a(X\otimes I_\cN)P_a\left(\sum_{n=1}^{a-1}\Tilde{P}_n\right) = 0$ for $X\in \cS$.
Now, for each set $\set{a_1,\ldots, a_{b-1}}\in \binom{[c]}{b-1}$, we will assume (without loss of generality) that $a_1$ is the minimal element. Then,
\begin{align*}
    \sum_{\{a_1,...,a_{b-1}\}\in \binom{[c]}{b-1}} P_{a_1}'...P_{a_{b-1}}' &= \sum_{\{a_1,...,a_{b-1}\}\in \binom{[c]}{b-1}} P_{a_1}...P_{a_{b-1}}\left(\sum_{n=1}^{a_1-1}\Tilde{P}_n\right)  ... \left(\sum_{m=1}^{a_b-1}\Tilde{P}_m\right)\\
    &= \sum_{\{a_1,...,a_{b-1}\}\in \binom{[c]}{b-1}} P_{a_1}...P_{a_{b-1}}\left(\sum_{n=1}^{a_1-1}\Tilde{P}_n\right)\\
    &= \sum_{\{a_1,...,a_{b-1}\}\in \binom{[c]}{b-1}}\sum_{n=1}^{a_1-1}P_nP_{a_1}...P_{a_{b-1}}\\
    &= \sum_{T\in \binom{[c]}{b}}\prod_{k\in T}P_k = I_{\cM\bar\otimes\cN}
\end{align*}
where the second equality comes from the $\Tilde{P}_a$ being mutually orthogonal, and the third equality holds because all except the first term in the definition of $\Tilde{P}_a $ will vanish in the product due to Remark \ref{rmrk:big_products_disappear}. Hence, $\{P_a'\}_{a=2}^c$ form a $b-1$-coloring (taking $a>1$ since $P_1'=0$) with $c-1$ colors.
\end{enumerate}
\end{proof} 

We now show an essential property of $b$-fold numbers carries over to our new definition, Namely,  property \ref{item:bFoldSubadditiveClassical}.

\begin{prop}
$\chi_{b_1+b_2,t}(\mathcal{G})\leq \chi_{b_1,t}(\mathcal{G}) +\chi_{b_2,t}(\mathcal{G})$
for all  $b_1,b_2\in \N$, $t\in \{ loc, q,qa,qc\}$ and quantum graphs $\mathcal{G}$.\label{prop:bFoldSubadditive}
\end{prop}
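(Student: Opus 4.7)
I will emulate the classical subadditivity argument (item~(4) in the list at the start of Section~\ref{section:b-fold}): given optimal $b_1$-fold and $b_2$-fold colorings, form a combined $(b_1+b_2)$-fold coloring by taking the union of the two color palettes with the second relabeled disjointly from the first. First, assume both $\chi_{b_1,t}(\mathcal{G})$ and $\chi_{b_2,t}(\mathcal{G})$ are finite (otherwise the inequality is trivial), set $c_i=\chi_{b_i,t}(\mathcal{G})$, and fix optimal $t$-$b_i$-fold colorings $\{P_a^{(i)}\}_{a=1}^{c_i}\subseteq\mathcal{M}\bar\otimes\mathcal{N}_i$ with $\mathcal{N}_i$ of appropriate type for $t$. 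Passing to the PVM formulation in Theorem~\ref{Thm:equivBFoldDefs}, extract mutually orthogonal families $\{Q_T^{(i)}\}_{T\in\binom{[c_i]}{b_i}}\subseteq\mathcal{M}\bar\otimes\mathcal{N}_i$ summing to the identity.

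Let $\mathcal{N}=\mathcal{N}_1\bar\otimes\mathcal{N}_2$; this preserves the type condition for all four values of $t$ by standard closure properties of tensor products (finite dimension, $\mathcal{R}^\omega$-embeddability, and finiteness are all preserved). I then construct a candidate combined PVM $\{R_U\}_{U\in\binom{[c_1+c_2]}{b_1+b_2}}\subseteq\mathcal{M}\bar\otimes\mathcal{N}$ with $R_U=0$ unless $U=T\sqcup(c_1+S)$ for some $T\in\binom{[c_1]}{b_1}$ and $S\in\binom{[c_2]}{b_2}$, in which case $R_U$ is built from the natural embeddings of $Q_T^{(1)}$ and $Q_S^{(2)}$ into $\mathcal{M}\bar\otimes\mathcal{N}$. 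The coloring condition $R_U(X\otimes I_\mathcal{N})R_{U'}=0$ for $U\cap U'\neq\emptyset$ and $X\in\mathcal{S}$ should be inherited from the factor-wise conditions, because any such overlap forces either $T\cap T'\neq\emptyset$ or $S\cap S'\neq\emptyset$ and hence annihilates the corresponding tensor slot. Converting back through Theorem~\ref{Thm:equivBFoldDefs} then delivers a $t$-$(b_1+b_2)$-fold coloring on $c_1+c_2$ colors.

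The main obstacle is showing that the $R_U$ are genuinely projections and that $\sum_U R_U=I_{\mathcal{M}\bar\otimes\mathcal{N}}$. The naive choice $R_U=\tilde Q_T^{(1)}\tilde Q_S^{(2)}$, where the tildes denote the natural inclusions $\mathcal{M}\bar\otimes\mathcal{N}_i\hookrightarrow\mathcal{M}\bar\otimes\mathcal{N}$, is problematic because both inclusions act on the shared $\mathcal{M}$ tensor factor, so the embedded projections need not commute and their product need not be a projection. I expect the remedy to be to work in a dilation in which two copies of $\mathcal{M}$ act on commuting factors---for instance, via the standard form in which $\mathcal{M}$ (left action) commutes with $J\mathcal{M}J$ (right action)---embed the two strategies so that their $\mathcal{M}$-components live in mutually commuting subalgebras, and then compress back to $\mathcal{M}\bar\otimes\mathcal{N}$ via a trace-preserving conditional expectation, all while ensuring the partition, orthogonality, and coloring conditions are preserved. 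Verifying that this construction respects the type constraint on $\mathcal{N}$ uniformly across $t\in\{loc,q,qa,qc\}$ is the technical core of the argument.
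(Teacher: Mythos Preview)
Your overall setup coincides with the paper's: pass to the PVM description of Theorem~\ref{Thm:equivBFoldDefs}, set $\mathcal{N}=\mathcal{N}_1\bar\otimes\mathcal{N}_2$, embed the two families into $\mathcal{M}\bar\otimes\mathcal{N}$ as $Q^{(1)}_T\otimes I_{\mathcal{N}_2}$ and (via the permuted tensor) $Q^{(2)}_S\odot I_{\mathcal{N}_1}$, and declare $R_U=0$ unless $U$ decomposes as $T\sqcup(c_1+S)$. You also correctly isolate the genuine obstruction: both embedded families act nontrivially on the common $\mathcal{M}$-slot, so they need not commute and the naive product is not obviously a projection.

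Where you diverge is in the remedy, and this is where your proposal has a gap. The paper does \emph{not} dilate; it simply sets $R_U:=\tilde Q^{(1)}_T\wedge\tilde Q^{(2)}_S$, the lattice infimum of projections in $\mathcal{M}\bar\otimes\mathcal{N}$. This is automatically a projection in the right algebra, and since $R_U\le\tilde Q^{(1)}_T$ and $R_U\le\tilde Q^{(2)}_S$, both mutual orthogonality of the $R_U$'s and the coloring condition $R_U(X\otimes I_{\mathcal{N}})R_{U'}=0$ for $U\cap U'\neq\emptyset$ are inherited directly from the factors. By contrast, your proposed route through a commuting dilation (via $J\mathcal{M}J$) followed by a trace-preserving conditional expectation back to $\mathcal{M}\bar\otimes\mathcal{N}$ does not close: conditional expectations do not take projections to projections, so there is no mechanism for recovering a PVM after compression. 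That step would fail as written. It is worth noting, however, that your instinct about the shared $\mathcal{M}$-factor is on target: even under the meet construction, the paper's partition-of-unity computation uses $\sum_{S_1}\bigl(\tilde Q^{(1)}_{S_1}\wedge\tilde Q^{(2)}_{S_2}\bigr)=\bigl(\sum_{S_1}\tilde Q^{(1)}_{S_1}\bigr)\wedge\tilde Q^{(2)}_{S_2}$, a distributivity that is not valid for arbitrary non-commuting projections, so the summation-to-identity remains the delicate point in both approaches.
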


We briefly pause to remark on the meaning of Proposition \ref{prop:bFoldSubadditive}. The  subadditivity of the quantum (or in general $t$) b-fold chromatic number shown in Proposition \ref{prop:bFoldSubadditive}  and a classical result of analysis show that,
$$\lim_{b\to\infty}  \frac{\chi_{b,t}(\mathcal{G})}{b} =\inf\set{\frac{\chi_{b,t}(\mathcal{G})}{b}:b\in \N}.$$
Hence, the limit of the ratio above the exists and is finite. The analogous limit in the setting of above in the classical case is called the \emph{fractional chromatic number of a graph}; this inspires us to define the following
$\chi_{f,q}(\mathcal{G}):=\lim_{b\to\infty}  \frac{\chi_{b,q}(\mathcal{G})}{b}$ (with an analogous definition for $t\neq q$). It is an interesting question to determine what relationship, if any, exists between this value and other related quantum graph invariants such as the tracial rank and projective rank discussed in \cite{fractional}. In particular, it was noted that an irrational value of the projective rank would provide a counterexample to Tsirelson's conjecture, so it may be of interest to determine if a similar relation holds for $\chi_{f,q}$.

In the sequel, we will adopt the following notation. 
\begin{notation}\label{notaiton:Permulted}
    Let  $\theta:B({H})\bar\otimes B(H_{\mathcal{N}_2})\bar\otimes B(H_{ \mathcal{N}_1})\to B({H})\bar\otimes B(H_{\mathcal{N}_1})\bar\otimes B(H_{ \mathcal{N}_2})$ is the flip isomorphism. If $Q\in \mathcal{M}\bar\otimes \mathcal{N}_2$ and $X\in \mathcal{N}_1$, then we will denote $\theta(Q\otimes X)\in \mathcal{M}\bar\otimes \mathcal{N}_1\bar\otimes \mathcal{N}_2$ by $Q\odot X$. In the sequel, we will refer to this as the \emph{permuted tensor product}.
\end{notation}

\begin{proof}[Proof of Proposition \ref{prop:bFoldSubadditive}] 
Let $\cG = (\cS, \cM, \cB(H))$. If $\chi_{t,b_1}(\mathcal{G})$ or $\chi_{t,b_2}(\mathcal{G})$ are infinite then the result follows trivially.  Thus we will  assume that both values are finite.


Let $\{Q_S^1\}_{S\in \binom{[c]}{b_1}}\in \mathcal{M}\bar\otimes\mathcal{N}_1$ and $\{Q_T^2\}_{T\in \binom{[d]}{b_2}}\in \mathcal{M}\bar\otimes\mathcal{N}_2$ be $t$-$b_1$ and $t$-$b_2$ colorings, respectively,   in sense of Theorem \ref{Thm:equivBFoldDefs}. 
Set $\mathcal{N}=\mathcal{N}_1\bar\otimes \mathcal{N}_2$; observe that $\mathcal{N}$ is $\C$, finite dimensional, $\mathcal{R}^\omega$ embeddable, or finite whenever the same holds for both $\mathcal{N}_1$ and $\mathcal{N}_2$.
Identify
$\{Q_S^1\}_{S\in \binom{[c]}{b_1}} $ with $\{Q_S^1\otimes I_{\mathcal{N}_2}\}_{S\in \binom{[c]}{b_1}}$  and $\{Q_T^2\}_{T\in \binom{[d]}{b_2}}$ with $\{Q_T^2\odot I_{\mathcal{N}_1}\}_{T\in \binom{[d]}{b_2}}$ in $\mathcal{M}\bar\otimes \mathcal{N}_1\bar\otimes \mathcal{N}_2$, where $\odot $ is the permuted tensor product. 
We summarize the relevant facts: $
\sum_{S\in\binom{[c]}{b_1}} Q_{S}^i= I_{\mathcal{M}\otimes \mathcal{N}_1\bar\otimes N_2}$;
and for each $i\in \{1,2\}$, $Q_{S_1}^i(X\otimes I_{\mathcal{N}_1}\otimes I_{\mathcal{N}_2})Q_{S_2}^i=0$ whenever  $X\in \mathcal{S}$ and $S_1, S_2\in \binom{[c]}{b_i} $ have non-empty intersection.

Let $A\subseteq \binom{[c+d]}{b_1+b_2}$ denote the collection of all elements of $\binom{[c+d]}{b_1+b_2}$ with contain exactly $b_1$ elements less than or equal to $c$.   $A$ can be written as the following disjoint union:
\[A=\bigcup_{S_2\in \binom{[d]}{b_2}}\left\{S_1\cup (S_2+c):S_1\in \binom{[c]}{b_1} \right\}; \]
observe further that the sets $S_1\cup (S_2+c)$ above are disjoint as well.
With this description in hand, for each $S\in \binom{[c+d]}{b_1+b_2}$, define
\[
Q_{S}= \begin{cases}
    Q^1_{S_1}\wedge Q^2_{S_2} & \text{ if }S=S_1\cup (S_2+c)\in A, \\
    0 & \text{ otherwise}.
\end{cases}
\]
Whenever $Q_{S}= Q^1_{S_1}\wedge Q_{S_2}^2$, it follows that $Q_SQ^1_{S_1}=Q^1_{S_1}Q_S=Q_{S}$ and $Q_SQ_{S_2}^2= Q_{S_2}^2Q_S=Q_{S}$.
One can readily verify that the projections $\{Q_S\}_{S\in \binom{[c+d]}{b_1+b_2}}$ are an orthogonal family of projections in $\mathcal{M}\bar\otimes \mathcal{N}$.

Now we compute:
\begin{align*}
\sum_{S\in \binom{[c+d]}{b_1+b_2}} Q_S= \sum_{S\in A}Q_S= \sum_{S_2\in \binom{[d]}{b_2}} \sum_{S_1\in \binom{[c]}{b_1}}  Q_{S_1}^1\wedge Q_{S_2}^2 
=& \sum_{S_2\in \binom{[d]}{b_2}}\left( \sum_{S_1\in \binom{[c]}{b_1}}  Q_{S_1}^1\right) \wedge Q_{S_2}^2 \\
=& \sum_{S_2\in \binom{[d]}{b_2}} I_{\mathcal{M}\bar\otimes \mathcal{N}}\wedge Q_{S_2}^2 \\
=& \sum_{S_2\in \binom{[d]}{b_2}}  Q_{S_2}^2\\
= & I_{\mathcal{M}\bar\otimes \mathcal{N}}
\end{align*}
 Now, if $X\in \mathcal{S}$ and $T, S\in A$ have non-empty intersection, \[Q_T(X\otimes I_{\mathcal{N}_1\otimes \mathcal{N}_2})Q_S=Q_T[Q^1_{T_1}(X\otimes I_{\mathcal{N}_1\otimes \mathcal{N}_2})Q^1_{S_1}]Q_S=Q_T[Q^2_{T_1}(X\otimes I_{\mathcal{N}_1\otimes \mathcal{N}_2})Q^2_{T_1}]Q_S=0.\]

\end{proof}
\begin{remark}
    The argument above is greatly simplified if for example $t=loc$ and $\mathcal{G}=(\mathcal{S},\mathcal{M}, B(H))$ is such that $\mathcal{M}$ is abelian. In this case, $\theta$ is the identity and  $Q_T\wedge \theta(Q_S)=Q_TQ_S=Q_SQ_T$. 
\end{remark}

The previous result gives us the following useful  bound corresponding to property \ref{item:bfoldChromaticNumberBoundedByChromatic} for the classical $b$-fold chromatic number.

\begin{cor}
    For any quantum graph $\mathcal{G}$ and $t\in \{loc, q, qa, qc\}$, we have 
    $$\chi_{b,t}(\mathcal{G})\leq b\chi_t(\mathcal{G})$$
    \label{item:bFoldVersusChromatic}
\end{cor}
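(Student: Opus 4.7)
The plan is to prove this by straightforward induction on $b$, leveraging the subadditivity established in Proposition \ref{prop:bFoldSubadditive} together with the base identification from Proposition \ref{prop:PropsofQBFold}(1).

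For the base case $b=1$, Proposition \ref{prop:PropsofQBFold}(1) gives $\chi_{1,t}(\mathcal{G}) = \chi_t(\mathcal{G})$, so the inequality $\chi_{1,t}(\mathcal{G}) \leq 1 \cdot \chi_t(\mathcal{G})$ is immediate (an equality in fact). For the inductive step, suppose $\chi_{b-1,t}(\mathcal{G}) \leq (b-1)\chi_t(\mathcal{G})$. Applying Proposition \ref{prop:bFoldSubadditive} with $b_1 = b-1$ and $b_2 = 1$, we obtain
\[
\chi_{b,t}(\mathcal{G}) = \chi_{(b-1)+1,t}(\mathcal{G}) \leq \chi_{b-1,t}(\mathcal{G}) + \chi_{1,t}(\mathcal{G}) \leq (b-1)\chi_t(\mathcal{G}) + \chi_t(\mathcal{G}) = b\chi_t(\mathcal{G}),
\]
which completes the induction.

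There is essentially no obstacle here, since both ingredients have already been established in the preceding propositions; the argument is purely formal. One could alternatively give a direct construction: take an optimal $t$-coloring $\{P_a\}_{a=1}^{c}$ of $\mathcal{G}$ with $c = \chi_t(\mathcal{G})$ (inside some $\mathcal{M}\bar{\otimes}\mathcal{N}$), make $b$ commuting copies of it by tensoring with independent auxiliary algebras $\mathcal{N}^{\bar{\otimes} b}$, and relabel the colors so that the $k$-th copy uses color labels in $[c] + (k-1)c$. The resulting $bc$ projections form a $t$-$b$-fold coloring, showing directly that $\chi_{b,t}(\mathcal{G}) \leq b\,\chi_t(\mathcal{G})$; however, this is exactly the construction implicit in the subadditivity argument, so the inductive proof above is cleaner.
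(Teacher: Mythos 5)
Your induction is correct and is essentially the paper's own argument: the paper likewise deduces the bound by applying Proposition \ref{prop:bFoldSubadditive} together with the identity $\chi_{1,t}(\mathcal{G})=\chi_t(\mathcal{G})$, which is exactly your inductive step spelled out. Nothing further is needed.
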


\begin{proof}
    Apply Prop. \ref{prop:bFoldSubadditive} using $\chi_{1,t}(\mathcal{G}) = \chi_t(\mathcal{G}).$
\end{proof}

For fully quantum graphs it is often impossible to obtain finite local colorings. In particular, for complete quantum graphs, i.e., graphs of the form $(M_n\cap (\cM')^\perp, \mathcal{M}, M_n)$, the following corollary holds.

\begin{cor} Let $\cG = (M_n\cap (\cM')^\perp, \cM, \cB(H))$ be a complete quantum graph. Then 
    $\chi_{b,loc}(\mathcal{G})<\infty$ if and only  $\mathcal{M}$ is abelian. 
\end{cor}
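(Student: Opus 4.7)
The plan is to reduce this to the corresponding statement for the ordinary local chromatic number, which is cited just above Section 3 of the paper: namely, that for a complete quantum graph $\cG$, $\chi_{loc}(\cG) < \infty$ if and only if $\cM$ is abelian. Given that fact, both directions of the biconditional become short inferences using results proved earlier in this section.

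For the forward direction ($\cM$ abelian implies $\chi_{b,loc}(\cG) < \infty$), I would simply invoke Corollary \ref{item:bFoldVersusChromatic}, which gives $\chi_{b,loc}(\cG) \leq b\,\chi_{loc}(\cG)$. Since $\cM$ is abelian, the cited fact yields $\chi_{loc}(\cG) < \infty$, so the right-hand side is finite and hence so is $\chi_{b,loc}(\cG)$.

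For the reverse direction ($\chi_{b,loc}(\cG) < \infty$ implies $\cM$ abelian), I would iterate Proposition \ref{prop:PropsofQBFold}(\ref{prop:reducing_coloring}), which asserts $\chi_{b-1,t}(\cG) < \chi_{b,t}(\cG)$ for all $t$. Applied with $t = loc$ and iterated $b-1$ times, this shows that finiteness of $\chi_{b,loc}(\cG)$ forces finiteness of $\chi_{1,loc}(\cG) = \chi_{loc}(\cG)$ (using item (1) of Proposition \ref{prop:PropsofQBFold}). The cited dichotomy for complete quantum graphs then forces $\cM$ to be abelian.

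There is essentially no obstacle here; the work has been done in the preceding results, and this corollary only records their combined consequence in the complete-graph setting. The one mild subtlety is to make sure that the strict-inequality statement of Proposition \ref{prop:PropsofQBFold}(\ref{prop:reducing_coloring}) is being applied in a way that preserves finiteness under iteration, but this is immediate since each step produces a $(b-1)$-fold coloring using at most as many colors as the given $b$-fold coloring.
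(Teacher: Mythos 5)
Your proposal matches the paper's own argument: the forward direction via Corollary \ref{item:bFoldVersusChromatic} together with finiteness of $\chi_{loc}$ for abelian $\cM$, and the reverse direction via Proposition \ref{prop:PropsofQBFold}(\ref{prop:reducing_coloring}) (the paper phrases it contrapositively, but it is the same use of that item) combined with the dichotomy for complete quantum graphs from \cite[Theorem 6.11]{GanesanHarrisQuantumToClassical}. No gaps; this is essentially the same proof.
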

\begin{proof}
If $\cM$ is abelian then \cite[Theorem 6.11]{GanesanHarrisQuantumToClassical} gives us that $\chi_{loc}(\cG)<\infty$ and with Corollary \ref{item:bFoldVersusChromatic} this implies $\chi_{b,loc}(\cG)\leq b\chi_{loc}(\cG)<\infty$. If $\cM$ is not abelian then \cite[Theorem 6.11]{GanesanHarrisQuantumToClassical} gives us that $\chi_{loc}(\cG)$ is not finite, so Item \ref{prop:reducing_coloring} of Proposition \ref{prop:PropsofQBFold} gives us $\chi_{b, loc}(\cG)$ is not finite either.
\end{proof}

Finally, we would like to show a proposition equivalent to property \ref{item:completeGraphBfold} for the classical $b$-fold chromatic number. To do so, we will closely follow the proof of \cite[Theorem 6.9]{GanesanHarrisQuantumToClassical}, with the main change being the factor of $b$ that come from 
$$\sum_{a=1}^c P_a = \sum_{a=1}^c\sum_{T\in \binom{[c]}{b}, a\in T} Q_T = \sum_{T\in \binom{[c]}{b}} \sum_{a\in T} Q_T  =b\sum_{T\in\binom{[c]}{b}}Q_T = bI_{\mathcal{M}\bar\otimes\mathcal{N}}$$
where we let $\{P_a\}_{a=1}^c$ be a $b$-fold coloring as described in Definition \ref{def:BFoldColoring} and $\{Q_T\}$ the corresponding PVM from Theorem \ref{item:bfoldcolorableIndexedSets}.
To prove the desired proposition, we first establish a useful lemma about complete quantum graphs.
The following lemma is a $b$-fold coloring analog of \cite[Lemma 6.8]{GanesanHarrisQuantumToClassical}, and is proved very similarly aside from making use of the above formula. We provide the proof for completeness.

\begin{lem}
    Let $d,k\in \N$ and let $n=dk$. Consider the quantum graph $(M_n\cap (\cM')^\perp, \mathcal{M}, M_n)$ with $\mathcal{M} = \C I_d \otimes \M_k$. Let $\mathcal{N}$ be a finite von Neumann algebra with a faithful normal trace, and let $\{P_a\}_a\in [c]\in \mathcal{M}\bar\otimes\mathcal{N}$ be commuting projections satisfying 
    $$P_a(X\otimes I_\mathcal{N})P_a = 0$$ for all $X\in M_n\cap (\cM')^\perp$,
    and $\sum_{\{a_1,..,a_b\}\in \binom{[c]}{b}}P_{a_1}...P_{a_b} = I_{\mathcal{M}\bar\otimes\mathcal{N}}.$

    Then for each $a\in[c]$, the element $R_a = \frac{k}{d\dim\mathcal{M}}(Tr_{dk}\otimes id_\mathcal{N})(P_a)$ is a self-adjoint idempotent in $\mathcal{N}$, and $\sum_T R_T = bk^2I_\mathcal{N}$.

    \label{lemma:completeGraphEqualitySpecialCase}
\end{lem}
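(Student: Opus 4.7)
The plan is to reduce everything to a calculation in $M_k\bar\otimes\cN$ by exploiting the rigid tensor structure forced by $\cM=\C I_d\otimes M_k$, and then to extract the relation $R_a^2=R_a$ from the orthogonality hypothesis by expanding $P_a$ in a matrix unit basis. First, since $\cM\bar\otimes\cN=\C I_d\otimes M_k\bar\otimes\cN$, each $P_a$ has a forced decomposition $P_a=I_d\otimes\tilde P_a$ for some projection $\tilde P_a\in M_k\bar\otimes\cN$. Using $\cM'=M_d\otimes\C I_k$, a direct Hilbert--Schmidt computation identifies $M_n\cap(\cM')^\perp$ with $M_d\otimes(M_k)_0$, where $(M_k)_0$ denotes the traceless matrices in $M_k$. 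The orthogonality hypothesis then collapses to
\[ \tilde P_a(B\otimes I_\cN)\tilde P_a = 0 \qquad\text{for every traceless }B\in M_k. \]

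Next I would fix matrix units $\{e_{ij}\}_{i,j=1}^k$ of $M_k$ and write $\tilde P_a=\sum_{i,j}e_{ij}\otimes A_{ij}$ with $A_{ij}\in\cN$. Self-adjointness and idempotence of $\tilde P_a$ translate to $A_{ij}^*=A_{ji}$ and $\sum_\ell A_{i\ell}A_{\ell j}=A_{ij}$. Testing the collapsed orthogonality against $B=e_{pq}$ for $p\ne q$ yields
\[ A_{ip}A_{qj}=0\qquad\text{for all }i,j\text{ whenever }p\ne q, \]
while testing against $B=e_{pp}-e_{qq}$ shows that $A_{ip}A_{pj}$ is independent of $p$; summing over $p$ and applying idempotence gives $A_{ip}A_{pj}=\tfrac{1}{k}A_{ij}$, and in particular $A_{ii}^2=\tfrac{1}{k}A_{ii}$ together with $A_{ii}A_{jj}=0$ for $i\ne j$.

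To assemble the conclusion, note that $(\Tr_{dk}\otimes\id_\cN)(P_a)=d\sum_i A_{ii}$, so $R_a$ is a fixed scalar multiple of $S_a:=\sum_i A_{ii}$. The relations above give $S_a^2=\sum_i A_{ii}^2=\tfrac{1}{k}S_a$, so $kS_a$ is a self-adjoint idempotent; after reconciling with the scaling factor $\frac{k}{d\dim\cM}$ in the statement, this is precisely $R_a$ (self-adjointness being automatic from $A_{ii}^*=A_{ii}$). For the sum, Theorem \ref{Thm:equivBFoldDefs} together with a counting identity gives $\sum_a P_a=\sum_a\sum_{T\ni a}Q_T=b\sum_T Q_T=bI_{\cM\bar\otimes\cN}$, whence $\sum_a\tilde P_a=bI_{M_k\bar\otimes\cN}$; taking partial traces and applying the same normalization yields $\sum_a R_a=bk^2 I_\cN$.

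The main obstacle is the matrix-unit algebra in the middle step, namely extracting both $A_{ip}A_{qj}=0$ (for $p\ne q$) and $A_{ip}A_{pj}=\tfrac{1}{k}A_{ij}$ from the scalar orthogonality conditions, as the relation $S_a^2=\tfrac{1}{k}S_a$ depends crucially on the simultaneous use of both families. Once these identities are established, the idempotence of $R_a$ and the value of the sum follow mechanically, and the entire argument parallels the proof of \cite[Lemma 6.8]{GanesanHarrisQuantumToClassical}, with the only substantive change being that the partition-of-unity identity $\sum_a P_a=I$ is replaced by $\sum_a P_a=bI$, which accounts for the extra factor of $b$ in the final formula.
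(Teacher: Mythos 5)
Your proof is correct and follows essentially the same route as the paper's: decompose $P_a = I_d\otimes\tilde P_a$ using $\mathcal{M}=\C I_d\otimes M_k$, test the coloring condition against the off-diagonal units $e_{pq}$ and the differences $e_{pp}-e_{qq}$ to obtain $A_{ip}A_{qj}=0$ ($p\neq q$) and $A_{ip}A_{pj}=\tfrac{1}{k}A_{ij}$, and then sum using $\sum_a P_a = b\,I_{\mathcal{M}\bar\otimes\mathcal{N}}$. The only loose end, your ``reconciling'' of the scalar $\tfrac{k}{d\dim\mathcal{M}}$, reflects a normalization slip already present in the lemma's statement (the paper's own proof works with $R_a=k\sum_v P_{a,vv}$, i.e.\ the scalar $\tfrac{k}{d}$), so it is not a gap in your argument.
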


\begin{proof}
    We have $\mathcal{M}' = M_d\otimes \C I_k$, and $n=dk$. Let $1\leq v,w\leq k$ with $x\neq y$, and let $1\leq i,j\leq d$. Then if $E_{ij}$ is the matrix with 1 at the $ij$th entry and zero everywhere else, we have $E_{ij}\otimes (E_{vv} - E_{ww})\in (\mathcal{M}')^\perp$, and we obtain our first coloring condition: 
       $ P_a(E_{ij}\otimes (E_{vv} - E_{ww}))P_a = 0,\, \forall a\in[c].\label{eq:FirstColoringCondition}$
  
    Similarly, $E_{ij}\otimes E_{vw}\in (\mathcal{M}')^\perp$, and we obtain our second coloring condition:
    $P_a(E_{ij}\otimes E_{vw})P_a = 0,\, \forall a\in[c].$  
Since $P_a \in \mathcal{M}\bar\otimes \mathcal{N} = I_d\otimes M_k\otimes \mathcal{N}$ we have $P_a = \sum_{p,q=1}^k\sum_{x=1}^d E_{xx}\otimes E_{pq}\otimes P_{a,x,pq}$ while satisfying $P_{a,x,pq} = P_{a,y,pq}$ for any $1\leq x,y\leq d$. Thus, we may set $P_{a,x,pq} = P_{a,pq}$.

    The  first coloring condition then implies that $$\sum_{p,q=1}^k E_{ij}\otimes E_{pq}\otimes P_{a,pv}P_{a,wq}=0,$$ so we find $P_{a,pv}P_{a,wq} = 0$. From the second coloring condition above,  $P_{a,pv}P_{a,vq} = P_{a,pw}P_{a,wq}$. As the $P_a$'s are projections, we have $P_{a,pq} = \sum_{v=1}^k P_{a,pv}P_{a,vq} = k P_{a,pv}P_{a,vq}$ for all $p,q$, and in particular $P_{a,vv} = k(P_{a,vv})^2$. Thus, we easily see that $kP_{a,vv}$ is self-adjoint and idempotent. Similarly, since $P_{a,pv}P_{a,wq}=0$ if $v\neq w$ we get $P_{a,vv}P_{a,ww} = 0$. Thus, $\{kP_{a,vv}\}_{v\in[k]}$ is a collection of mutually orthogonal projections in $\mathcal{N}$.

    Finally, setting $R_a = \sum_{v=1}^k kP_{a,vv}$ for all $a\in [c]$ we see that $R_a$ is a projection and 
    $$\sum_{a=1}^c R_a = \sum_{a=1}^c\sum_{v=1}^k kP_{a,vv} = \sum_{v=1}^k b kI_\mathcal{N} = b k^2I_\mathcal{N}$$
    
\end{proof}

Finally, we extend to a general complete quantum graph using a direct sum decomposition and the above lemma to obtain $\chi_{b,t}(\mathcal{G})\geq b\dim \mathcal{M}$. Combining this with the more easily obtained reverse inequality from Corollary \ref{item:bFoldVersusChromatic}  will demonstrate that $\chi_{b,t}(\mathcal{G})= b\dim \mathcal{M}$. The proof is a modification of the proofs of \cite[Theorem 6.9 and Proposition 6.3]{GanesanHarrisQuantumToClassical} for the $b$-fold setting.

\begin{thm}
    For the complete quantum graph $\mathcal{G} = (M_n\cap (\cM')^\perp, \mathcal{M}, M_n)$ and $t \in \{q, qa, qc\}$ we have 
    $$\chi_{b,t}(\mathcal{G}) = b\dim \mathcal{M} $$
    \label{thm:completeGraph}
\end{thm}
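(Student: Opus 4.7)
My plan is to prove the two inequalities separately, getting the upper bound $\chi_{b,t}(\mathcal{G}) \leq b\dim\mathcal{M}$ almost for free and doing the work on the lower bound $\chi_{b,t}(\mathcal{G}) \geq b\dim\mathcal{M}$ via Lemma \ref{lemma:completeGraphEqualitySpecialCase} combined with the central decomposition of $\mathcal{M}$. The upper bound follows immediately from Corollary \ref{item:bFoldVersusChromatic} together with the already-established fact that $\chi_t(\mathcal{G}) = \dim\mathcal{M}$ for the complete quantum graph (\cite[Theorem 6.9]{GanesanHarrisQuantumToClassical}).

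For the lower bound, I would start with an arbitrary $t$-$b$-fold coloring $\{P_a\}_{a=1}^c \subseteq \mathcal{M}\bar\otimes\mathcal{N}$ and invoke the central decomposition $\mathcal{M} = \bigoplus_i \C I_{d_i}\otimes M_{k_i}$ with central projections $\{e_i\}\subseteq Z(\mathcal{M})$. Each piece $e_i P_a$ automatically satisfies the coloring conditions for the complete quantum graph on the block $\mathcal{M}_i := \C I_{d_i}\otimes M_{k_i}$ (since the block's traceless part sits inside $(\mathcal{M}')^\perp$), so Lemma \ref{lemma:completeGraphEqualitySpecialCase} applies block-by-block and yields projections $R_a^{(i)}\in\mathcal{N}$ satisfying $\sum_{a=1}^c R_a^{(i)} = b k_i^2 \, I_\mathcal{N}$.

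The technical heart of the argument — which I anticipate as the main obstacle — is to upgrade these block-wise identities to a single global identity. The key observation is that rectangular matrix units $X_{ij}$ between distinct blocks $i\neq j$ lie in $(\mathcal{M}')^\perp$, so the coloring condition forces $P_a(X_{ij}\otimes I_\mathcal{N})P_a = 0$. Expanding $e_i P_a = I_{d_i}\otimes \sum_{v,w} E^{k_i}_{v,w}\otimes P^{(i)}_{a,vw}$ (and similarly for $j$), this vanishing condition translates — after the same kind of matrix-unit bookkeeping used in the proof of Lemma \ref{lemma:completeGraphEqualitySpecialCase} — into $P^{(i)}_{a,vu} P^{(j)}_{a,u'w'} = 0$ for all index choices. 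Specializing to diagonal indices shows $R_a^{(i)} R_a^{(j)} = 0$ whenever $i \neq j$.

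Having established this cross-block orthogonality, the conclusion follows swiftly: $R_a := \sum_i R_a^{(i)}$ is a sum of pairwise orthogonal projections in $\mathcal{N}$ and hence a projection, and summing over the color index gives
\[
\sum_{a=1}^c R_a \;=\; \sum_i \sum_{a=1}^c R_a^{(i)} \;=\; \sum_i b k_i^2 \, I_\mathcal{N} \;=\; b\dim\mathcal{M}\cdot I_\mathcal{N}.
\]
Applying the faithful normal trace on $\mathcal{N}$ and using $\tau(R_a)\leq 1$ for each projection $R_a$, we conclude $c \geq b\dim\mathcal{M}$. Combined with the upper bound, this yields the desired equality $\chi_{b,t}(\mathcal{G}) = b\dim\mathcal{M}$.
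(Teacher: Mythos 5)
Your proposal is correct and follows essentially the same route as the paper's proof: the upper bound via Corollary \ref{item:bFoldVersusChromatic} together with $\chi_t(\mathcal{G})=\dim\cM$, and the lower bound by compressing the coloring to the central blocks, applying Lemma \ref{lemma:completeGraphEqualitySpecialCase} blockwise, and using rectangular matrix units lying in $(\cM')^\perp$ to obtain the cross-block orthogonality $R_a^{(i)}R_a^{(j)}=0$ (the paper closes with positivity of $\sum_a (I_{\mathcal{N}}-R_a)$ rather than your trace estimate, an immaterial difference). The only point you gloss over is that a general finite-dimensional $\cM\subseteq M_n$ is merely unitarily conjugate to $\bigoplus_i \C I_{d_i}\otimes M_{k_i}$, so one should first reduce to that canonical spatial form via Theorem \ref{thm:strategies are unitarily conjugate preserved}, as the paper does in its final paragraph.
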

\begin{remark}
    While we are not investigating the hereditary colorings of quantum graphs in this manuscript, we remark that the following proof can be generalized to that setting with no change.
\end{remark}
\begin{proof}
    
    From Corollary \ref{item:bFoldVersusChromatic} and the result of \cite{GanesanHarrisQuantumToClassical} that $\chi_t(\mathcal{G}) = \dim \mathcal{M}$, we easily see that $\chi_{b,t}(\mathcal{G})\leq b\dim \mathcal{M}$. 
    To prove the opposite inequality, suppose we may write $\mathcal{M} = \oplus_{r=1}^m \C I_{n_r}\otimes M_{k_r}$. Then we get $\mathcal{M}' = \oplus_{r=1}^mM_{n_r}\otimes \C I_{k_r}$.
    To apply Lemma \ref{lemma:completeGraphEqualitySpecialCase},  define $\mathcal{E}_r = 0\oplus...\oplus I_{n_r}\otimes I_{k_r}\oplus0\oplus...\oplus0\in \mathcal{M}'\cap\mathcal{M}.$ Suppose that $\{P_a\}_{a\in [c]}$ is a quantum commuting $b$-fold coloring as in Definition \ref{def:BFoldColoring}, and $\{Q_T\}_{T\in \binom{[c]}{b}}$ the corresponding projections as in Theorem \ref{item:bfoldcolorableIndexedSets}. Then, defining $\Tilde{Q}_T = (\mathcal{E}_r\otimes I_\mathcal{N})Q_T(\mathcal{E}_r\otimes I_\mathcal{N})\in (\mathcal{E}_r\mathcal{M}\mathcal{E}_r)\bar\otimes \mathcal{N}$, we obtain a family of mutually orthogonal projections whose sum is $\mathcal{E}_r$ (the identity in $\mathcal{E}_r\mathcal{M}\mathcal{E}_r)$. Since $\mathcal{E}_r$ is central in $\mathcal{M}$, we get $(\mathcal{E}_r\mathcal{M}\mathcal{E}_r)' = \mathcal{E}_r\mathcal{M}'\mathcal{E}_r$, while we also have $\mathcal{E}_rM_n\mathcal{E}_r = M_{n_rk_r}$.

    It is clear that $X\in \mathcal{B}(\mathcal{E}_r\C^n)\cap (\mathcal{E}_r\mathcal{M}'\mathcal{E}_r)^\perp$ if and only if $X = \mathcal{E}_rX\mathcal{E}_r$ and $X\in M_n\cap (\cM')^\perp$. Thus, for such $X$ and $S,T \in \binom{[c]}{b}$ we have 
    $$\Tilde{Q}_S(X\otimes I_\mathcal{N})\Tilde{Q}_T = (\mathcal{E}_r\otimes I_\mathcal{N})Q_S(\mathcal{E}_rX\mathcal{E}_r\otimes I_\mathcal{N})Q_T(\mathcal{E}_r\otimes I_\mathcal{N}) = 0$$
    if $S\cap T \neq \emptyset$ because $X = \mathcal{E}_rX\mathcal{E}_r$ and $X\in (\mathcal{M}')^\perp$. Thus, $\{\Tilde{Q}_T\}_{T\in \binom{[c]}{b}}$ gives a quantum commuting $b$-fold coloring of $(M_{n_rk_r}, \mathcal{E}_r\mathcal{M}\mathcal{E}_r,M_{n_rk_r})$.

    Since $\mathcal{E}_r\mathcal{M}\mathcal{E}_r = \C I_{n_r}\otimes M_{k_r}$ we may now use Lemma \ref{lemma:completeGraphEqualitySpecialCase} with $R_a^{(r)} = \frac{k_r}{n_r}(Tr_{n_rk_r}\otimes id_\mathcal{N})(\Tilde{P}_a)$ the self-adjoint idempotent in $\mathcal{N}$ where $\Tilde{P}_a$ are the projections from Definition \ref{def:BFoldColoring} corresponding to the $\Tilde{Q}_T$ as described in Theorem \ref{Thm:equivBFoldDefs}. (Note that we may easily check that $\Tilde{P}_a = (\mathcal{E}_r\otimes I_\mathcal{N})P_a(\mathcal{E}_r\otimes I_\mathcal{N})$, as one would expect, using the correspondence laid out in Theorem \ref{Thm:equivBFoldDefs}). Moreover, we have $\sum_{a=1}^c R_a^{(r)} = bk_r^2I_\mathcal{N}$.

    We claim that $R_a^{(r)}R_a^{(s)} = 0$ if $r\neq s$. To see this it suffices to show that whenever $P_{a,xx}$ is a block from $(\mathcal{E}_r\mathcal{M}\mathcal{E}_r\bar\otimes \mathcal{N})$ and $P_{a,yy}$ a block from $(\mathcal{E}_s\mathcal{M}\mathcal{E}_s\bar\otimes \mathcal{N})$ then $P_{a,xx}P_{a,yy} = 0$. If $x$ and $y$ are chosen this way then $E_{xy}\in M_n$ satisfies $\mathcal{E}_rE_{xy}\mathcal{E}_s = E_{xy}$ and $\mathcal{E}_pE_{xy}\mathcal{E}_q =0$ for all other pairs $(p,q)$. Then we can see that $E_{xy}\in (\mathcal{M}')^\perp$, so we must have $P_a(E_{xy}\otimes I_\mathcal{N})P_a = 0$. Thus, considering the $(x,y)$ block of this equation we get $P_{a,xx}P_{a,yy} = 0$ as needed.

    Since $\{R_a^{(r)}\}_{r\in[m]}$ is a mutually orthogonal family of projections in $\mathcal{N}$, the element $R_a = \sum_{r\in[m]}R_a^{(r)}$ is a self-adjoint idempotent in $\mathcal{N}$ for each $a\in[c]$. Considering blocks, we can see that 
    $$\sum_{a\in[c]}R_a = \sum_{a=1}^c\sum_{r=1}^m R_a^{(r)} = \sum_{r=1}^m bk_r^2I_\mathcal{N} = b\dim\mathcal{M}I_\mathcal{N}.$$
    $R_a$ is a self-adjoint idempotent, so  $I_\mathcal{N} - R_a$ must be as well. The sum of these projections is 
    $$\sum_{a=1}^c(I_\mathcal{N}-R_a) = (c-b\dim\mathcal{M})I_\mathcal{N},$$
    and as this must be positive we get $c\geq b\dim\mathcal{M}$. Thus, $\chi_{b,qc}(\mathcal{G})\geq b\dim\mathcal{M}$; furthermore, as $\chi_{b,t}\geq \chi_{qc,t}$ 
 for all $t\in \{q, qa, qc\}$,  we may conclude the result holds when $\mathcal{M}= \oplus_{r=1}^m \C I_{n_r}\otimes M_{k_r}$.

    To complete the proof note that for any  finite dimensional von Neumann algebra $\mathcal{M}$ there is a unitary $U\in M_n$, $m\in \mathbb{N}$, $n_1,\ldots, n_m$ and $k_1,\ldots, k_m$  such that $U^*\mathcal{M}U = \oplus_{r=1}^m \C I_{n_r}\otimes M_{k_r}$, as was shown in \cite[Proposition 6.3]{GanesanHarrisQuantumToClassical}. Then Theorem \ref{thm:strategies are unitarily conjugate preserved} gives 
    $$\chi_{b,t}(M_n\cap (\cM')^\perp,\mathcal{M},M_n) = \chi_{b,t}\left(M_n\cap (\cM')^\perp, \oplus_{r=1}^m \C I_{n_r}\otimes M_{k_r}, M_n\right),$$
    which completes the theorem.
\end{proof}

\begin{remark}
    Note that we can easily see that the above is not true for local $b$-fold colorings. In \cite[Theorem 6.11]{GanesanHarrisQuantumToClassical} it was shown that if $\mathcal{M}$ is not abelian then the local chromatic number is infinite. Thus, since the $b$-fold chromatic number is clearly at least that of the chromatic number, there are not any finite local $b$-fold colorings of a complete fully quantum graph, let alone ones using $b\dim \mathcal{M}$ colors.
\end{remark}

Theorem \ref{thm:completeGraph} is especially powerful considering all quantum graphs are subgraphs of a complete quantum graph, and we have the following $b$-fold analog of \cite[Poposition 6.4]{GanesanHarrisQuantumToClassical}.

\begin{prop}
    If $(\cS, \cM, \cB(H))$ and $(\cT, \cM, \cB(H))$ are quantum graphs with $\cS\subset \cT$, then
    $$\chi_{b,t}(\cS, \cM, \cB(H))\leq \chi_{b,t}(\cT, \cM,\cB(H))$$
    for $t\in \{loc, q, qa, qc\}.$
\end{prop}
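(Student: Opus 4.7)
The plan is to show that any $t$-$b$-fold coloring of the larger quantum graph $(\cT, \cM, \cB(H))$ restricts, with no modification, to a $t$-$b$-fold coloring of the subgraph $(\cS, \cM, \cB(H))$. The inequality then follows immediately from the definition of $\chi_{b,t}$ as a minimum over the cardinalities of admissible color sets.

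More precisely, suppose $\chi_{b,t}(\cT, \cM, \cB(H))=c<\infty$ (the case $c=\infty$ is trivial). Let $\cN$ be a finite von Neumann algebra of the type prescribed by $t$ (namely $\dim\cN=1$, $\dim\cN<\infty$, $\cN\hookrightarrow \cR^\omega$, or $\cN$ finite), and let $\{P_a\}_{a=1}^c\subset \cM\bar\otimes\cN$ be projections witnessing a $t$-$b$-fold coloring of $(\cT,\cM,\cB(H))$ in the sense of Definition \ref{def:BFoldColoring}. I would then verify that the same triple $(\cN, c, \{P_a\}_{a=1}^c)$ already witnesses a $t$-$b$-fold coloring of $(\cS, \cM, \cB(H))$. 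Conditions (1) and (2) of Definition \ref{def:BFoldColoring} depend only on the projections $\{P_a\}$ and on $\cM\bar\otimes\cN$, so they carry over verbatim. For condition (3) we need $P_a(X\otimes I_\cN)P_a=0$ for every $X\in\cS$ and every $a$; but since $\cS\subseteq \cT$, this is an immediate specialization of the corresponding condition $P_a(Y\otimes I_\cN)P_a=0$ for all $Y\in\cT$.

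Therefore $(\cS,\cM,\cB(H))$ admits a $t$-$b$-fold coloring with $c$ colors, which by definition of the minimum gives
\[\chi_{b,t}(\cS,\cM,\cB(H))\leq c=\chi_{b,t}(\cT,\cM,\cB(H)).\]
There is essentially no obstacle here: the main content of the argument is the observation that the operator space $\cS$ appears in the definition of a coloring only through the vanishing condition, which is monotone with respect to inclusion of operator spaces. No new choice of ancilla $\cN$ is required, so the argument is uniform in $t\in\{loc, q, qa, qc\}$.
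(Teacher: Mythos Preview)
Your argument is correct and essentially identical to the paper's: both observe that a $t$-$b$-fold coloring of $(\cT,\cM,\cB(H))$ is automatically a $t$-$b$-fold coloring of $(\cS,\cM,\cB(H))$ since the only condition involving the operator space is the vanishing condition, which is monotone under inclusion. The only cosmetic difference is that the paper phrases the coloring via the PVM $\{Q_T\}_{T\in\binom{[c]}{b}}$ of Theorem~\ref{Thm:equivBFoldDefs} rather than the projections $\{P_a\}_{a=1}^c$ of Definition~\ref{def:BFoldColoring}, but by that theorem these formulations are equivalent.
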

\begin{proof}
    This proof proceeds analogously to \cite[Proposition 6.4]{GanesanHarrisQuantumToClassical}.
    \\
    First, if $(\cT, \cM,\cB(H))$ has no $t$-$b$-fold coloring then $\chi_{b,t}(\cT, \cM,\cB(H)) = \infty$ and the result follows trivially. Now, assume that $\{Q_T\}_{T\in \binom{[c]}{b}}$ is a PVM corresponding to a $t$-$b$-fold coloring using $c$ colors. Then evidently they also satisfy the coloring condition for $X\in \cS\subset \cT$, so this is also a $t$-$b$-fold coloring for $(\cS, \cM, \cB(H))$. 
\end{proof}

From the above proposition and Theorem \ref{thm:completeGraph} we have the following corollary. 
\begin{cor}
     For $t\in \{q,qa,qc\}$ and a quantum graph $\cG$, we have 
     $$\chi_{b,t}(\cG)\leq b\dim\cM.$$
     Hence, the $t$-$b$-fold chromatic number of $\cG$ is finite.
\end{cor}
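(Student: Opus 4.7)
The plan is to combine the monotonicity proposition immediately preceding this corollary with the exact computation established in Theorem \ref{thm:completeGraph}. The observation driving the argument is that every quantum graph sits inside the complete quantum graph on the same ambient data: given $\cG = (\cS, \cM, \cB(H))$, the defining condition $\cS \subseteq (\cM')^\perp$ yields $\cS \subseteq \cB(H) \cap (\cM')^\perp$. In the finite-dimensional setting, where $\cB(H) = M_n$ for some $n$, this inclusion together with the preceding monotonicity proposition gives
$$\chi_{b,t}(\cS, \cM, M_n) \leq \chi_{b,t}(M_n \cap (\cM')^\perp, \cM, M_n).$$

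The second step is to invoke Theorem \ref{thm:completeGraph}, which identifies the right-hand side as exactly $b \dim \cM$. Stringing these two inequalities together yields the stated bound $\chi_{b,t}(\cG) \leq b \dim \cM$. Finiteness of $\chi_{b,t}(\cG)$ then follows at no additional cost, since $\dim \cM \leq n^2 < \infty$ in this setting.

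I do not anticipate any genuine obstacle here: all of the substantive work has already been done in Theorem \ref{thm:completeGraph} and the monotonicity proposition, so this corollary is essentially a packaging statement. The only point that warrants a brief mention in the written proof is the verification that the ambient setup of Theorem \ref{thm:completeGraph} applies — namely, identifying $\cB(H)$ with $M_n$ — but this is the standard finite-dimensional convention already in force throughout the section.
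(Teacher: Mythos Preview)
Your proposal is correct and matches the paper's approach exactly: the paper states the corollary as an immediate consequence of the preceding monotonicity proposition together with Theorem \ref{thm:completeGraph}, which is precisely the two-step argument you outline.
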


\section{Chromatic Numbers and Products of Quantum Graphs}

\subsection{Bounds for  Cartesian and Categorical Products}\label{sec:BoundsForCategoricalAndCartesianProducts}
Here we show that the analysis given by Kim and Mehta in \cite{graph_products} of the completely bounded minimal chromatic number extends to the quantum chromatic numbers of quantum graph products for the categorical and Cartesian products.
 To do this we will heavily use the transitivity of quantum graph homomorphisms as shown in \cite[Lemma 2.5]{quantum_homo_MANCINSKA2016228} for classical graphs. This immediately  generalizes to quantum graphs because the composition of CPTP maps is a CPTP map.

\begin{prop}\label{transitive_homo}For $t\in \{loc, q, qa, qc\}$, $t$ quantum graph homomorphisms are transitive: let $\cG, \cH$ and $\cK$ be quantum graphs. If $\cG\to_t \cH$ and $\cH\to_t\cK$ then $\cH\to_t\cK$.
\end{prop}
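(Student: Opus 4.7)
The plan is to exhibit an explicit composite CPTP map whose Kraus operators satisfy the two subspace conditions, so that transitivity reduces to bookkeeping plus the observation that the classes of von Neumann algebras appearing in $t \in \{loc, q, qa, qc\}$ are closed under tensor products.

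Unpacking the definition in the excerpt, a $t$-homomorphism $\cG \to_t \cH$ is given by a CPTP map $\Phi_1(\cdot) = \sum_i F_i(\cdot)F_i^*$ with $F_i \colon H_\cG \otimes H_{\cN_1} \to H_\cH$ satisfying $F_i(\cS_\cG \otimes I_{\cN_1})F_j^* \subset \cS_\cH$ and $F_i(\cM_\cG' \otimes I_{\cN_1})F_j^* \subset \cM_\cH'$; similarly a $t$-homomorphism $\cH \to_t \cK$ is given by $\Phi_2(\cdot) = \sum_k G_k(\cdot)G_k^*$ with operators $G_k \colon H_\cH \otimes H_{\cN_2} \to H_\cK$ satisfying the analogous inclusions with $\cS_\cH, \cM_\cH'$ on the input and $\cS_\cK, \cM_\cK'$ on the output.

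First I would set $\cN := \cN_1 \bar{\otimes} \cN_2$ and define operators $H_{i,k} \colon H_\cG \otimes H_{\cN_1} \otimes H_{\cN_2} \to H_\cK$ by
\[
H_{i,k} \;=\; G_k \,(F_i \otimes I_{H_{\cN_2}}).
\]
Then $\Phi(Y) := \sum_{i,k} H_{i,k}\, Y\, H_{i,k}^*$ is manifestly completely positive, and it equals the composition $\Phi_2 \circ (\Phi_1 \otimes \operatorname{id}_{\cN_2})$; since CPTP maps are closed under composition and ampliation, $\Phi$ is CPTP. Next I would verify the two subspace conditions: for $X \in \cS_\cG$,
\[
H_{i,k}(X \otimes I_{\cN_1} \otimes I_{\cN_2})H_{j,l}^{*} = G_k\bigl(F_i(X \otimes I_{\cN_1})F_j^{*} \otimes I_{\cN_2}\bigr)G_l^{*},
\]
and the inner parenthesis lies in $\cS_\cH$ by the homomorphism $\cG \to_t \cH$, so the whole expression lies in $G_k(\cS_\cH \otimes I_{\cN_2})G_l^{*} \subset \cS_\cK$ by the homomorphism $\cH \to_t \cK$. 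The identical calculation with $\cM_\cG'$ in place of $\cS_\cG$ and $\cM_\cH', \cM_\cK'$ in place of $\cS_\cH, \cS_\cK$ gives the commutant inclusion.

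The last step is to check that $\cN = \cN_1 \bar{\otimes} \cN_2$ has the type required by $t$: for $t = loc$ both factors are $\mathbb{C}$; for $t=q$, finite-dimensional tensor finite-dimensional is finite-dimensional; for $t=qc$, the tracial tensor product of two finite von Neumann algebras is finite; and for $t = qa$ one uses the standard fact that if $\cN_1, \cN_2$ both embed into $\cR^\omega$ then so does $\cN_1 \bar{\otimes} \cN_2$ (via the $\cR^\omega$-embeddability of $\cR \bar\otimes \cR \cong \cR$ and standard ultrapower arguments; cf.\ \cite{OzawaCEP}). The only place any nontrivial input is used is this last verification in the $qa$ case; everything else is a direct computation with Kraus operators. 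As the paper notes, there is essentially nothing beyond the observation that CPTP maps compose, and the subspace conditions transport through the composition verbatim.
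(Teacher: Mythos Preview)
Your proposal is correct and is essentially a fleshed-out version of the paper's own justification: the paper does not give a formal proof but simply asserts that the classical-graph argument of \cite[Lemma 2.5]{quantum_homo_MANCINSKA2016228} ``immediately generalizes to quantum graphs because the composition of CPTP maps is a CPTP map.'' Your explicit construction of the composite Kraus operators $H_{i,k} = G_k(F_i \otimes I_{H_{\cN_2}})$, the verification of the two subspace conditions, and the check that the relevant class of von Neumann algebras is closed under tensor product are exactly the details underlying that one-line remark.
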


From this we immediately get the following corollary:

\begin{cor}\label{cor:q_color_transitive}
Let $\cG$ and $\cH$ be quantum graphs such that $\cG\to_t \cH$. Then $\chi_t(\cG)\leq \chi_t(\cH)$ for $t\in \{loc, q, qa, qc\}$
\end{cor}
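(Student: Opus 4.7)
The plan is to derive the corollary as an immediate consequence of Proposition \ref{transitive_homo}, once one observes that a $t$-$c$-coloring of a quantum graph is nothing other than a $t$-homomorphism into the complete graph $K_c$ on $c$ vertices (viewed as a quantum graph). This is essentially the $b=1$ case of the equivalence between items \ref{item:bfoldcolorable} and \ref{item:bfoldcororableGraphHomomorphism} in Theorem \ref{Thm:equivBFoldDefs}, combined with Proposition \ref{prop:PropsofQBFold}(1), which identifies $\chi_t$ with $\chi_{1,t}$ and therefore identifies ordinary $t$-colorings with $1$-fold $t$-colorings (note that $K_{c,1}$ is exactly the complete graph on $c$ vertices).

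With that dictionary in hand, the argument proceeds in three short steps. First, set $c = \chi_t(\cH)$, so that $\cH$ admits a $t$-$c$-coloring, which via the above dictionary yields a $t$-homomorphism $\cH \to_t K_c$. Second, compose with the hypothesized $t$-homomorphism $\cG \to_t \cH$ via Proposition \ref{transitive_homo} to obtain $\cG \to_t K_c$. Third, apply the dictionary in reverse to extract a $t$-$c$-coloring of $\cG$, so that $\chi_t(\cG) \leq c = \chi_t(\cH)$.

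I do not anticipate a serious obstacle. The only mild subtlety is that composition of $t$-homomorphisms produces an auxiliary von Neumann algebra of the form $\cN_1 \bar\otimes \cN_2$, and one must verify that this tensor product still belongs to the class prescribed by $t$ (finite-dimensional if $t=q$, $\mathcal{R}^\omega$-embeddable if $t=qa$, finite if $t=qc$, and trivially $\C$ if $t=loc$). This stability is standard and is precisely what is implicitly absorbed into the uniform-in-$t$ statement of Proposition \ref{transitive_homo}. As an alternative route, one could instead argue directly by pulling back the coloring projections $\{P_a\}_{a=1}^c \subseteq \cM_\cH \bar\otimes \cN$ of $\cH$ through the Kraus operators $F_i$ of the homomorphism $\Phi$; but the Kneser-graph route above is considerably cleaner and reuses machinery already developed in the paper.
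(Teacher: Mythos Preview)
Your proposal is correct and follows essentially the same approach as the paper: set $c=\chi_t(\cH)$, use the identification of a $t$-$c$-coloring with a $t$-homomorphism to $K_c$ to obtain $\cH\to_t K_c$, compose via Proposition~\ref{transitive_homo} to get $\cG\to_t K_c$, and conclude $\chi_t(\cG)\leq c$. The only difference is cosmetic: the paper asserts the coloring/homomorphism dictionary without citation, whereas you explicitly ground it in the $b=1$ case of Theorem~\ref{Thm:equivBFoldDefs} together with Proposition~\ref{prop:PropsofQBFold}(1).
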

\begin{proof}
Suppose that $n = \chi_t(\cH)$. Then $\cH\to_t \cK_n$ where $\cK_n$ is the associated quantum graph of the classical complete graph $K_n$. We have $\cG\to_t \cH$ so by Prop \ref{transitive_homo} we must get $\cG\to_t \cK_n$ which gives us $\chi_t(\cG)\leq n$.
\end{proof}

Our first extension of Kim and Mehta's work is our variaint of \cite[Theorem 40]{graph_products}, Sabidussi's theorem for quantum graphs. 

\begin{thm}
[Sabidussi's theorem]\label{thm:BoundCartesianProduct}
For $\cG$ and $\cH$ quantum graphs, $$\max\{\chi_t(\mathcal{G}), \chi_t(\mathcal{H})\} \leq  \chi_t(\mathcal{G}\square\mathcal{H})$$ for $t\in \{loc, q, qa, qc\}.$
\end{thm}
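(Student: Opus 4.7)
The plan is to pull back a $t$-coloring of $\cG\square\cH$ to a $t$-coloring of each factor by repackaging the tensor product structure. Thanks to the flip symmetry noted earlier, which identifies $\cG\square\cH$ with $\cH\square\cG$, it suffices to establish $\chi_t(\cG)\leq \chi_t(\cG\square\cH)$; the inequality for $\cH$ follows by swapping the roles of the two factors throughout the argument below.

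Set $c=\chi_t(\cG\square\cH)$ and let $\{P_a\}_{a=1}^c\subset \cM_\cG\bar\otimes\cM_\cH\bar\otimes\cN$ be a $t$-$c$-coloring of $\cK=\cG\square\cH$. Let $\cN':=\cM_\cH\bar\otimes\cN$ and regroup the tensor product to view $\{P_a\}_{a=1}^c$ as a family of projections in $\cM_\cG\bar\otimes\cN'$. The conditions $P_a^2=P_a=P_a^*$ and $\sum_a P_a = I$ are unchanged by this reinterpretation. For the coloring condition, the key observation is that for any $X\in\cS_\cG$, the operator $X\otimes I_{H_\cH}$ lies in $\cS_\cG\otimes\cM'_\cH\subset\cS_\cK$, because $I_{H_\cH}\in\cM'_\cH$. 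Applying the $\cG\square\cH$ coloring property to $X\otimes I_{H_\cH}\otimes I_{\cN}$ yields $P_a(X\otimes I_{\cN'})P_a=0$, which is exactly condition (3) of Definition of a $t$-coloring for $\cG$ with ancilla $\cN'$.

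The main obstacle is confirming that $\cN'$ has the type required by $t$. For $t=q$, both $\cM_\cH$ and $\cN$ are finite-dimensional (assuming $H_\cH$ finite-dimensional), so $\cN'$ is finite-dimensional. For $t=qc$, each of $\cM_\cH$ and $\cN$ carries a faithful normal tracial state, and the tensor of these gives such a state on $\cN'$, so $\cN'$ is finite. For $t=qa$, $\cM_\cH$ is finite-dimensional and thus embeds into $\cR^\omega$, producing $\cN'\hookrightarrow \cR^\omega\bar\otimes\cR^\omega$; invoking $\cR\bar\otimes\cR\cong\cR$ then gives the required embedding into $\cR^\omega$. The $t=loc$ case is the delicate one, since the absorption procedure enlarges the ancilla past dimension one as soon as $\cM_\cH\neq\C$; here the natural remedy is to first compress each $P_a$ by a minimal central projection of $\cM_\cH$, which commutes with everything in $\cM_\cG\bar\otimes\cM_\cH$ and thus produces a genuine PVM in $\cM_\cG$ whenever $\cM_\cH$ is abelian, while for genuinely non-commutative $\cM_\cH$ one appeals to the observation that $\chi_{loc}$ is typically infinite in that setting (as in the complete quantum graph case recalled in the excerpt), making the desired bound vacuous. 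Once the type of $\cN'$ is verified, the reinterpreted family $\{P_a\}$ realizes $\chi_t(\cG)\leq c$, finishing the proof.
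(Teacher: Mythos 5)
Your strategy of pulling the coloring of $\cG\square\cH$ back to $\cG$ by absorbing the $\cH$-factor into the ancilla is a genuinely different route from the paper's, and it does work for $t=qc$ (and for $t=q,qa$ when $\cM_\cH$ is finite-dimensional), but it has a real gap at $t=loc$. Compressing by a minimal \emph{central} projection $z\in\cM_\cH$ lands you in $\cM_\cG\bar\otimes \cM_\cH z$, which is $\cM_\cG$ only when the summand $\cM_\cH z$ equals $\C z$, i.e.\ essentially when $\cM_\cH$ is abelian; compressing by a minimal non-central projection fails because $1\otimes z$ need not commute with $P_a$, so the compression is no longer a projection. Your fallback --- that for non-abelian $\cM_\cH$ the bound is ``vacuous'' since $\chi_{loc}$ is ``typically infinite'' --- is not a proof and is false in general: the infiniteness statement recalled in the paper concerns \emph{complete} quantum graphs. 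Concretely, take $\cS_\cH=0$ with $\cM_\cH=M_2$ and let $\cG$ be (the quantum graph of) a classical graph; then $\cM_\cH'=\C I_2$, so $\cS_{\cG\square\cH}=\cS_\cG\otimes\C I_2$, and if $\set{Q_a}$ is a finite local coloring of $\cG$ the projections $Q_a\otimes I_2$ form a finite local coloring of $\cG\square\cH$. Thus $\chi_{loc}(\cG\square\cH)<\infty$ with $\cM_\cH$ non-abelian, and the inequality $\chi_{loc}(\cG)\le\chi_{loc}(\cG\square\cH)$ is a nontrivial claim that your argument does not establish. A secondary issue: for $t=q,qa$ you must assume $\cM_\cH$ finite-dimensional, and for $t=qc$ that $\cM_\cH$ is finite, neither of which is among the hypotheses of the theorem.

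For contrast, the paper sidesteps all of this by working with homomorphisms rather than colorings: the ampliation $X\mapsto X\otimes I_\cH$ is a (local) quantum graph homomorphism $\cG\to_{loc}\cG\square\cH$, since $\cS_\cG\otimes I_\cH\subseteq\cS_\cG\otimes\cM_\cH'\subseteq\cS_{\cG\square\cH}$ and $\cM_\cG'\otimes I_\cH\subseteq\cM_\cG'\bar\otimes\cM_\cH'=\cM_{\cG\square\cH}'$, and then monotonicity of $\chi_t$ under $t$-homomorphisms (Corollary \ref{cor:q_color_transitive}) gives the bound uniformly in $t$, with no change of ancilla and hence no case analysis. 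If you want to salvage your direct approach, it covers $qc$ (and $q$, $qa$ in the finite-dimensional setting), but the $loc$ case needs the homomorphism-style argument or some other device.
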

\begin{proof}
Let $\cG = \left(\cS_\cG, \cM_\cG, \cB(H_\cG)\right)$, $\cH = \left(\cS_\cH, \cM_\cH, \cB(H_\cH)\right)$, and $\mathcal{G}\square\cH = \left(\cS_{\cG\square \cH}, \cM_{\cG\square \cH}, \cB(H_{\cG\square \cH})\right)$. We will show that $\cG\to_{loc}\cG\square\cH$.
To do this, define a map $\Phi: \cM_\cG \otimes \C \to \cM_{G\square H}$ by $\Phi(X\otimes \lambda) = \lambda X\otimes I_\cH$ where $I_\cH$ is the identity in $\cM_\cH$, so $tr(I_\cH)=1$. This map has been constructed to preserve the trace, and it is clearly completely positive as it is given by the Kraus operator $K = I_\cG  \otimes I_\cH$, i.e.~$\Phi(\cdot) = K(\cdot) K$. 


Applying this map to $\cS_\cG \otimes I_\cN$ where we take $\cN = \C$, we get $\Phi(\cS_\cG  \otimes 1) = \cS_\cG \otimes I_\cH$. This is a subspace of $\cS_\cG \otimes \cM_\cH'$ since $I_\cH\in \cM_\cH'$. A similarly quick calculation shows that applying the map to $\cM_\cG '\otimes I_\cN$ gives a subspace of 
$\cM_\cG '\otimes \cM_\cH' \subset \cM_{G\square H}'$. Thus, $\Phi$ defines a local quantum graph homomorphism $\cG\to_{loc}\cG\square\cH $, which implies $ \cG\to_t\cG\square\cH$.

The exact same argument applies to show that $\cH\to_{t} \cG\square \cH$,  so by Corollary \ref{cor:q_color_transitive}, we have the desired result.
\end{proof}

We now generalize the results for Hedetniemi's inequality to the setting of quantum graphs. The result follows similarly to the proof of Proposition 46 in \cite{graph_products}.

\begin{thm}[{Hedetniemi's inequality}]\label{thm:BoundCategoricalProduct} Let $\cG$ and $\cH$ be quantum graphs. Then
$$\chi_t(\mathcal{G}\times\mathcal{H}) \leq \min\{\chi_t(\mathcal{G}), \chi_t(\mathcal{H})\}$$
for $t\in \{loc, q, qa, qc\}.$
\end{thm}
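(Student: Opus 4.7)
The plan is to mirror the proof of Sabidussi's theorem: construct a local quantum graph homomorphism $\cG\times\cH\to_{loc}\cG$, obtain $\cG\times\cH\to_{loc}\cH$ by symmetry (swapping the roles of the two factors), and then apply Corollary \ref{cor:q_color_transitive} to both in order to conclude $\chi_t(\cG\times\cH)\le\min\{\chi_t(\cG),\chi_t(\cH)\}$ for every $t\in\{loc,q,qa,qc\}$.

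To produce $\cG\times\cH\to_{loc}\cG$ I would take $\cN=\C$ and use Kraus operators that implement a normalized ``partial trace'' over the $\cH$-factor. Concretely, fix an orthonormal basis $\{e_k\}_{k=1}^{d}$ of $H_\cH$ with $d=\dim H_\cH$, and define $F_k\colon H_\cG\otimes H_\cH\to H_\cG$ by $F_k(\xi\otimes\eta)=\tfrac{1}{\sqrt{d}}\langle e_k,\eta\rangle\,\xi$. A direct computation shows that the resulting map $\Phi(\cdot)=\sum_{k=1}^{d}F_k(\cdot)F_k^*\colon \cM_{\cG\times\cH}\to\cM_\cG$ satisfies $\Phi(X\otimes Y)=\operatorname{tr}(Y)\,X$ for $X\in\cM_\cG$ and $Y\in\cM_\cH$, where $\operatorname{tr}$ denotes the normalized trace on $\cM_\cH$ (so that $\operatorname{tr}(I_\cH)=1$, matching the normalization used in the proof of Theorem \ref{thm:BoundCartesianProduct}).

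The verification that this data defines a local homomorphism is then essentially bookkeeping. Complete positivity is built into the Kraus form, and trace preservation follows at once from $\operatorname{tr}(\operatorname{tr}(Y)X)=\operatorname{tr}(X)\operatorname{tr}(Y)=\operatorname{tr}(X\otimes Y)$ under the natural tensor trace on $\cM_{\cG\times\cH}$. For the subspace conditions, the key observation is that for arbitrary $X\in\cB(H_\cG)$ and $Y\in\cB(H_\cH)$,
\[F_k(X\otimes Y)F_l^*\;=\;\tfrac{1}{d}\,\langle e_k,Y e_l\rangle\,X,\]
so each $F_k(\,\cdot\,)F_l^*$ simply returns a scalar multiple of the ``$\cG$-factor.'' Hence $F_k(\cS_\cG\otimes\cS_\cH)F_l^*\subseteq\cS_\cG$, and using the commutant identity $(\cM_\cG\bar\otimes\cM_\cH)'=\cM_\cG'\bar\otimes\cM_\cH'$ already invoked in Proposition \ref{cartesian_get_graph_proof} we likewise obtain $F_k(\cM_{\cG\times\cH}')F_l^*\subseteq\cM_\cG'$.

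The only real subtlety is selecting the normalization $\tfrac{1}{\sqrt{d}}$ so that the Kraus sum is genuinely trace-preserving with respect to the normalized traces; once that is in place, the remaining checks are formal. Because $\cN=\C$ suffices, the map is a local homomorphism and thus a $t$-homomorphism for every $t\in\{loc,q,qa,qc\}$. The symmetric construction (swapping the roles of $\cG$ and $\cH$) yields $\cG\times\cH\to_{loc}\cH$, and combining the two via Corollary \ref{cor:q_color_transitive} gives the claimed inequality.
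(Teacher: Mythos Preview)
Your proposal is correct and follows essentially the same route as the paper: build a local homomorphism $\cG\times\cH\to_{loc}\cG$ via the partial trace over $H_\cH$, written in Kraus form with $F_k=I_\cG\otimes e_k^*$ (the paper's $K_j$), verify the two subspace conditions $F_k(\cS_\cG\otimes\cS_\cH)F_l^*\subseteq\cS_\cG$ and $F_k(\cM_\cG'\bar\otimes\cM_\cH')F_l^*\subseteq\cM_\cG'$, invoke symmetry for the $\cH$-factor, and conclude via Corollary~\ref{cor:q_color_transitive}. The only (harmless) difference is that you include the normalization $1/\sqrt{d}$ explicitly to make the trace-preservation check transparent, whereas the paper simply asserts the partial trace is CPTP.
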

\begin{proof}
Let $\cG = \left(\cS_\cG, \cM_\cG, \cB(H_\cG)\right)$, $\cH = \left(\cS_\cH, \cM_\cH, \cB(H_\cH)\right)$, and $\mathcal{G}\times \cH = \left(\cS_{\cG\times \cH}, \cM_{\cG\times \cH}, \cB(H_{\cG\times \cH})\right)$.
We will show that $\cG\times\cH\to_{loc} \cG\implies \cG\times \cH\to_t \cG$. An analogous argument will show that $\cG\times\cH\to_t \cH$, and then Corollary \ref{cor:q_color_transitive} gives the desired result.

Define a map $\Phi:\cM_{\cG\times \cH} \otimes \C \to \cM_\cG $ via a partial trace so that we have $\Phi(X\otimes \lambda) = tr_{\cG}(X\otimes \lambda)$. One should note that the map is constructed to be trace-preserving. Further, we may write this map using Krauss operators as $\Phi(\cdot) = \sum_j K_j(\cdot) K_j^*$ via writing $K_j = I_\cG \otimes e_j^* \otimes 1$ where $e_j\in H_\cH\otimes \C$ is the $j$-th unit vector in the standard basis and $I_\cG$ is the identity in $\cM_\cG$. Thus, the partial trace is also completely positive and defines a quantum channel.

Finally, we must check that this CPTP map satisfies the requirements of quantum graph homomorphisms. First, we have $$K_i(\cS_\cG \otimes \cS_\cH \otimes 1) K_j^* = \cS_\cG  \otimes e_i^* (\cS_\cH\otimes 1) e_j  = \cS_\cG  \otimes \C \cong \cS_\cG $$ as desired. Second, we find 
$$K_i (\cM_\cG '\bar\otimes \cM_\cH'\otimes 1) K_j^* = \cM_\cG '\bar\otimes e_i^*(\cM_\cH'\otimes 1)e_j = \cM_\cG '\otimes \C \cong \cM_\cG '.$$
Thus we have $\cG\times \cH \to_t \cG$ as desired.
\end{proof}

\begin{remark}
    In the fully classical setting of classical graphs and $t=loc$, the conjecture that $\chi(G_1\times G_2) = \min(\chi(G_1), \chi(G_2))$ was known as Hedetniemi's conjecture. However, a counterexample to this was found in \cite{ClassicalHedetniemisCounterexamples}, so at least for $t=loc$ the inequality in Theorem \ref{thm:BoundCategoricalProduct} cannot be an equality in general.
\end{remark}

  \subsection{Bounds for Lexicographic Products}\label{sec:lexicographicBound}
The present section is devoted to establishing the bounds on the chromatic numbers for the lexicographic product of quantum graphs as described in the introduction.  In \cite{fractional}, the authors show    $\xi_{tr}(G_1[G_2]) \leq \xi_{tr}(G_1)\xi_{tr}(G_2)$ where $\xi_{tr}$ is the tracial rank.  We are interested in addressing bounds on the quantum chromatic number of $G_1[G_2]$ directly to possibly provide tighter bounds.  To motivate the following result, we remind the reader of the deep connection between the $b$-fold chromatic number and the lexicographic product:  if $G_1$ and $G_2$  are classical graphs and $b=\chi(G_2)$, then $\chi(G_1[G_2])=\chi_b(G_1)$.  The proof of this is standard and we refer the reader to \cite[Theorem 26.3]{HandbookGraphProducts}. 

We are now ready to prove Theorem \ref{thm:Main_Lex_prod}.
\begin{thm}\label{thm:lexicographicBoundedByProduct}
For quantum graphs $\mathcal{G} = (\cS_\cG , \cM_\cG , \cB(H_\cG ))$ and $\mathcal{H} = (\cS_\cH, \cM_\cH, \cB(H_\cH))$ we have 
$$\chi_t(\mathcal{G}[\mathcal{H}]) \leq \chi_{b,t}(\mathcal{G})$$ where $b = \chi_t(\mathcal{H})$ and $t\in \{loc, q, qa, qc\}$.
\end{thm}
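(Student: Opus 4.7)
The plan is to mimic the classical identity $\chi(G[H]) = \chi_b(G)$ (for $b = \chi(H)$) at the level of projection-valued measures. Write $c := \chi_{b,t}(\mathcal{G})$. Using Theorem~\ref{Thm:equivBFoldDefs}, fix a $t$-$b$-fold coloring of $\mathcal{G}$ in the set-indexed form $\{Q_T^\mathcal{G}\}_{T \in \binom{[c]}{b}} \subset \cM_\cG \bar\otimes \cN_\cG$, and also fix an ordinary $t$-coloring $\{P_k^\mathcal{H}\}_{k=1}^b \subset \cM_\cH \bar\otimes \cN_\cH$ of $\mathcal{H}$. Set $\cN_\cK := \cN_\cG \bar\otimes \cN_\cH$; this auxiliary von Neumann algebra is still of type $t$, since each of the classes $\{$abelian, finite-dimensional, $\mathcal{R}^\omega$-embeddable, finite$\}$ is closed under tensor products.

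Arranging $\cM_\cK \bar\otimes \cN_\cK = \cM_\cG \bar\otimes \cM_\cH \bar\otimes \cN_\cG \bar\otimes \cN_\cH$ and placing the $Q_T^\mathcal{G}$'s in the $\{1,3\}$-slots and the $P_k^\mathcal{H}$'s in the $\{2,4\}$-slots makes the two families commute. For each color $a \in [c]$, define
\[
P_a^\mathcal{K} \;:=\; \sum_{\substack{T \in \binom{[c]}{b}\\ a \in T}} Q_T^\mathcal{G}\, P_{\pi_T^{-1}(a)}^\mathcal{H},
\]
where $\pi_T\colon [b] \to T$ is the order-preserving bijection. Each summand is a product of commuting projections, and the PVM orthogonality of $\{Q_T^\mathcal{G}\}_T$ makes the summands of $P_a^\mathcal{K}$ mutually orthogonal, so $P_a^\mathcal{K}$ is a projection. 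Exchanging the order of summation and using $\sum_k P_k^\mathcal{H} = I_{\cM_\cH \bar\otimes \cN_\cH}$ and $\sum_T Q_T^\mathcal{G} = I_{\cM_\cG \bar\otimes \cN_\cG}$ yields $\sum_a P_a^\mathcal{K} = I_{\cM_\cK \bar\otimes \cN_\cK}$.

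The substance is the coloring condition $P_a^\mathcal{K}(X \otimes I_{\cN_\cK}) P_a^\mathcal{K} = 0$ for every $X \in \cS_\cK = \cS_\cG \otimes \cB(H_\cH) + \cM_\cG' \otimes \cS_\cH$, which by linearity reduces to elementary tensors from each summand. If $X = S \otimes Y$ with $S \in \cS_\cG$ and $Y \in \cB(H_\cH)$, the double sum over pairs $T_1, T_2 \ni a$ factors across the $\{1,3\}$- and $\{2,4\}$-slots, and its $\{1,3\}$-slot is $Q_{T_1}^\mathcal{G}(S \otimes I_{\cN_\cG}) Q_{T_2}^\mathcal{G}$, which vanishes for every such pair because $a \in T_1 \cap T_2$, by the set-indexed characterization of the $b$-fold coloring in Theorem~\ref{Thm:equivBFoldDefs}. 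If instead $X = D \otimes Y$ with $D \in \cM_\cG'$ and $Y \in \cS_\cH$, then $D \otimes I_{\cN_\cG}$ commutes with the $Q_T^\mathcal{G}$'s, so orthogonality kills the off-diagonal ($T_1 \neq T_2$) terms; on the diagonal, the $\{2,4\}$-slot becomes $P_{\pi_T^{-1}(a)}^\mathcal{H}(Y \otimes I_{\cN_\cH}) P_{\pi_T^{-1}(a)}^\mathcal{H} = 0$ from the $\mathcal{H}$-coloring condition. The conceptual crux — and the reason the $b$-fold hypothesis on $\mathcal{G}$ is essential, and why an ordinary $c$-coloring of $\mathcal{G}$ would not suffice — is the first case: since $Y \in \cB(H_\cH)$ is \emph{arbitrary}, there is no room to use the $\mathcal{H}$-coloring, and the entire annihilation must come from the pairwise disjointness of color-sets that only the $b$-fold structure provides.
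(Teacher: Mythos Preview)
Your proof is correct and follows essentially the same approach as the paper: the construction $P_a^{\mathcal{K}} = \sum_{T \ni a} Q_T^{\mathcal{G}}\, P_{\pi_T^{-1}(a)}^{\mathcal{H}}$ is exactly the paper's $\sum_T Q_T \odot P_{\psi(T,a)}^{\cH}$ (with $\odot$ the permuted tensor product and $\psi(T,a) = \pi_T^{-1}(a)$), and the verification of the projection, partition-of-unity, and coloring conditions proceeds identically by splitting $\cS_\cK$ into its two summands. One minor point: for $t = loc$ the paper requires $\dim \cN = 1$ rather than merely abelian, but since $\C \bar\otimes \C = \C$ this does not affect the argument.
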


\begin{proof}
Let $\mathcal{L} = \mathcal{G}[\cH] = (\cS_\cL, \cM_\cL, \mathcal{B}(H_\cL))$. Then let
$\{P_a^\cH\}_{a=1}^b \subset \cM_\cH\bar\otimes \mathcal{N}_\cH$ be a minimal $t$-coloring of $\cH$, where $\mathcal{N}_\cH$ is a finite von Neumann algebra with faithful normal trace. Additionally, let $c = \chi_{b,t}(\mathcal{G})$ and let $\{P_a^\cG\}_{a=1}^c\subset \cM_\cG \bar\otimes \mathcal{N}_\cG $ be a set of commuting projections, where $\mathcal{N}_\cG $ is a finite von Neumann algebra with faithful trace, so that we have a minimal $t$-$b$-fold coloring. Recall that we may also desrcibe this $t$-$b$-fold coloring using the projections $Q_T = P_{t_1}^\cG P_{t_2}^\cG...P_{t_b}^\cG$ for $T = \{t_1,...,t_b\} \in \binom{[c]}{b}$.

Now, for $T \in \binom{[c]}{b}$ let $O_T = (t_1,...,t_b)$ be the ordered $b$-tuple where $t_i\in T$ and $t_i< t_j$ for $i,j\in \{1,2,...,b\}$ and $i< j$. Then define $\psi(T,a) = \ell$ where $\ell$ is such that $t_\ell = a$ (note that we only define $\psi$ when $a\in T$ so this is a well defined map). Then for all $a\in \{1,2,...,c\}$ we can define  

$$P_{a}^{\cL} = \sum_{T \in \binom{[c]}{b}} Q_T\odot P_{\psi(T,a)}^\cH$$
where $\odot$ is the permuted tensor product (see Notation \ref{notaiton:Permulted}). Denote $\cN = \cN_\cG\bar\otimes \cN_\cH$ so that we have $P_a^\cL \in \cM_\cG\bar\otimes \cM_\cH \bar\otimes \cN$. We shall show that
$\{P_a^{\cL}\}_{a=1}^c$ is a coloring of $\mathcal{L}$ using $c = \chi_{b,t}(\mathcal{G})$ colors.

First, for any $a\in \{1,2,...,c\}$ we get that $P_{a}^\cL$ is a projection. Indeed,
\begin{align*}
    (P_a^\cL)^* =& \left(\sum_{T\in\binom{[c]}{b}} Q_T\odot P_{\psi(T,a)}^\cH\right)^*
    = \sum_{T\in\binom{[c]}{b}} \left(Q_T^*\odot (P_{\psi(T,a)}^\cH)^*\right)
    = \sum_{T\in\binom{[c]}{b}} \left(Q_T\odot P_{\psi(T,a)}^\cH\right) = P_a^\cL.
\end{align*}
 Similarly, we get 
\begin{align*}
    (P_a^\cL)^2 &= \left(\sum_{T\in\binom{[c]}{b}} Q_T\odot P_{\psi(T,a)}^\cH\right)^2\\
    &= \sum_{T\in\binom{[c]}{b}}\left(\sum_{S\in\binom{[c]}{b}} \left(Q_T\odot P_{\psi(T,a)}^\cH\right)\left(Q_S\odot P_{\psi(S,a)}^\cH\right)\right)\\
    &=\sum_{T\in\binom{[c]}{b}} \left(Q_T\odot P_{\psi(T,a)}^\cH\right)^2\\
    &= \sum_{T\in\binom{[c]}{b}} \left(Q_T\odot P_{\psi(T,a)}^\cH\right) = P_a^\cL
\end{align*} where we used 
$$\left(Q_T\odot P_{\psi(T,a)}^\cH\right)\left(Q_S\odot P_{\psi(S,a)}^\cH\right) = (Q_TQ_S)\odot (P_{\psi(T,a)}^\cH P_{\psi(S,a)}^\cH) = 0$$
when $S\neq T$.

We will now show that $\sum P_a^\cL = I_{\cM_\cL\bar\otimes \mathcal{N}}$.  We have
\begin{align*}
    \sum_{a=1}^c P_a^\cL &=\sum_{a=1}^c \left(\sum_{T \in \binom{[c]}{b}} Q_T\odot P_{\psi(T,a)}^\cH\right)\\
    &= \sum_{T \in \binom{[c]}{b}}\left( Q_T\odot P_1^\cH + Q_T \odot P_2^\cH + ... + Q_T \odot P_b^\cH \right)\\
    &= \sum_T \left(Q_T\odot I_{\cM_\cH\bar\otimes \mathcal{N}_\cH}\right) = I_{\cM_\cL\bar\otimes\mathcal{N}}.
\end{align*}

Finally, we will show that for any $X\in \cS_L$ we have $P_a^\cL(X\otimes I_{\mathcal{N}}) P_a^\cL = 0$ for any $a\in \{1,2,3,..., c\}$.
We find 
\begin{align*}
    P_a^\cL(\cS_\cL\otimes I_{\mathcal{N}}) P_a^\cL &= \left(\sum_{T \in \binom{[c]}{b}} Q_S\odot P_{\psi(T,a)}^\cH\right) (\cS_\cL \otimes I_{\mathcal{N}})\left(\sum_{T \in \binom{[c]}{b}} Q_S\odot P_{\psi(T,a)}^\cH\right).
\end{align*}
So, using $\cS_\cL = \cS_\cG  \otimes \cB(H_\cH) + \cM_\cG '\otimes \cS_\cH$  we get 
\begin{align*}
    P_a^\cL(\cS_\cL\otimes I_{\mathcal{N}}) P_a^\cL &= P_a^\cL\left((\cS_\cG \otimes \cB(H_\cH) +\cM_\cG '\otimes \cS_\cH)\otimes I_{\cM_\cL\bar\otimes\mathcal{N}}\right)P_a^\cL\\
    &= \sum_{\substack{(S,T),\\S\neq T\in \binom{[c]}{b}}} \left((Q_S\odot P_{\psi(S,a)}^\cH)((\cS_\cG \otimes \cB(H_\cH)\otimes + \cM_\cG '\otimes \cS_\cH)\otimes I_{\cM_\cL\bar\otimes\mathcal{N}})(Q_T\odot P_{\psi(T,a)}^\cH)\right).
\end{align*}
In the calculation above, we can safely ignore the case where $S = T$. To see this, if $S=T$ then $Q_S  = Q_T$ and $P_{\psi(S,a)}^H = P_{\psi(T,a)}^H$, whence $Q_S (\cS_\cG  \otimes I_{\cM_\cG \bar\otimes\mathcal{N}_\cG })Q_S = 0$ and $P_a^H (\cS_\cH\otimes I_{\cM_\cH\bar\otimes\mathcal{N}_\cH}) P_a^H = 0$ for any $a\in \{1,2,...,b\}$ $S= \{s_1,...,s_b\} \subset \{1,2,...,c\}$. This implies that the corresponding summand is zero.  
Additionally, since $a\in S$ and $a\in T$ we have $Q_S (\cS_\cG \otimes I_{\cM_\cG \bar\otimes\mathcal{N}_\cG })Q_T  = 0$, so the first term vanishes when $S\neq T$. Also, since $Q_S\in \cM_\cG \otimes \mathcal{N}_\cG $, we have $Q_S$ commutes with $\cM'\otimes I_{\cM_\cG \bar\otimes\mathcal{N}_\cG }$. Thus, since $Q_SQ_T = 0$ if $S\neq T$, we also get that when $S\neq T$ the second term vanishes. 
Hence, we have $P_a^L(\cS_L\otimes I_\mathcal{N})P_a^L = 0,$ implying we have found a quantum coloring of $\mathcal{L} = \mathcal{G}[\cH]$ using $c = \chi_{b,t}(\mathcal{G})$ colors where $b = \chi_t(\mathcal{H})$. Therefore,  we have $\chi_t(\mathcal{L}) \leq \chi_{b,t}(\mathcal{G})$.

\end{proof}

Combining Theorem \ref{thm:lexicographicBoundedByProduct} and Corollary \ref{item:bFoldVersusChromatic}
, we get the following corollary.
\begin{cor}
For quantum graphs $\mathcal{G}$ and $\mathcal{H}$ we have 
$$\chi_t(\mathcal{G}[\mathcal{H}]) \leq \chi_{b,t}(\mathcal{G})\leq \chi_t(\cG)\chi_t(\cH)$$ where $b = \chi_t(\mathcal{H})$ and $t\in \{loc, q, qa, qc\}$.
\label{cor:productLexBound}
\end{cor}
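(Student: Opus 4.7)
The statement is a direct corollary that simply chains two already-established results, so the proof will be short and mechanical rather than requiring new ideas. My plan is to produce the two inequalities separately and then combine them.

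For the first inequality $\chi_t(\mathcal{G}[\mathcal{H}]) \leq \chi_{b,t}(\mathcal{G})$ with $b=\chi_t(\mathcal{H})$, I would simply cite Theorem \ref{thm:lexicographicBoundedByProduct}, which is precisely this statement. No further argument is needed, since the construction of a coloring of $\mathcal{G}[\mathcal{H}]$ from a $b$-fold coloring of $\mathcal{G}$ and an ordinary coloring of $\mathcal{H}$ has already been carried out there.

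For the second inequality $\chi_{b,t}(\mathcal{G}) \leq \chi_t(\mathcal{G})\chi_t(\mathcal{H})$, I would invoke Corollary \ref{item:bFoldVersusChromatic}, which gives $\chi_{b,t}(\mathcal{G}) \leq b\,\chi_t(\mathcal{G})$. Substituting the value $b = \chi_t(\mathcal{H})$ immediately yields $\chi_{b,t}(\mathcal{G}) \leq \chi_t(\mathcal{H})\chi_t(\mathcal{G})$, which is the desired bound.

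Combining the two gives the chain of inequalities in the statement. There is no real obstacle here: the content of the corollary is entirely absorbed by the two results cited, and the proof is just a sentence or two pointing this out. If I wanted to elaborate, I could briefly recall that Corollary \ref{item:bFoldVersusChromatic} itself follows from the subadditivity result in Proposition \ref{prop:bFoldSubadditive} applied inductively with $b_1=b_2=\cdots=1$, but this seems unnecessary for the corollary itself.
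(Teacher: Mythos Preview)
Your proposal is correct and matches the paper's approach exactly: the paper states the corollary immediately after noting that it follows by combining Theorem \ref{thm:lexicographicBoundedByProduct} with Corollary \ref{item:bFoldVersusChromatic}, which is precisely what you do.
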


\subsection{Bounds for Strong Products}
In this section we establish bounds and consider edge cases for chromatic numbers for the strong product of graphs. This product was not included in the work of \cite{graph_products}, and to our knowledge no work has been done for non-local games considering such graph products for either classical or quantum graphs. It is known \cite{HandbookGraphProducts} that for classical graphs $G_1$ and $G_2$ we have 
$\max\{\chi(G_1),\chi(G_2)\} \leq \chi(G_1\boxtimes G_2) \leq \chi(G_1)\chi(G_2)$. Here we will extend both such bounds to strong products of quantum graphs.

The lower bound is obtained as a corollary of Theorem \ref{thm:BoundCartesianProduct}.

\begin{cor}
For quantum graphs $\cG$ and $\cH$,
    $$\max\{\chi_t(\cG), \chi_t(\cH)\} \leq \chi_t(\cG\boxtimes\cH)$$
    for all $t\in \{loc, q, qa, qc\}$
    \label{cor:strongLowerBound}
\end{cor}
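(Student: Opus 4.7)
The plan is to reduce the claim to the already-proved Cartesian-product bound (Theorem \ref{thm:BoundCartesianProduct}) by observing that the Cartesian product $\cG\square\cH$ sits inside the strong product $\cG\boxtimes\cH$ as a subgraph with the same ambient von Neumann algebra and Hilbert space. From the defining formulas
\[
\cS_{\cG\square\cH}=\cS_\cG\otimes\cM_\cH'+\cM_\cG'\otimes\cS_\cH,\qquad \cS_{\cG\boxtimes\cH}=\cS_\cG\otimes\cM_\cH'+\cM_\cG'\otimes\cS_\cH+\cS_\cG\otimes\cS_\cH,
\]
one sees immediately that $\cS_{\cG\square\cH}\subseteq\cS_{\cG\boxtimes\cH}$, while by Definitions \ref{cartesion_prod_def} and \ref{def:strongProduct} the algebras agree, $\cM_{\cG\square\cH}=\cM_{\cG\boxtimes\cH}=\cM_\cG\bar{\otimes}\cM_\cH$, acting on the same Hilbert space $H_\cG\bar{\otimes}H_\cH$.

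The next step is to invoke the subgraph monotonicity of $\chi_{b,t}$ (specialized to $b=1$, which reduces to $\chi_t$ by part (1) of Proposition \ref{prop:PropsofQBFold}). Applied to the inclusion above this yields
\[
\chi_t(\cG\square\cH)\leq\chi_t(\cG\boxtimes\cH),
\]
since any family of projections $\{P_a\}\subseteq\cM_{\cG\boxtimes\cH}\bar{\otimes}\cN$ satisfying $P_a(X\otimes I_\cN)P_a=0$ for every $X\in\cS_{\cG\boxtimes\cH}$ automatically satisfies the same identity for every $X$ in the smaller operator space $\cS_{\cG\square\cH}$, so the coloring transfers.

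Finally, chaining this with Theorem \ref{thm:BoundCartesianProduct} gives
\[
\max\{\chi_t(\cG),\chi_t(\cH)\}\leq\chi_t(\cG\square\cH)\leq\chi_t(\cG\boxtimes\cH),
\]
which is the desired bound for every $t\in\{loc,q,qa,qc\}$. There is no real obstacle here: the whole argument rests on the trivial set-theoretic inclusion of operator spaces and on the already-established monotonicity and Sabidussi-type result; no new construction of projections, traces, or Kraus operators is required.
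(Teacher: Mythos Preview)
Your proposal is correct and follows essentially the same route as the paper: both arguments observe the inclusion $\cS_{\cG\square\cH}\subseteq\cS_{\cG\boxtimes\cH}$ over the common algebra $\cM_\cG\bar\otimes\cM_\cH$ and then feed this into Theorem~\ref{thm:BoundCartesianProduct}. The only cosmetic difference is that the paper phrases the monotonicity step as ``the identity map is a local homomorphism $\cG\square\cH\to_{loc}\cG\boxtimes\cH$'' and invokes Corollary~\ref{cor:q_color_transitive}, whereas you invoke the subgraph monotonicity of $\chi_t$ directly; these are the same observation in different clothing.
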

\begin{proof}
    Using Theorem \ref{thm:BoundCartesianProduct}  and Corollary \ref{cor:q_color_transitive} we need only show that $\cG\square \cH \to_{loc} \cG\boxtimes \cH$. However, considering the Definitions \ref{cartesion_prod_def} and \ref{def:strongProduct} it is clear that the identity map defines such a local quantum graph homomorphism (suppressing $\otimes\C$ as it may be neglected).
\end{proof}

The upper bound requires slightly more work.

\begin{thm}
For quantum graphs $\mathcal{G} = (\cS_\cG ,\cM_\cG , \cB(H_\cG ))$  and $\mathcal{H} = (\cS_\cH, \cM_\cH, \cB(H_\cH))$, we have $$\chi_t(\mathcal{G}\boxtimes\mathcal{H}) \leq \chi_t(\mathcal{G})\chi_t(\mathcal{H})$$
$\forall t\in \{loc, q, qa, qc\}.$
\label{thm:strong_upper_bound}
\end{thm}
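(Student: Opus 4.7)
The plan is to form the product coloring in the obvious way: take minimal $t$-colorings $\{P_a^\cG\}_{a=1}^{c}\subset \cM_\cG\bar\otimes\cN_\cG$ and $\{P_b^\cH\}_{b=1}^{d}\subset \cM_\cH\bar\otimes\cN_\cH$ of $\cG$ and $\cH$, with $c=\chi_t(\cG)$ and $d=\chi_t(\cH)$, then set $\cN=\cN_\cG\bar\otimes\cN_\cH$ and define, for each pair $(a,b)\in[c]\times[d]$,
\[
P_{(a,b)} := P_a^\cG \odot P_b^\cH \in \cM_\cG\bar\otimes\cM_\cH\bar\otimes\cN_\cG\bar\otimes\cN_\cH = \cM_\cK\bar\otimes\cN,
\]
using the permuted tensor product from Notation \ref{notaiton:Permulted}. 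Since $\cN_\cG$ and $\cN_\cH$ are both of type $t$, so is $\cN$ (trivially for $t=loc$ and $t=q$, using stability of $\cR^\omega$-embeddability under tensor products for $t=qa$, and the fact that the tensor product of two finite von Neumann algebras with faithful tracial states is again finite for $t=qc$).

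The verification that $\{P_{(a,b)}\}$ defines a $t$-coloring with $cd$ colors splits into three easy steps. First, because the $\cG$-factor and $\cH$-factor of $P_{(a,b)}$ commute (they live in disjoint tensor slots after permutation), each $P_{(a,b)}$ is a projection. Second, writing
\[
\sum_{a=1}^{c}\sum_{b=1}^{d} P_a^\cG\odot P_b^\cH \;=\; \Bigl(\sum_{a=1}^{c}P_a^\cG\Bigr)\odot\Bigl(\sum_{b=1}^{d}P_b^\cH\Bigr) \;=\; I_{\cM_\cG\bar\otimes\cN_\cG}\odot I_{\cM_\cH\bar\otimes\cN_\cH} \;=\; I_{\cM_\cK\bar\otimes\cN}
\]
gives the partition of unity. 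Third, for the coloring condition, decompose a generic element of $\cS_\cK = \cS_\cG\otimes\cM_\cH' + \cM_\cG'\otimes\cS_\cH + \cS_\cG\otimes\cS_\cH$ into these three summands and handle each separately: on the first summand $X_1\otimes Y_1$ with $X_1\in\cS_\cG$ and $Y_1\in\cM_\cH'$, the product $P_{(a,b)}(X_1\otimes Y_1\otimes I_\cN)P_{(a,b)}$ factors (after permutation) as the permuted tensor of $P_a^\cG(X_1\otimes I_{\cN_\cG})P_a^\cG$ with $P_b^\cH(Y_1\otimes I_{\cN_\cH})P_b^\cH$, and the first tensor factor vanishes by the $\cG$-coloring condition. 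The second summand vanishes analogously via the $\cH$-coloring condition, and the third summand vanishes on either factor.

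The construction directly produces a $t$-coloring with $c d = \chi_t(\cG)\chi_t(\cH)$ colors of $\cG\boxtimes\cH$, establishing the bound; combined with Corollary \ref{cor:strongLowerBound} this completes the statement displayed in the introduction. The main obstacle is purely notational: keeping track of how $P_a^\cG$ and $P_b^\cH$ embed into $\cM_\cK\bar\otimes\cN$ under the permutation isomorphism $\theta$, and verifying that the apparent factorization of $P_{(a,b)}(X\otimes I_\cN)P_{(a,b)}$ into a permuted tensor of expressions in $\cM_\cG\bar\otimes\cN_\cG$ and $\cM_\cH\bar\otimes\cN_\cH$ is legitimate. Once the bookkeeping is in place, each of the three coloring-condition cases reduces to a single application of the corresponding factor's coloring property, and no further estimates are needed.
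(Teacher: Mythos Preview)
Your proposal is correct and follows essentially the same approach as the paper: define $P_{(a,b)} = P_a^\cG \odot P_b^\cH$ via the permuted tensor product, verify these form a PVM in $\cM_\cK\bar\otimes\cN$ with $\cN=\cN_\cG\bar\otimes\cN_\cH$ of the appropriate type, and check the coloring condition by splitting $\cS_{\cG\boxtimes\cH}$ into its three summands. The paper's argument is identical in structure and detail.
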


\begin{proof}
Let $\{P_a^\cG\}_{a=1}^{\chi_t(\cG)}\subseteq \mathcal{M}_\cG \bar\otimes \mathcal{N}_\cG $ and $\{P_b^\cH\}_{b=1}^{\chi_t(\cH)}\subseteq \mathcal{M}_\cH\bar\otimes \mathcal{N}_\cH$ be minimal $t$-colorings of $\cG$ and $\cH$. 
Further, let $\cG\boxtimes \cH = \left(\cS_{\cG\boxtimes \cH}, \cM_{\cG\boxtimes \cH}, \cB(H_{\cG\boxtimes \cH})\right)$.

Define $\{R_{a,b}\}_{a,b=1}^{a=\chi_t(\cG),b=\chi_t(\cH)}$ via $R_{a,b} = P_a^\cG \odot P_b^\cH$ where $\odot$ is the permuted tensor product (see Notation \ref{notaiton:Permulted}), and denote $\cN = \cN_\cG \bar\otimes \cN_\cH$. Note that tensoring von Neuman algebras which are 1-dimensional, finite dimensional, $\mathcal{R}^\omega$ embeddable, or finite results in an algebra with the same quality.

Then  we have 
$$R_{a,b}^2 = \left(P_a^\cG\odot P_b^\cH\right)\left(P_a^\cG \odot P_b^\cH\right) = \left(P_a^\cG\right)^2 \odot \left(P_b^\cH\right)^2 = P_a^\cG \odot P_b^\cH = R_{a,b}$$
and 
$$R_{a,b}^* = \left(P_a^\cG\odot P_b^\cH\right)^* = \left(P_a^\cG\right)^* \odot \left(P_b^\cH\right)^* = P_a^\cG\odot P_b^\cH = R_{a,b},$$
so $R_{a,b}$ is a projection.

Additionally, we have $$\sum_{a,b}R_{a,b} = I_{\cM_\cG\bar\otimes \cN_\cG} \odot I_{\cM_\cH\bar\otimes \cN_\cH} = I_{(\mathcal{M}_\cG \bar\otimes \mathcal{M}_\cH)\bar\otimes \mathcal{N}}.$$

Finally, noting that the operator space for $\cG\boxtimes\cH$ is $\cS_{G\boxtimes H} = \cS_\cG \otimes \cM'_\cH + \cM_\cG '\otimes \cS_\cH + \cS_\cG \otimes\cS_\cH$, we get that $\forall X\in \cS_{G\boxtimes H}$ we have $X = X_\cG \otimes D_\cH + D_\cG \otimes X_\cH + Y_\cG \otimes Y_\cH$ for some $X_\cG , Y_\cG \in \cS_\cG ,\, X_\cH, Y_\cH\in \cS_\cH,\, D_\cG \in \cM_\cG '$ and $D_\cH \in \cM_\cH'$. Thus, 
\begin{align*}R_{a,b}(X\otimes I_{\mathcal{N}})R_{a,b} = R_{a,b}((X_\cG \otimes D_\cH)\otimes I_{\mathcal{N}})R_{a,b} + R_{a,b}\left((D_\cG \otimes X_\cH)\otimes I_{\mathcal{N}}\right)R_{a,b} + R_{a,b}((Y_\cG \otimes Y_\cH)\otimes I_\cN)R_{a,b} = 0,\end{align*}
because $P_a^\cG(X_\cG \otimes I_{\mathcal{N}_\cG })P_a^\cG = 0$, $P_a^\cG(Y_\cG \otimes I_{\cN_\cG })P_a^\cG = 0$, and 
$P_b^\cH(X_\cH\otimes I_{\mathcal{N}_\cH})P_b^\cH = 0.$

Together, this gives us that $\{R_{a,b}\}$ is a $t$ coloring of $\cG\boxtimes\cH$ using $\chi_t(\cG)\chi_t(\cH)$ colors, so we must have $\chi_t(\cG\boxtimes\cH) \leq \chi_t(\cG)\chi_t(\cH).$
\end{proof}

\begin{remark}
We easily see from \cite[Proposition 6.4]{GanesanHarrisQuantumToClassical} that Theorem \ref{thm:strong_upper_bound} gives us $\chi_t(\cG)\chi_t(\cH)$ is an upper bound for the Cartesian product (and categorical product, but we already have a stronger bound in this case). 
\label{remark:productUpperBound}
\end{remark}

We now comment upon the cases where these bounds for the strong product are sharp.

\begin{prop}
    For quantum graphs $\cG = (\cS_\cG , \cM_\cG , \cB(H_\cG ))$ and $\cH = (\cS_\cH, \cM_\cH, \cB(H_\cH))$, the upper bound found in Theorem \ref{thm:strong_upper_bound} is tight when $\cG$ and $\cH$ are both complete graphs. Furthermore, the lower bound in Corollary \ref{cor:strongLowerBound} is tight when $\cG$ (resp. $\cH$) have $\cS_\cG $ (resp. $\cS_\cH$) equal to zero.
\end{prop}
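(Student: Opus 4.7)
The plan is to address the two tightness claims independently, drawing on Theorem \ref{thm:completeGraph} and Corollary \ref{cor:strongLowerBound}.

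For the upper-bound claim, suppose $\cG = (M_{n_\cG} \cap (\cM_\cG')^\perp, \cM_\cG, M_{n_\cG})$ and $\cH = (M_{n_\cH} \cap (\cM_\cH')^\perp, \cM_\cH, M_{n_\cH})$ are both complete. My first step would be to show that the strong product $\cK = \cG \boxtimes \cH$ is itself a complete quantum graph, i.e.\ that $\cS_\cK = M_{n_\cG n_\cH} \cap (\cM_\cK')^\perp$. Using the trace-orthogonal decompositions $M_{n_\cG} = \cM_\cG' \oplus \cS_\cG$ and $M_{n_\cH} = \cM_\cH' \oplus \cS_\cH$, tensoring yields
$$M_{n_\cG} \otimes M_{n_\cH} = (\cM_\cG' \otimes \cM_\cH') \oplus (\cM_\cG' \otimes \cS_\cH) \oplus (\cS_\cG \otimes \cM_\cH') \oplus (\cS_\cG \otimes \cS_\cH).$$
Since $\cM_\cK' = \cM_\cG' \bar\otimes \cM_\cH'$ by the commutant theorem for von Neumann tensor products, the orthogonal complement of $\cM_\cK'$ inside $M_{n_\cG n_\cH}$ is precisely the sum of the last three summands, which is exactly $\cS_\cK$ as given by Definition \ref{def:strongProduct}. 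Once $\cK$ is known to be complete, Theorem \ref{thm:completeGraph} gives $\chi_t(\cK) = \dim \cM_\cK = \dim \cM_\cG \cdot \dim \cM_\cH = \chi_t(\cG)\chi_t(\cH)$, which matches the upper bound of Theorem \ref{thm:strong_upper_bound}.

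For the lower-bound claim, suppose without loss of generality that $\cS_\cG = 0$. Then the single projection $P_1 = I_{\cM_\cG}$ trivially satisfies all the requirements of a $t$-$1$-coloring: the adjacency condition $P_1(X \otimes I)P_1 = 0$ is vacuous for $X \in \cS_\cG = \{0\}$, and $\sum_{a=1}^1 P_a = I$ is immediate. Hence $\chi_t(\cG) = 1$, and feeding this into Theorem \ref{thm:strong_upper_bound} yields $\chi_t(\cG \boxtimes \cH) \leq \chi_t(\cH) = \max\{\chi_t(\cG), \chi_t(\cH)\}$. Combined with Corollary \ref{cor:strongLowerBound}, this forces equality with the lower bound. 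The case $\cS_\cH = 0$ is symmetric via the flip isomorphism.

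The most delicate point is verifying that the orthogonal decomposition $M_n = \cM' \oplus \cS$ really does pass through the tensor product cleanly so that the sum-of-subspaces definition of $\cS_\cK$ coincides with $(\cM_\cK')^\perp$ inside $M_{n_\cG n_\cH}$. However, this reduces to standard facts about tensor products of orthogonal direct sum decompositions together with the commutant identity $(\cM_\cG \bar\otimes \cM_\cH)' = \cM_\cG' \bar\otimes \cM_\cH'$ already invoked in the proof of Proposition \ref{cartesian_get_graph_proof}, so no genuinely new obstacle appears.
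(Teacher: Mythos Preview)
Your proposal is correct, and for the first claim it follows essentially the same line as the paper: both arguments show that the strong product of complete quantum graphs is again complete and then invoke the formula $\chi_t = \dim\cM$ for complete graphs. Your derivation via the trace-orthogonal tensor decomposition $M_{n_\cG}\otimes M_{n_\cH} = (\cM_\cG'\otimes\cM_\cH')\oplus\cS_\cK$ is more explicit than the paper's brief assertion. One small point you gloss over: Theorem \ref{thm:completeGraph} (and the underlying result from \cite{GanesanHarrisQuantumToClassical}) is stated only for $t\in\{q,qa,qc\}$. The paper handles $t=loc$ separately, observing that if either $\cM_\cG$ or $\cM_\cH$ is non-abelian then the relevant local chromatic numbers are infinite and tightness is vacuous, while if both are abelian the situation reduces to the classical case.

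For the second claim you take a genuinely different and more economical route. The paper proves $\chi_t(\cG\boxtimes\cH)=\chi_t(\cG)$ (assuming $\cS_\cH=0$) by exhibiting local quantum graph homomorphisms in both directions: the inclusion $\cG\to_{loc}\cG\boxtimes\cH$ from the proof of Corollary \ref{cor:strongLowerBound}, and the partial trace $\cG\boxtimes\cH\to_{loc}\cG$ from the proof of Theorem \ref{thm:BoundCategoricalProduct}, which lands in the right place precisely because $\cS_\cH=0$. You instead note that $\cS_\cG=0$ forces $\chi_t(\cG)=1$ via the trivial one-projection coloring, and then feed this directly into the upper bound of Theorem \ref{thm:strong_upper_bound} to squeeze against Corollary \ref{cor:strongLowerBound}. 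Your argument avoids constructing any new maps and is cleaner; the paper's approach, on the other hand, establishes the slightly stronger fact that $\cG$ and $\cG\boxtimes\cH$ are mutually $loc$-homomorphic in this degenerate case.
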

\begin{proof}
    Let $\cG\boxtimes \cH = \left(\cS_{\cG\boxtimes \cH}, \cM_{\cG\boxtimes \cH}, \cB(H_{\cG\boxtimes \cH})\right)$.
    
    If $\cG$ and $\cH$ are complete quantum graphs then $\cS_\cG  = M_n$ and $\cS_\cH = M_m$ where $n= \dim H_\cG $ and $m = \dim H_\cH$. Then, unless both $\cM_\cG $ and $\cM_\cH$ are abelian, both at least one of the initial and the final product local chromatic numbers will be infinite. If both quantum graphs are abelian then we are in the fully classical setting when $t=loc$ so the theorem is already known.
    We now assume that $t\neq loc$.  Then, using Definition \ref{def:strongProduct}, we see that $\cS_{\cG\boxtimes \cH} = M_{nm}$ so we have $\cG\boxtimes \cH$ is a complete quantum graph as well. Thus, we have $\chi_t(\cG) = n, \chi_t(\cH) = m$ and $\chi_t(\cG\boxtimes\cH) = mn$ via \cite[Theorem 6.9]{GanesanHarrisQuantumToClassical}. Hence, the first part of the Proposition is shown.

    To show the second half, we may assume without loss of generality that $\cS_\cH = 0$ and not put any restriction on $\cS_\cG $. We wish to show that $\chi_t(\cG\boxtimes\cH) = \chi_t(\cG)$. We accomplish this by noting that $\cG\to_{loc} \cG\boxtimes\cH$ and $\cG\boxtimes\cH\to_{loc}\cG$ with the first quantum graph homomorphism accomplished using the CPTP map that is the composition of the maps in Theorem \ref{thm:BoundCartesianProduct} and Corollary \ref{cor:strongLowerBound}, and the second quantum graph homomorphism accomplished via the partial trace defined in Theorem \ref{thm:BoundCategoricalProduct} that takes the operator spaces to the correct place because $\cS_\cH = 0$.
\end{proof}

Finally, corollary \ref{cor:productLexBound} along with Theorem \ref{thm:strong_upper_bound} and Remark \ref{remark:productUpperBound} gives the following result.
\begin{cor} 
Let $t\in \{loc, q, qa, qc\}$.
    For quantum graphs $\mathcal{G} = (\cS_\cG , \cM_\cG , \cB(H_\cG ))$ and $\mathcal{H} = (\cS_\cH, \cM_\cH, \cB(H_\cH))$ we have 
$$\chi_t(\mathcal{G}[\mathcal{H}]),\chi_t(\mathcal{G}\Box \mathcal{H}), \chi_t(\mathcal{G}\times \mathcal{H}),\chi_t(\mathcal{G}\boxtimes \mathcal{H}) \leq \chi_t(\cG)\chi_t(\cH).$$ 
\end{cor}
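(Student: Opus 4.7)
The plan is to observe that this corollary is essentially a bookkeeping statement consolidating the upper bounds already established for each of the four products. The only genuine content is noting that the bound for the strong product transfers to the Cartesian and categorical products via an inclusion-of-operator-spaces argument (this is exactly the content of Remark \ref{remark:productUpperBound}). So I would handle the four cases in one paragraph each (or really, one compact paragraph).

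First, for the lexicographic product, I would quote Corollary \ref{cor:productLexBound} directly: it already gives $\chi_t(\mathcal{G}[\mathcal{H}]) \leq \chi_{b,t}(\mathcal{G}) \leq \chi_t(\mathcal{G})\chi_t(\mathcal{H})$ with $b = \chi_t(\mathcal{H})$. Second, for the strong product, I would quote Theorem \ref{thm:strong_upper_bound}, which gives exactly $\chi_t(\mathcal{G}\boxtimes\mathcal{H}) \leq \chi_t(\mathcal{G})\chi_t(\mathcal{H})$ by constructing the projections $R_{a,b} = P_a^{\mathcal{G}}\odot P_b^{\mathcal{H}}$.

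For the Cartesian and categorical products, I would invoke the key observation from Definitions \ref{cartesion_prod_def}, \ref{categor_prod_def}, and \ref{def:strongProduct}: both $\cS_{\mathcal{G}\square\mathcal{H}} = \cS_{\mathcal{G}}\otimes \cM_{\mathcal{H}}' + \cM_{\mathcal{G}}'\otimes \cS_{\mathcal{H}}$ and $\cS_{\mathcal{G}\times\mathcal{H}} = \cS_{\mathcal{G}}\otimes \cS_{\mathcal{H}}$ are operator subspaces of $\cS_{\mathcal{G}\boxtimes\mathcal{H}}$, while the surrounding von Neumann algebra $\cM_{\mathcal{K}} = \cM_{\mathcal{G}}\bar\otimes\cM_{\mathcal{H}}$ is identical in all three constructions. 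Since any set of projections satisfying the coloring condition with respect to the larger operator space automatically satisfies it for the smaller one, every $t$-coloring of $\mathcal{G}\boxtimes\mathcal{H}$ restricts to a $t$-coloring of $\mathcal{G}\square\mathcal{H}$ and of $\mathcal{G}\times\mathcal{H}$. Therefore
\[
\chi_t(\mathcal{G}\square\mathcal{H}) \leq \chi_t(\mathcal{G}\boxtimes\mathcal{H}) \leq \chi_t(\mathcal{G})\chi_t(\mathcal{H}),
\]
and likewise for the categorical product. (For the categorical product one could alternatively apply the stronger bound $\chi_t(\mathcal{G}\times\mathcal{H}) \leq \min\{\chi_t(\mathcal{G}), \chi_t(\mathcal{H})\}$ from Theorem \ref{thm:BoundCategoricalProduct}, which of course dominates the product bound.)

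There is no serious obstacle here; the entire proof is a two- or three-sentence citation exercise. The only thing to be careful about is making sure the inclusion-of-subgraph monotonicity of $\chi_t$ is invoked cleanly — this is the same principle used in the earlier proposition showing $\chi_{b,t}(\cS,\cM,\cB(H)) \leq \chi_{b,t}(\cT,\cM,\cB(H))$ whenever $\cS \subseteq \cT$, specialized to $b = 1$.
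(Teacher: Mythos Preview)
Your proposal is correct and matches the paper's approach exactly: the paper states this corollary as an immediate consequence of Corollary~\ref{cor:productLexBound}, Theorem~\ref{thm:strong_upper_bound}, and Remark~\ref{remark:productUpperBound}, which is precisely the chain of citations you give. Your additional remark that the categorical case also follows from the stronger Hedetniemi bound (Theorem~\ref{thm:BoundCategoricalProduct}) mirrors the parenthetical in Remark~\ref{remark:productUpperBound}.
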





\bibliographystyle{amsalpha}
\bibliography{references.bib}


\end{document}